\tikzstyle{vertex}=[circle, draw, inner sep=2pt, minimum size=6pt]
\providecommand{\keywords}[1]{
  \small	
  \textbf{\textit{Keywords---}} #1
}
\def\T{^\top}
\newtheorem{theorem}{Theorem}[section]
\newtheorem{lemma}{Lemma}[section]
\newtheorem{proposition}{Proposition}[section]
\newtheorem{remark}{Remark}
\newtheorem{example}{Example}[section]
\newcommand{\nc}{\newcommand}
\nc{\cA}{{\cal A}}
\nc{\cB}{{\cal B}}
\nc{\cC}{{\cal C}}
\nc{\cD}{{\cal D}}
\nc{\cE}{{\cal E}}
\nc{\cG}{{\cal G}}
\nc{\cF}{{\cal F}}
\nc{\cH}{{\cal H}}
\nc{\cI}{{\cal I}}
\nc{\cK}{{\cal K}}
\nc{\cL}{{\cal L}}
\nc{\cM}{{\cal M}}
\nc{\cN}{{\cal N}}
\nc{\cO}{{\cal O}}
\nc{\cP}{{\cal P}}
\nc{\cQ}{{\cal Q}}
\nc{\cR}{{\cal R}}
\nc{\cS}{{\cal S}}
\nc{\cT}{{\cal T}}
\nc{\cV}{{\cal V}}
\nc{\tx}{{\tilde x}}
\nc{\la}{{\langle}}
\nc{\ra}{{\rangle}}
\nc{\ts}{\textsuperscript}
\nc{\cp}[1]{\mathcal{CP}_{#1}} 
\def\A{\mathbb{A}}
\def\B{\mathbb{B}}
\def\R{\mathbb{R}}
\def\diag{\mbox{diag }}
\def\T{^\top}
\def\du#1#2{\langle {#1} , {#2} \rangle}
\def\Ab{{\mathsf A}}
\def\Bb{{\mathsf B}}
\def\Cb{{\mathsf C}}
\def\Eb{{\mathsf E}}
\def\Fb{{\mathsf F}}
\def\Hb{{\mathsf H}}
\def\Ib{{\mathsf I}}
\def\Mb{{\mathsf M}}
\def\Nb{{\mathsf N}}
\def\Ob{{\mathsf O}}
\def\Pb{{\mathsf P}}
\def\Qb{{\mathsf Q}}
\def\Rb{{\mathsf R}}
\def\Sb{{\mathsf S}}
\def\Tb{{\mathsf T}}
\def\Ub{{\mathsf U}}
\def\Wb{{\mathsf W}}
\def\Xb{{\mathsf X}}
\def\Yb{{\mathsf Y}}
\def\Zb{{\mathsf Z}}
\def\x{\mathbf x}
\def\y{\mathbf y}
\def\v{\mathbf v}
\def\e{\mathbf e}
\def\u{\mathbf u}
\def\b{\mathbf b}
\def\c{\mathbf c}
\def\d{\mathbf d}
\def\a{{\mathbf a}}
\def\x{{\mathbf x}}
\def\y{{\mathbf y}}
\def\u{{\mathbf u}}
\def\v{{\mathbf v}}
\def\o{{\mathbf o}}
\date{\today}
\title{Tighter yet more tractable relaxations\\ 
 and nontrivial instance generation\\ for sparse  standard quadratic optimization}
\author{Immanuel Bomze\thanks{VCOR and Research Network Data Science, University of Vienna, Oskar-Morgenstern-Platz 1,
1090 Wien, Austria. ORCID ID: 0000-0002-6288-9226 E-mail: {\tt immanuel.bomze@univie.ac.at}} \and Bo Peng\thanks{VGSCO, University of Vienna, Oskar-Morgenstern-Platz 1,
1090 Wien, Austria. ORCID ID: 0000-0002-2650-0295 E-mail: {\tt bo.peng@univie.ac.at}} \and Yuzhou Qiu\thanks{School of Mathematics, Peter Guthrie Tait Road, The University of Edinburgh, Edinburgh, EH9 3FD, United Kingdom. E-mail: \tt{y.qiu-16@sms.ed.ac.uk}} \and E. Alper Y{\i}ld{\i}r{\i}m\thanks{School of Mathematics, Peter Guthrie Tait Road, The University of Edinburgh, Edinburgh, EH9 3FD, United Kingdom. ORCID ID: 0000-0003-4141-3189 E-mail: \tt{E.A.Yildirim@ed.ac.uk}}}
\date{\today}
\begin{document}

\maketitle

\begin{abstract}

The Standard Quadratic optimization Problem (StQP), arguably the simplest among all classes of NP-hard optimization problems,  consists of extremizing a quadratic form (the simplest nonlinear polynomial) over the standard simplex (the simplest polytope/compact feasible set). As a problem class, StQPs may be nonconvex with an exponential number of inefficient local solutions. StQPs arise in a multitude of applications, among them mathematical finance, machine learning (clustering), and modeling in biosciences (e.g., selection and ecology). 
This paper deals with such StQPs under an additional sparsity or cardinality constraint, which, even for convex objectives, renders NP-hard problems. One motivation to study StQPs under such sparsity restrictions is the high-dimensional portfolio selection problem with too many assets to handle, in particular, in the presence of transaction costs. Here, relying on modern conic optimization techniques, we present  
tractable convex relaxations for 
this relevant but difficult problem. We propose novel equivalent reformulations of these relaxations with significant dimensional reduction, which is essential for the tractability of these relaxations when the problem size grows. 
Moreover, we propose an instance generation procedure which systematically avoids too easy instances. Our extensive computational results illustrate the high quality of the relaxation bounds in 
a significant number of instances. Furthermore, in contrast with exact mixed-integer quadratic programming models, the solution time of the relaxations is very robust to the choices of the problem parameters. In particular, the reduced formulations achieve significant improvements in terms of the solution time over their counterparts.

\end{abstract}

\keywords{Quadratic optimization, cardinality constraint, mixed-integer quadratic optimization, copositive optimization, doubly nonnegative relaxation}

{\bf AMS Subject Classification:} 90C11, 90C20, 90C22

\section{The problem --- introduction and some motivation} \label{intro}

In many applications, one is interested in sparse solutions to optimization problems. In this paper, we consider Standard Quadratic optimization Problems (StQPs) under a hard sparsity constraint. 

An StQP consists of minimizing a (not necessarily convex) quadratic form over the standard simplex:
\begin{equation}
\tag{StQP}\label{StQP} \ell_n(\Qb) := \min\limits_{\x \in \R^n} \left\{\x\T  \Qb \x: \x \in F_n\right\}\, , 
\end{equation}
where 
\begin{equation} \label{def_F_n}
F_n := \left\{\x \in \R^n_+ : \e\T \x = 1  \right\}
\end{equation}
is the standard simplex, the simplest polytope in the $n$-dimensional Euclidean space $\R^n$. Here, $\e \in \R^{n}$ denotes the vector of all ones.

Despite its simplicity, this problem class serves to model manifold real-world applications, ranging from the analysis of social networks (community detection via dominant-set-clustering)~\cite{Bomz20a,motzkin_straus_1965} through biology, game theory to economy and finance~\cite{Bomz02a,markowitz1952portfolio}, to cite just a few. For a survey on mixed-integer convex quadratic optimization approaches to portfolio selection, see, e.g.,~\cite{MDA2019}.

One motivation to introduce sparsity constraints comes from high-dimensional portfolio selection. Recently, the significant benefits of controlling the cardinality of a portfolio have been widely recognized in the literature, e.g., by~\cite{du2022high} who observed that maintaining a well-diversified portfolio on S$\&$P 500 datasets only required approximately $10 \sim 30$ assets. By fixing the number of holding stocks in advance, they bypass estimation of the covariance matrix $\Sigma$, focusing solely on the cardinality constraints. Even earlier,~\cite{hautsch2019large} already found empirically that commonly employed shrinkage methods significantly underperform in constructing portfolios. This effect, namely that using a mathematically well-behaved surrogate instead of the true sparsity term (which is discontinuous and thus non-convex) can be grossly misleading, is by now widely known in other application domains, e.g., in signal reconstruction.

Therefore, we propose to study the StQP with an exact, hard sparsity constraint, with the aim to develop tight yet tractable relaxations. Such a problem can be expressed as follows:
\begin{equation*}
\tag{StQP($\rho$)}\label{sStQP} \ell_\rho(\Qb) := \min\limits_{\x \in \R^n} \left\{\x\T  \Qb \x: \x \in F_\rho\right\}, 
\end{equation*}
where 
\begin{equation} \label{def_F_rho}
F_\rho := \left\{\x \in F_n : \|\x\|_0 \leq \rho \right\} = \left\{\x \in \R^n_+ : \e\T \x = 1,  \quad \|\x\|_0 \leq \rho\right\}\, .
\end{equation}
Here, $\|\x\|_0$ denotes the number of nonzero components of a vector $\x$ and $\rho \in [1\! : \!n] :=\{1,\ldots,n\}$ is the sparsity parameter. 

Before we start our discussion, let us stress that the primary aim of this study is to provide rigorous and tight bounds on an extremely difficult problem. In practice, this will most likely be complemented by fast, high-performance heuristics. Combining both methods will, as we hope, provide practical solutions of reasonable, guaranteed quality.

As an aside, let us mention that we address here any StQP of any finite size $n$, not just the average case as $n\to\infty$. As shown in~\cite{chen2015new,chen2013sparse}, asymptotically (for large $n$) with high probability, an StQP instance already has a global solution in $F_2$, so very sparse. However, it is unclear, in general, for which value of $n$ this effect kicks in (if at all) and what happens in highly structured (e.g., convex) StQPs as in the portfolio selection case, which may be somehow hidden in possibly lower-dimensional regions in the space of all instances; see, e.g.,~\cite{Bomz16a}.

\subsection{Literature review and contributions}

\textcolor{black}{
The sparse StQP problem arises in a wide range of applications.} \textcolor{black}{A prominent class is comprised of} \textcolor{black}{portfolio selection problems with an upper bound on the number of assets~\cite{bienstock1996computational,perold1984large,Xuetalijoc.2022.0344}.} \textcolor{black}{The relevance of this problem is well illustrated by the fact that the most recent release of {\tt MATLAB}~\cite{matlabOptimizationToolboxR2024a} now offers an iterative mixed-integer linear programming (MILP) algorithmic framework to solve the mixed-integer quadratic programming (MIQP) formulation of (convex) portfolio selection problems with lower and upper bounds on the cardinality, as well as the possibility of using semi-continuous variables (taking either zero or positive values above a certain fixed threshold). However, a quick test of this toolbox reveals that even for $n=30$ and extreme sparsity $\rho=1$, the solution time was around 40 seconds (note that the solution is trivial for $\rho=1$ by extracting the smallest diagonal element of the data matrix $\Qb$). For $\rho=2$, no solution was found within two hours. Clearly, solution times are dominated by all models discussed here, cf.~Figure~\ref{fig3a} below.}

\textcolor{black}{Further applications include} \textcolor{black}{selecting subsets of regression variables in computational algorithms \cite{miller1984selection}, and sparsity in the information rate of compressive sampling \cite{candes2008introduction}.
}

\textcolor{black}{
There are various studies on the sparse StQP problem. Those studies can be separated into two main categories. The first group of studies indirectly handles the sparsity constraint by regularization via the $\ell_1$-norm or the $\ell_p$-pseudonorms with $p \in (0,1)$, see for instance~\cite{brodie2009sparse,chen2013sparse}. The second category of studies considers the cardinality constraints in the sparse StQP problem directly with different approaches. A semidefinite programming based reformulation of the sparse StQP problem that helps generating perspective cuts has been studied in~\cite{frangioni2007sdp,zheng2014improving}; see also~\cite{Han2022}. In~\cite{burdakov2016mathematical}, the authors study a nonconvex reformulation of the sparse StQP problem with continuous variables and apply a regularization method for solving the reformulation. A semidefinite programming based heuristic method that can give an upper bound on the sparse StQP problem is introduced in~\cite{braun2005semidefinite}. A tight semidefinite relaxation of the sparse StQP problem with strong computational behavior under the assumption of convexity of the objective function is studied in~\cite{wiegele2021tight}. Based on the mixed-integer formulation of the sparse StQP problem, there are also many algorithms using discrete optimization techniques to either solve the problem to optimality, or to find an approximate solution (see, for instance,~\cite{bertsimas2009algorithm, bienstock1996computational, di2012concave, murray2012local, ruiz2010optimization, sun2013recent, zheng2014successive}).
}

This work builds upon the recent work of the same set of authors on the sparse StQP problem~\cite{BPQY23}, where the focus was on classical convex relaxations given by the reformulation-linearization technique (RLT), Shor relaxation (SDP), and their combination (SDP-RLT) arising from \eqref{P1}, one of the mixed-integer quadratic formulations considered this paper. They established several structural properties of these relaxations in relation to the corresponding relaxations of StQPs without any sparsity constraints, and utilized these relations to obtain several results about the quality of the lower bounds arising from different relaxations. In contrast, in this paper, we consider convex relaxations arising from exact copositive reformulations of two different MIQP formulations, which are provably tighter than each of the aforementioned classical relaxations. Furthermore, our focus is on finding tight but tractable relaxations that scale well with the problem dimension. 

Our contributions are as follows:

\begin{itemize}
    \item[(i)] We present exact copositive reformulations of two mixed integer quadratic optimization models \eqref{P1} and \eqref{P2}.

    \item[(ii)] We propose novel equivalent reformulations of doubly nonnegative relaxations of each of the two copositive reformulations in significantly smaller dimensions.

    \item[(iii)] We establish that one of the two relaxations is provably tighter than the other one.

    \item[(iv)] We propose an instance generation procedure that avoids instances of \eqref{sStQP} for which the cardinality constraint is redundant.
\end{itemize}

\subsection{Notation and organisation of the paper}

We use $\R^n$, $\R^n_+$, $\R^{m \times n}$, and $\cS^n$ to denote the $n$-dimensional Euclidean space, nonnegative orthant, the space of $m \times n$ real matrices, and the space of $n \times n$ real symmetric matrices, respectively. Throughout the paper, vectors and matrices are denoted by bold lowercase and bold uppercase letters, respectively (e.g.~$\x$ and $\Xb$). In particular, we reserve $\e$~(resp., $\Eb$) and $\o$~(resp., $\Ob$) for the vector (resp. matrix) of all ones and all zeroes of appropriate dimensions, respectively. The dimension should be clear from the context. Scalars are denoted by regular lowercase Roman or Greek letters. We employ subscripts to indicate a specific element of vectors or matrices. For instance, we denote the $i$th component of the vector $\x$ by $x_i$, and the $(i,j)$-entry of the matrix $\Xb$ by $X_{ij}$. For $\Xb \in \R^{n \times n}$ and two index sets $\mathbb{A}\subset\{1,...,n\}$, $\mathbb{B}\subset\{1,...,n\}$, $\Xb_{\mathbb{A}\mathbb{B}}$ is the  submatrix of $\Xb$ given by the rows and columns indexed by $\mathbb{A}$ and $\mathbb{B}$, respectively. Similarly, $\x_\mathbb{A}$ denotes the subvector of $\x \in \R^n$ given by the components indexed by $\mathbb{A}$. For $\Xb \in \R^{n \times n}$, we denote the column vector formed by the diagonal entries of $\Xb$ by $\diag(\Xb)$. Inequalities on vectors and matrices are understood to be componentwise scalar inequalities on each of the corresponding components. For $\Xb \in \cS^n$, we use $\Xb \succeq \Ob$ (resp., $\Xb \succ \Ob$) to denote that $\Xb$ is positive semidefinite (resp., positive definite). The trace inner product of $\Xb \in \R^{m \times n}$ and $\Yb \in \R^{m \times n}$ is denoted by $\langle \Xb, \Yb \rangle = \textrm{trace}\left(\Xb\T \Yb \right) = \sum\limits_{i=1}^m \sum\limits_{j=1}^n X_{ij} Y_{ij}$.

We define the following closed, convex cones in $\cS^n$:
\begin{eqnarray} \label{def_cones}
\label{def_CP}
{\cal CP}_n & = & \left\{\Mb \in \cS^n: \Mb = \Ab \Ab\T \quad \textrm{for some}~ \Ab \in \R^{n \times k}~\textrm{such that}~ \Ab \geq \Ob \right\}\, , \\
\label{def_DNN}
{\cal D}_n & = &  \left\{\Mb \in \cS^n: \Mb \succeq \Ob,~\Mb \geq \Ob \right\}\, , \\
\label{def_SPN}
{\cal SPN}_n & = &  \left\{\Mb \in \cS^n: \Mb = \Pb + \Nb, \textrm{ for some}~\Pb \succeq \Ob,~\Nb \geq \Ob \right\}\, , \\
\label{def_COP}
{\cal COP}_n & = & \left\{\Mb \in \cS^n: \u\T \Mb \u \geq 0, \quad \textrm{for all}~ \u \in \R^n_+ \right\}\, ,
\end{eqnarray}
i.e., ${\cal CP}_n$ is the cone of completely positive matrices, ${\cal D}_n$ is the cone of doubly nonnegative matrices, ${\cal SPN}^n$ is the cone of SPN matrices (those which can be decomposed into the sum of a positive semidefinite (PSD) and a componentwise nonnegative matrix), and ${\cal COP}^n$ is the cone of copositive matrices. It is well-known and easy to check that
\begin{equation} \label{cone_inclusion}
{\cal CP}_n \subseteq {\cal D}_n \subseteq {\cal SPN}_n \subseteq {\cal COP}_n\, .    
\end{equation}
Furthermore, ${\cal CP}_n = {\cal D}_n$ and ${\cal SPN}_n = {\cal COP}_n$ if and only if $n \leq 4$~(see \cite{ref:Diananda}).

The boundary of ${\cal COP}_n$, denoted by $\mathbf{bd} ~{\cal COP}_n$, is given by
\begin{equation} \label{cop_bd}
\mathbf{bd} ~{\cal COP}_n = \left\{\Mb \in {\cal COP}_n: \exists~\u \in F_n \textrm{ s.t. } \u\T \Mb \u = 0 \right\},
\end{equation} 
where $F_n$ is given by \eqref{def_F_n}.

For $\Mb \in \mathbf{bd} ~{\cal COP}_n$, the set of zeroes of $\Mb$ is given by
\begin{equation} \label{def_M_zeros}
\mathbf{V}^\Mb = \left\{\u \in F_n: \u\T \Mb \u = 0\right\}.
\end{equation}

The paper is organized as follows: Section~\ref{ExactForm} presents two exact conic reformulations, one by using binary variables directly (Section~\ref{ExactForm-P1}) and the other involving complementarity constraints (Section~\ref{ExactForm-P2}). In Section~\ref{SecConvRelax}, we discuss their doubly nonnegative relaxations and propose smaller equivalents. Section~\ref{comparDNN} compares the tractable lower bounds generated by them.
Section~\ref{generation} %and Subsection~\ref{CRInst} are 
is devoted to careful generation procedures which systematically avoid too easy instances and provide testbeds of different complexity scale: PSD instances, then SPN instances, and finally COP instances which provably cannot be reduced to an instance in the previously mentioned classes. In Section~\ref{CompRes}, we describe our computational experiment in detail (Section~\ref{CRSetup}), discuss solution times %and their comparison 
in Section~\ref{CRSolTime}, and the quality of the lower bounds achieved in Section~\ref{CRQLB}. We conclude in Section~\ref{concl}.

\section{Two exact  completely positive formulations} \label{ExactForm}

In this section, we present two exact formulations of the sparse StQP problem~\eqref{sStQP} as mixed-binary quadratic optimization problems, a direct one covered by Section~\ref{ExactForm-P1}, whereas Section~\ref{ExactForm-P2} is devoted to a formulation involving a complementarity constraint.

\subsection{Conic formulation of direct MIQP} \label{ExactForm-P1}

By introducing binary variables, the sparse StQP can be reformulated as the following mixed-binary quadratic optimization problem:
\[
\begin{array}{llrrcl}
\tag{P1($\rho$)}\label{P1} & \ell_\rho (\Qb) = &\min\limits_{(\x,\u) \in \R^n \times \R^n} & \x\T  \Qb\x & & \\
 &&\textrm{s.t.}  & \e\T  \x & = &1 \\ 
 && & \e\T  \u & = & \rho \\ 
 && & \x & \leq & \u \\
 && & \u & \in & \{0,1\}^n  \\
 && & \x & \geq & \o\, .
    \end{array}
\]

By introducing the redundant constraints $\u \leq \e$, slack variables $\y \geq \o$ and $\v \geq \o$ so that $\x + \y = \u$ and $\u + \v = \e$, we arrive at the following reformulation of \eqref{P1}:
\[
\begin{array}{llrrcl}
\tag{P1A($\rho$)}\label{P1A} & \ell_\rho (\Qb) = &\min\limits_{(\x,\u,\v,\y) \in \R^n \times \R^n \times \R^n \times \R^n} & \x\T  \Qb\x & & \\
 &&\textrm{s.t.}  & \e\T  \x & = &1 \\ 
 && & \e\T  \u & = & \rho \\ 
 && & \x + \y & = & \u \\
 && & \u + \v & = & \e \\
 && & \u & \in & \{0,1\}^n  \\
 && & \x & \geq & \o \\
 && & \y &  \geq & \o \\
 && & \v & \geq & \o\,.
    \end{array}
\]

By~\cite{Burer09}, \eqref{P1A} admits an equivalent reformulation as the following conic optimization problem:
\[
\begin{array}{lllrcl}
 \tag{CP1A($\rho$)}\label{CP1A}
 &\ell_\rho (\Qb) = & \min\limits_{\Zb \in \cS^{4n + 1}} & \du{\Qb}{{\mathsf Z}^{\x\x}}  & & \\
 && \textrm{s.t.}  & \e\T  \x & = & 1 \\ 
  && & \e\T  \u & = & \rho \\ 
   && & \x + \y & = & \u \\
 && & \u + \v & = & \e \\
 && & \du{\Eb}{{\mathsf Z}^{\x\x}} & = & 1 \\ 
 && & \du{\Eb}{{\mathsf Z}^{\u\u}} & = & \rho^2 \\ 
  && & \diag({\mathsf Z}^{\u\u})  & = & \u\\
 && & \diag({\mathsf Z}^{\x\x}) + \diag({\mathsf Z}^{\y\y}) + \diag({\mathsf Z}^{\u\u}) + 2 \, \left[\diag({\mathsf Z}^{\x\y}) - \, \diag({\mathsf Z}^{\x\u}) - \, \diag({\mathsf Z}^{\u\y}) \right] & = & \o \\
 && & \diag({\mathsf Z}^{\u\u}) + 2 \, \diag({\mathsf Z}^{\u\v}) + \diag({\mathsf Z}^{\v\v}) & = & \e \\
 && & \Zb:=\begin{pmatrix}
    1 & \x\T & \u\T & \v\T & \y\T\\
    \x & {\mathsf Z}^{\x\x} & {\mathsf Z}^{\x\u} & {\mathsf Z}^{\x\v} & {\mathsf Z}^{\x\y} \\
    \u & ({\mathsf Z}^{\x\u})\T & {\mathsf Z}^{\u\u} & {\mathsf Z}^{\u\v} & {\mathsf Z}^{\u\y} \\
    \v & ({\mathsf Z}^{\x\v})\T & ({\mathsf Z}^{\u\v})\T & {\mathsf Z}^{\v\v} & {\mathsf Z}^{\v\y} \\
    \y & ({\mathsf Z}^{\x\y})\T & ({\mathsf Z}^{\u\y})\T & ({\mathsf Z}^{\v\y})\T & {\mathsf Z}^{\y\y}
    \end{pmatrix}& \in & \cp{4n+1}\, .
    \end{array}
\]

By~\cite{Burer09}, \eqref{P1A} and \eqref{CP1A} are equivalent in the following sense: Both of their optimal values are equal to $\ell_\rho (\Qb)$ and for any optimal solution $\Zb \in \cS^{4n + 1}$ of \eqref{CP1A}, $(\x,\u,\y,\v) \in \R^n \times \R^n \times \R^n \times \R^n$ is in the convex hull of the set of optimal solutions of \eqref{P1A}. Therefore, \eqref{CP1A} is an exact convex reformulation of the nonconvex optimization problem \eqref{P1A}.

\subsection{Conic formulation of MIQP with complementarity constraint} \label{ExactForm-P2}

In \eqref{P1}, sparsity constraints are handled by big-M constraints. Alternatively, sparsity can be enforced by dropping the big-M constraints $\x\le \u$ and replacing them with the complementarity constraints $\x_j (1 - \u_j) = 0,~j = 1,\ldots,n$ (see, e.g.,~\cite{burdakov2016mathematical}). Defining $\v = \e - \u \in \{0,1\}^n$, we obtain the following quadratic optimization problem with complementarity constraints:
\[
\begin{array}{llrrcl}
\tag{P2($\rho$)}\label{P2} & \ell_\rho (Q) = &\min\limits_{(\x,\v) \in \R^n \times \R^n} & \x\T  \Qb \x & & \\
 && \textrm{s.t.} 
  & \e\T  \x & = & 1 \\ 
& & & \e\T\v & = & n - \rho \\ 
& & & \x\T\v & = & 0 \\
& & & \v & \in & \{0,1\}^n  \\
& & & \x & \geq & \o.
    \end{array}
\]

Similarly, adding the redundant constraints $\v \leq \e$ and reintroducing the slack variables $\u \geq \o$ so that $\u + \v = \e$, \eqref{P2} can be reformulated as follows:
\[
\begin{array}{llrrcl}
\tag{P2A($\rho$)}\label{P2A} & \ell_\rho (\Qb) = &\min\limits_{(\x,\u,\v) \in \R^n \times \R^n \times \R^n} & \x\T  \Qb\x & & \\
 &&\textrm{s.t.}  & \e\T  \x & = &1 \\ 
  && & \e\T  \u & = & \rho \\
   && & \u + \v & = & \e \\
 && & \x\T \v & = & 0 \\ 
 & & & \u & \in & \{0,1\}^n  \\
 && & \x & \geq & \o \\
 && & \v &  \geq & \o \\
 && & \u & \geq & \o\,.
    \end{array}
\]

By~\cite{Bomz23a,Burer09}, \eqref{P2A} admits an equivalent reformulation as the following copositive optimization problem:
\[
\begin{array}{lllrcl}
 \tag{CP2A($\rho$)}\label{CP2A}
 &\ell_\rho (\Qb) = & \min\limits_{\Yb \in \cS^{3n+1}} & \du{\Qb}{{\mathsf Y}^{\x\x}}  & & \\
 && \textrm{s.t.}  & \e\T  \x & = & 1 \\ 
  && & \e\T  \u & = & \rho \\
 && & \u + \v & = & \e \\
 && & \du{\Eb}{{\mathsf Y}^{\x\x}} & = & 1 \\ 
 && & \du{\Eb}{{\mathsf Y}^{\u\u}} & = & \rho^2 \\ 
 && & \diag({\mathsf Y}^{\u\u}) & = & \u \\
 && & \diag({\mathsf Y}^{\u\u}) + 2 \, \diag({\mathsf Y}^{\u\v}) + \diag({\mathsf Y}^{\v\v}) & = & \e \\
  && & \e\T \diag({\mathsf Y}^{\x\v}) & = & 0 \\
 && & \Yb:=\begin{pmatrix}
    1 & \x\T & \u\T & \v\T\\
    \x & {\mathsf Y}^{\x\x} & {\mathsf Y}^{\x\u} & {\mathsf Y}^{\x\v}\\
    \u & ({\mathsf Y}^{\x\u})\T & {\mathsf Y}^{\u\u} & {\mathsf Y}^{\u\v}\\
    \v & ({\mathsf Y}^{\x\v})\T & ({\mathsf Y}^{\u\v})\T & {\mathsf Y}^{\v\v}\\
    \end{pmatrix}& \in & \cp{3n+1}.
    \end{array}
\]

Once again, the equivalence between \eqref{P2A} and \eqref{CP2A} is understood similarly to that between \eqref{P1A} and \eqref{CP1A}.

\section{Two convex relaxations and their smaller equivalents}  
\label{SecConvRelax}

In this section, we consider two convex relaxations arising from the conic formulations \eqref{CP1A} and \eqref{CP2A}. 

Despite the fact that \eqref{CP1A} and \eqref{CP2A} are convex optimization problems, they are, in general, NP-hard~\cite{MurtyK87}. A well-known tractable convex relaxation of completely positive conic problems is obtained by replacing the intractable cone $\cp{d}$ with the larger cone $\cD_{d}$ of doubly nonnegative (DNN) matrices, i.e., the convex cone of symmetric $d\times d$ matrices that are both PSD and componentwise nonnegative. It follows that the optimal value of the DNN relaxation is a lower bound on the optimal value of the completely positive conic optimization problem. Here, we present the DNN relaxations arising from conic optimization problems \eqref{CP1A} and \eqref{CP2A}. Furthermore, we establish that each of the two DNN relaxations can, in fact, be reformulated in smaller dimensions without sacrificing the strength of the corresponding lower bound. Finally, we present a theoretical comparison of the strength of the two DNN relaxations.

\subsection{DNN relaxations of the direct MIQP formulation} \label{ConvRelaxP1}

In this section, we consider the DNN relaxation of the problem \eqref{CP1A}. By replacing the intractable conic constraint $\Zb \in \cp{4n+1}$ in \eqref{CP1A} by $\Zb \in \cD_{4n+1}$, where $\cD_{4n+1}$ denotes the cone of doubly nonnegative (DNN) matrices in $\cS^{4n+1}$, we obtain the following tractable convex relaxation of \eqref{P1A}:

\[
\begin{array}{lllrcl}
 \tag{D1A($\rho$)}\label{D1A}
 &\nu(\textrm{D1A($\rho$)}) := & \min\limits_{\Zb \in \cS^{4n + 1}} & \du{\Qb}{{\mathsf Z}^{\x\x}}  & & \\
 && \textrm{s.t.}  & \e\T  \x & = & 1 \\ 
  && & \e\T  \u & = & \rho \\ 
   && & \x + \y & = & \u \\
 && & \u + \v & = & \e \\
 && & \du{\Eb}{{\mathsf Z}^{\x\x}} & = & 1 \\ 
 && & \du{\Eb}{{\mathsf Z}^{\u\u}} & = & \rho^2 \\ 
  && & \diag({\mathsf Z}^{\u\u})  & = & \u\\
 && & \diag({\mathsf Z}^{\x\x}) + \diag({\mathsf Z}^{\y\y}) + \diag({\mathsf Z}^{\u\u}) + 2 \, \left[\diag({\mathsf Z}^{\x\y}) - \, \diag({\mathsf Z}^{\x\u}) - \, \diag({\mathsf Z}^{\u\y}) \right] & = & \o \\
 && & \diag({\mathsf Z}^{\u\u}) + 2 \, \diag({\mathsf Z}^{\u\v}) + \diag({\mathsf Z}^{\v\v}) & = & \e \\
 && & \Zb:=\begin{pmatrix}
    1 & \x\T & \u\T & \v\T & \y\T\\
    \x & {\mathsf Z}^{\x\x} & {\mathsf Z}^{\x\u} & {\mathsf Z}^{\x\v} & {\mathsf Z}^{\x\y} \\
    \u & ({\mathsf Z}^{\x\u})\T & {\mathsf Z}^{\u\u} & {\mathsf Z}^{\u\v} & {\mathsf Z}^{\u\y} \\
    \v & ({\mathsf Z}^{\x\v})\T & ({\mathsf Z}^{\u\v})\T & {\mathsf Z}^{\v\v} & {\mathsf Z}^{\v\y} \\
    \y & ({\mathsf Z}^{\x\y})\T & ({\mathsf Z}^{\u\y})\T & ({\mathsf Z}^{\v\y})\T & {\mathsf Z}^{\y\y}
    \end{pmatrix}& \in & \cD_{4n+1}.
    \end{array}
\]

Note that \eqref{D1A} consists of $5 n + 4$ linear equality constraints and a doubly nonnegative constraint in $\cS^{4n+1}$. First, we make several observations about feasible solutions of \eqref{D1A}.

\begin{lemma} \label{D1Aprops}
Let $\Zb \in \cS^{4n + 1}$ be (D1A($\rho$))-feasible. Then, the following relations hold:
\begin{eqnarray} \label{D1Arels}
\x \e\T - \Zb^{\x \u} & = & \Zb^{\x \v} \geq \Ob, \label{D1Arels1} \\
- \Zb^{\x\x} + \Zb^{\x\u} & = & \Zb^{\x\y} \geq \Ob, \label{D1Arels2} \\
\Zb^{\x\x} - \Zb^{\x\u} - (\Zb^{\x\u})\T + \Zb^{\u\u} & = & \Zb^{\y\y} \geq \Ob, \label{D1Arels3} \\
\e \e\T - \e \u\T - \u \e\T + \Zb^{\u\u} & = & \Zb^{\v\v} \geq \Ob, \label{D1Arels4} \\
-\e \x\T + \left(\Zb^{\x\u}\right)\T + \e \u\T - \Zb^{\u\u} & = & \Zb^{\v \y} \geq \Ob \,,  \label{D1Arels5} \\
\diag(\Zb^{\x\u}) & = & \x \, , \label{D1Arels6} \\
\diag(\Zb^{\x\v}) & = & \o \, , \label{D1Arels7} \\
\diag(\Zb^{\v\y}) & = & \o \, , \label{D1Arels8} \\
\diag(\Zb^{\v\v}) & = & \v \, , \label{D1Arels9} \\
\diag(\Zb^{\u\v}) & = & \o \, . \label{D1Arels10}
\end{eqnarray}
\end{lemma}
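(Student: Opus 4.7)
The strategy rests on a single recurring tool: since $\Zb \in \cD_{4n+1}$ is in particular positive semidefinite, any $\w \in \R^{4n+1}$ with $\w\T\Zb\w = 0$ must satisfy $\Zb \w = \o$. The key recognition is that the two seemingly ad hoc ``diagonal'' equalities in \eqref{D1A} are precisely the moment-level encodings of $\x + \y - \u = \o$ and $\u + \v - \e = \o$. The plan is to exploit this by building two parametric families of isotropic vectors; reading $\Zb\w = \o$ block-by-block will yield the matrix identities \eqref{D1Arels1}--\eqref{D1Arels5}, after which componentwise nonnegativity $\Zb \geq \Ob$ produces the diagonal identities \eqref{D1Arels6}--\eqref{D1Arels10}.

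Label the blocks of $\Zb$ by $(0, \x, \u, \v, \y)$ and let $\f_i \in \R^n$ denote the $i$-th standard unit vector. For each $i \in \{1,\ldots,n\}$, first choose $\w$ to be $0$ in the leading coordinate and in the $\v$-block, $\f_i$ in the $\x$- and $\y$-blocks, and $-\f_i$ in the $\u$-block. A short expansion shows $\w\T\Zb\w$ equals the $i$-th component of $\diag(\Zb^{\x\x}) + \diag(\Zb^{\y\y}) + \diag(\Zb^{\u\u}) + 2[\diag(\Zb^{\x\y}) - \diag(\Zb^{\x\u}) - \diag(\Zb^{\u\y})]$, which vanishes by the first diagonal constraint of \eqref{D1A}; hence $\Zb \w = \o$. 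Reading off the $\x$-, $\u$-, $\v$-, and $\y$-blocks as $i$ varies gives, respectively, $\Zb^{\x\y} = \Zb^{\x\u} - \Zb^{\x\x}$ (which is \eqref{D1Arels2}), $\Zb^{\u\y} = \Zb^{\u\u} - \Zb^{\u\x}$, $\Zb^{\v\y} = \Zb^{\v\u} - \Zb^{\v\x}$, and $\Zb^{\y\y} = \Zb^{\y\u} - \Zb^{\y\x}$; combining the first two (after taking transposes and using symmetry of $\Zb^{\u\u}$) with the last produces \eqref{D1Arels3}. Analogously, choosing $\w$ to be $-1$ in the leading coordinate, $\o$ in the $\x$- and $\y$-blocks, and $\f_i$ in both the $\u$- and $\v$-blocks, the second diagonal constraint together with $\u + \v = \e$ forces $\w\T\Zb\w = 0$. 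The block-by-block reading now gives $\Zb^{\x\u} + \Zb^{\x\v} = \x\e\T$ (which is \eqref{D1Arels1}), $\Zb^{\u\u} + \Zb^{\u\v} = \u\e\T$, and $\Zb^{\v\u} + \Zb^{\v\v} = \v\e\T$. Substituting $\v = \e - \u$ and the transpose $\Zb^{\v\u} = \e\u\T - \Zb^{\u\u}$ of the second into the third yields \eqref{D1Arels4}; combining these with the intermediate identity $\Zb^{\v\y} = \Zb^{\v\u} - \Zb^{\v\x}$ from the previous family yields \eqref{D1Arels5}.

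The diagonal identities now follow by short calculations. Taking the diagonal of \eqref{D1Arels4} together with the hypothesis $\diag(\Zb^{\u\u}) = \u$ yields \eqref{D1Arels9}, and the diagonal of the intermediate identity $\Zb^{\u\v} = \u\e\T - \Zb^{\u\u}$ yields \eqref{D1Arels10}. For \eqref{D1Arels6}, the argument is a squeeze: the $(i,i)$-entry of $\Zb^{\x\v} \geq \Ob$ forces $[\Zb^{\x\u}]_{ii} \leq x_i$ via \eqref{D1Arels1}, while the $(i,i)$-entry of $\Zb^{\v\y} \geq \Ob$ forces $[\Zb^{\x\u}]_{ii} \geq x_i$ via \eqref{D1Arels5} after the $\diag(\Zb^{\u\u})$-terms cancel; together these pin down \eqref{D1Arels6}. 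Identities \eqref{D1Arels7} and \eqref{D1Arels8} are then immediate diagonal specializations of \eqref{D1Arels1} and \eqref{D1Arels5}. The main conceptual hurdle is the initial translation between the DNN relaxation's diagonal equalities and isotropy certificates for $\Zb$; after that, PSD-ness drives the five matrix-level identities and componentwise nonnegativity converts them into sharpness on the diagonal.
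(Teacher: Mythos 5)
Your proof is correct, and it reaches the conclusion by a mechanism that differs from the paper's in an instructive way. The paper writes the Schur complement of the leading entry of $\Zb$ as a sum of rank-one terms $(\a^k;\b^k;\c^k;\d^k)(\a^k;\b^k;\c^k;\d^k)\T$ and uses the two diagonal constraints to force $\b^k=-\c^k$ and $\a^k+\d^k=\b^k$ for every factor, after which each block identity in \eqref{D1Arels1}--\eqref{D1Arels5} is verified by multiplying out the factors; you instead keep $\Zb$ intact, observe that the two diagonal constraints certify $\w\T\Zb\w=0$ for the vectors $\w=(0;\f_i;-\f_i;\o;\f_i)$ and $\w=(-1;\o;\f_i;\f_i;\o)$, invoke the standard fact that a PSD matrix annihilates its isotropic vectors, and read the block identities directly off $\Zb\w=\o$. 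The two arguments are two faces of the same principle (a vanishing quadratic form on a PSD matrix forces linear relations), but yours avoids introducing the auxiliary decomposition altogether and makes the role of the constraints as kernel certificates explicit, which is arguably cleaner; the paper's factor-level relations have the minor advantage of being reusable verbatim in the proof of Lemma~\ref{D2Aprops}. Your handling of the remaining items matches the paper's: the squeeze for \eqref{D1Arels6} from the diagonals of \eqref{D1Arels1} and \eqref{D1Arels5}, the specializations giving \eqref{D1Arels7}--\eqref{D1Arels9}, and \eqref{D1Arels10} from the intermediate identity $\Zb^{\u\v}=\u\e\T-\Zb^{\u\u}$ (the paper derives this last one instead from the ninth equality constraint together with \eqref{D1Arels9}, an equivalent computation). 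All the componentwise inequalities you assert do follow from $\Zb\ge\Ob$, so there is no gap.
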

\begin{proof}
Let $\Zb \in \cS^{4n + 1}$ be \eqref{D1A}-feasible. By using the Schur complementation, we obtain
\[
\begin{pmatrix}
    {\mathsf Z}^{\x\x} & {\mathsf Z}^{\x\u} & {\mathsf Z}^{\x\v} & {\mathsf Z}^{\x\y} \\
    ({\mathsf Z}^{\x\u})\T & {\mathsf Z}^{\u\u} & {\mathsf Z}^{\u\v} & {\mathsf Z}^{\u\y} \\
    ({\mathsf Z}^{\x\v})\T & ({\mathsf Z}^{\u\v})\T & {\mathsf Z}^{\v\v} & {\mathsf Z}^{\v\y} \\
    ({\mathsf Z}^{\x\y})\T & ({\mathsf Z}^{\u\y})\T & ({\mathsf Z}^{\v\y})\T & {\mathsf Z}^{\y\y}
    \end{pmatrix} = \begin{pmatrix} \x \\ \u \\ \v \\ \y \end{pmatrix} \begin{pmatrix} \x \\ \u \\ \v \\ \y \end{pmatrix}\T + \sum\limits_{k \in K} \begin{pmatrix} {\a}^k \\ {\b}^k \\ {\c}^k \\ {\d}^k \end{pmatrix} \begin{pmatrix} {\a}^k \\ {\b}^k \\ {\c}^k \\ {\d}^k \end{pmatrix}\T,
\]
where $K$ is a finite set and $\{{\a}^k,{\b}^k, {\c}^k ,{\d}^k\} \subset \R^n$ for each $k \in K$. Using $\diag({\mathsf Z}^{\u\u}) + 2 \, \diag({\mathsf Z}^{\u\v}) + \diag({\mathsf Z}^{\v\v}) = \e$, we obtain
\begin{eqnarray*}
u_i^2 + \sum\limits_{k \in K} \left( b_i^k \right)^2 + 2 \left[ u_i v_i + \sum\limits_{k \in K} \left( b_i^k \right) \left( c_i^k \right) \right] + v_i^2 + \sum\limits_{k \in K} \left( c_i^k \right)^2 & = & \left( u_i + v_i \right)^2 + \sum\limits_{k \in K} \left( b_i^k + c_i^k \right)^2 \\
 & = & 1 + \sum\limits_{k \in K} \left( b_i^k + c_i^k \right)^2  =  1
\end{eqnarray*}
for each $i \in \{ 1,\ldots,n\}$, where we used $\u + \v = \e$ in the second equality. We conclude that
\begin{equation} \label{imp1}
    \b^k = - \c^k, \quad \mbox{for all }k \in K\, .
\end{equation}
In a similar manner, by using $\diag({\mathsf Z}^{\x\x}) + \diag({\mathsf Z}^{\y\y}) + \diag({\mathsf Z}^{\u\u}) + 2 \, \left[\diag({\mathsf Z}^{\x\y}) - \, \diag({\mathsf Z}^{\x\u}) - \, \diag({\mathsf Z}^{\u\y}) \right] = \o$, we arrive at
\[
\left( x_i +  y_i -  u_i \right)^2 + \sum\limits_{k \in K} \left(  a_i^k +  d_i^k -  b_i^k \right)^2 = 0
\]
for each $i \in\{ 1,\ldots,n\}$, which, together with $\x + \y = \u$, implies that
\begin{equation} \label{imp2}
\a^k + \d^k = \b^k, \quad  \mbox{for all } k \in K\, .    
\end{equation}
Therefore,
\[
\x \e\T - \Zb^{\x \u} = \x \e\T - \x \u\T - \sum\limits_{k \in K} \a^k \left( \b^k \right)\T = \x \v\T + \sum\limits_{k \in K} \a^k \left( \c^k \right)\T = \Zb^{\x \v} \geq \Ob,
\]
where we used $\u + \v = \e$, \eqref{imp1}, and $\Zb \in \cD_{4n+1}$. This establishes \eqref{D1Arels1}. Arguing similarly, 
\[
- \Zb^{\x\x} + \Zb^{\x\u} = - \x \x\T - \sum\limits_{k \in K} \a^k \left( \a^k \right)\T + \x \u\T + \sum\limits_{k \in K} \a^k \left( \b^k \right)\T = \x \y\T + \sum\limits_{k \in K} \a^k \left( \d^k \right)\T = \Zb^{\x\y} \geq \Ob,
\]
where we used $\x + \y = \u$, \eqref{imp2}, and $\Zb \in \cD_{4n+1}$. This establishes \eqref{D1Arels2}. Next,
\begin{eqnarray*}
\Zb^{\x\x} - \Zb^{\x\u} - (\Zb^{\x\u})\T + \Zb^{\u\u} & = & \x \x\T + \sum\limits_{k \in K} \a^k \left( \a^k \right)\T - \x \u\T - \sum\limits_{k \in K} \a^k \left( \b^k \right)\T \\
 &  & \quad - \u \x\T - \sum\limits_{k \in K} \b^k \left( \a^k \right)\T + \u \u\T + \sum\limits_{k \in K} \b^k \left( \b^k \right)\T \\
 & = & \y \y^T + \sum\limits_{k \in K} \d^k \left( \d^k \right)\T \\
 & = & \Zb^{\y\y} \geq \Ob,
\end{eqnarray*}
where we used $\x + \y = \u$, \eqref{imp2}, and $\Zb \in \cD_{4n+1}$. Similarly, 
\[
\e \e\T - \e \u\T - \u \e\T + \Zb^{\u\u} = \e \e\T - \e \u\T - \u \e\T + \u \u\T + \sum\limits_{k \in K} \b^k \left( \b^k \right)\T = \v \v\T + \sum\limits_{k \in K} \c^k \left( \c^k \right)\T = \Zb^{\v\v} \geq \Ob,
\]
where we used $\u + \v = \e$, \eqref{imp1}, and $\Zb \in \cD_{4n+1}$, establishing \eqref{D1Arels4}. Next, 
\begin{eqnarray*}
    -\e \x\T + \left(\Zb^{\x\u}\right)\T + \e \u\T - \Zb^{\u\u} & = & -\e \x\T + \u \x\T + \sum\limits_{k \in K} \b^k \left( \a^k \right)\T + \e \u\T - \u \u\T - \sum\limits_{k \in K} \b^k \left( \b^k \right)\T \\
    & = & - \v \x\T + \v \u\T + \sum\limits_{k \in K} \c^k \left( \d^k \right)\T \\
    & = & \Zb^{\v \y} \geq \Ob, 
\end{eqnarray*}
where we used $\u + \v = \e$, $\x + \y = \u$, \eqref{imp1}, \eqref{imp2}, and $\Zb \in \cD_{4n+1}$, establishing \eqref{D1Arels5}. 

Using $\diag({\mathsf Z}^{\u\u}) = \u$, \eqref{D1Arels1}, and \eqref{D1Arels5},
\begin{eqnarray*}
\o\le \diag \left( \x \e\T - {\mathsf Z}^{\x\u} \right) & = & \x - \diag({\mathsf Z}^{\x\u}) \quad \mbox{and}\\
\o\le \diag \left( -\e \x\T + ({\mathsf Z}^{\x\u})\T + \e \u\T - {\mathsf Z}^{\u\u} \right) & = & -\x + \diag({\mathsf Z}^{\x\u}) + \u - \diag({\mathsf Z}^{\u\u}) = -\x + \diag({\mathsf Z}^{\x\u})\, ,
\end{eqnarray*}
which together yield \eqref{D1Arels6}. By \eqref{D1Arels6} and $\diag({\mathsf Z}^{\u\u}) = \u$, it is easy to see that \eqref{D1Arels7} and \eqref{D1Arels8} follow from \eqref{D1Arels1} and \eqref{D1Arels5}, respectively. Using \eqref{D1Arels4}, $\diag({\mathsf Z}^{\u\u}) = \u$ and $\u + \v = \e$, we obtain
\[
\diag(\e \e\T - \e \u\T - \u \e\T + \Zb^{\u\u})  = 
 \e - 2 \, \u + \u = \v = \diag(\Zb^{\v\v}),
\]
which establishes \eqref{D1Arels9}. 
Finally,
\eqref{D1Arels10} follows from \eqref{D1Arels9}, $\diag({\mathsf Z}^{\u\u}) = \u$, $\diag({\mathsf Z}^{\u\u}) + 2 \, \diag({\mathsf Z}^{\u\v}) + \diag({\mathsf Z}^{\v\v}) = \e$, and $\u + \v = \e$. This completes the proof.
\end{proof}

In view of Lemma~\ref{D1Aprops}, let us now consider the following optimization problem:
\[
\begin{array}{lllrcl}
 \tag{D1B($\rho$)}\label{D1B}
 &\nu(\textrm{D1B($\rho$)}) := & \min\limits_{\Wb \in \cS^{2n + 1}} & \du{\Qb}{{\mathsf W}^{\x\x}}  & & \\
 && \textrm{s.t.}  & \e\T  \x & = & 1 \\ 
  && & \e\T  \u & = & \rho \\ 
 && & \du{\Eb}{{\mathsf W}^{\x\x}} & = & 1 \\ 
 && & \du{\Eb}{{\mathsf W}^{\u\u}} & = & \rho^2 \\ 
 && & \diag({\mathsf W}^{\u\u})  & = & \u\\
 && & \x \e\T - {\mathsf W}^{\x\u} & \geq & \Ob\\
&& & -{\mathsf W}^{\x\x} + {\mathsf W}^{\x\u} & \geq & \Ob\\
 && & {\mathsf W}^{\x\x} - {\mathsf W}^{\x\u} - ({\mathsf W}^{\x\u})\T + {\mathsf W}^{\u\u} & \geq & \Ob\\
 && & \e \e\T - \e \u\T - \u \e\T + {\mathsf W}^{\u\u}  & \geq & \Ob\\
  && & -\e \x\T + ({\mathsf W}^{\x\u})\T + \e \u\T - {\mathsf W}^{\u\u} & \geq & \Ob\\
 && & {\mathsf W}^{\x\x}  & \geq & \Ob\\
 && & \Wb:=\begin{pmatrix}
    1 & \x\T & \u\T \\
    \x & {\mathsf W}^{\x\x} & {\mathsf W}^{\x\u} \\
    \u & ({\mathsf W}^{\x\u})\T & {\mathsf W}^{\u\u} 
    \end{pmatrix}& \succeq & \Ob \,.
    \end{array}
\]

It is easy to see that \eqref{D1B} is a convex relaxation of \eqref{P1} since, for any feasible solution $(\x,\u) \in \R^n \times \R^n$ of \eqref{P1}, 
\[
\Wb = \begin{pmatrix}
    1 \\ \x \\ \u
    \end{pmatrix} \begin{pmatrix}
    1 \\ \x \\ \u
    \end{pmatrix}\T \in \cS^{2n + 1}
\]
is a feasible solution of \eqref{D1B} with the same objective function value. 
Note that \eqref{D1B} has $n + 4$ linear equality constraints, $(9/2) n^2 +  (3/2) n $ linear inequality constraints, and a PSD constraint in $\cS^{2n + 1}$. It is worth noticing that the inequality constraints of \eqref{D1B} are precisely given by all the RLT (reformulation-linearization technique) constraints obtained from the inequalities $\x \geq \o$, $\x \leq \u$, and $\u \leq \e$ (see, e.g.,~\cite{Sherali1999}). 

First, we present some properties of feasible solutions of \eqref{D1B}.

\begin{lemma} \label{D1Bprops}
Let $\Wb \in \cS^{2n+1}$ be \eqref{D1B}-feasible. Then, 
\begin{eqnarray} \label{D1Brels}
 \Wb & \in & \cD_{2n + 1} \, , \label{D1Brels1} \\
 -({\mathsf W}^{\x\u})\T + {\mathsf W}^{\u\u} & \geq & \Ob  \, , \label{D1Brels2} \\
 \u \e\T - {\mathsf W}^{\u\u} & \geq & \Ob \, , \label{D1Brels3}\\
\u & \leq & \e\, , \label{D1Brels4} \\
\x & \leq & \u \, . \label{D1Brels6}
\end{eqnarray}
\end{lemma}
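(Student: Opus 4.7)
My plan is to peel off the five relations of the lemma roughly in their stated order, by combining three kinds of manipulations: (a) adding or subtracting pairs of the RLT-type inequality constraints already listed in~(D1B), (b) reading off diagonal entries, and (c) invoking one small principal minor of the PSD constraint on~$\Wb$.

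First I would establish $\diag(\Wb^{\x\u})=\x$. The diagonal of $\x\e\T-\Wb^{\x\u}\geq\Ob$ gives $\Wb^{\x\u}_{ii}\leq x_i$, while the diagonal of $-\e\x\T+(\Wb^{\x\u})\T+\e\u\T-\Wb^{\u\u}\geq\Ob$, combined with $\diag(\Wb^{\u\u})=\u$, yields $\Wb^{\x\u}_{ii}\geq x_i$. Since $-\Wb^{\x\x}+\Wb^{\x\u}\geq\Ob$ and $\Wb^{\x\x}\geq\Ob$ are among the constraints, this gives $\Wb^{\x\u}\geq\Wb^{\x\x}\geq\Ob$, and taking the diagonal then forces $\x\geq\o$. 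Next, adding the constraints $\Wb^{\x\x}-\Wb^{\x\u}-(\Wb^{\x\u})\T+\Wb^{\u\u}\geq\Ob$ and $-\Wb^{\x\x}+\Wb^{\x\u}\geq\Ob$ yields $-(\Wb^{\x\u})\T+\Wb^{\u\u}\geq\Ob$, which is~\eqref{D1Brels2}; this also forces $\Wb^{\u\u}\geq(\Wb^{\x\u})\T\geq\Ob$, and then $\u=\diag(\Wb^{\u\u})\geq\o$. At this stage every block of $\Wb$ is componentwise nonnegative, so combined with the PSD constraint in~(D1B) we obtain~\eqref{D1Brels1}, i.e., $\Wb\in\cD_{2n+1}$.

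For~\eqref{D1Brels3}, I would add the transpose of $\x\e\T-\Wb^{\x\u}\geq\Ob$, namely $\e\x\T-(\Wb^{\x\u})\T\geq\Ob$, to $-\e\x\T+(\Wb^{\x\u})\T+\e\u\T-\Wb^{\u\u}\geq\Ob$ to get $\e\u\T-\Wb^{\u\u}\geq\Ob$; transposing this and using symmetry of~$\Wb^{\u\u}$ yields $\u\e\T-\Wb^{\u\u}\geq\Ob$. Relation~\eqref{D1Brels4}, $\u\leq\e$, then follows immediately from the diagonal of $\e\e\T-\e\u\T-\u\e\T+\Wb^{\u\u}\geq\Ob$, which reads $1-2u_i+u_i=1-u_i\geq 0$.

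The main obstacle will be~\eqref{D1Brels6}, $\x\leq\u$, since it genuinely uses the PSD structure and cannot be obtained by a purely linear combination of the listed inequalities. My plan is to combine two ingredients at each coordinate~$i$: (i) the $2\times 2$ PSD principal minor of~$\Wb$ on rows and columns $1+i$ and $1+n+i$, which upon substituting $\Wb^{\x\u}_{ii}=x_i$ and $\Wb^{\u\u}_{ii}=u_i$ established above yields $\Wb^{\x\x}_{ii}\,u_i\geq x_i^2$; and (ii) the diagonal of $-\Wb^{\x\x}+\Wb^{\x\u}\geq\Ob$, which together with $\Wb^{\x\u}_{ii}=x_i$ gives $\Wb^{\x\x}_{ii}\leq x_i$. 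Multiplying the second inequality by $u_i\geq 0$ and chaining with the first gives $x_iu_i\geq\Wb^{\x\x}_{ii}u_i\geq x_i^2$. If $x_i>0$, dividing by~$x_i$ yields $u_i\geq x_i$; if $x_i=0$, the inequality is trivial from $\u\geq\o$, completing the proof.
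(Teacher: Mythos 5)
Your proof is correct, and for the first four relations it follows essentially the same route as the paper: deduce $\Wb^{\x\u}\ge\Ob$ from $\Wb^{\x\x}\ge\Ob$ and $-\Wb^{\x\x}+\Wb^{\x\u}\ge\Ob$, add the two RLT constraints involving $\Wb^{\x\x}-\Wb^{\x\u}-(\Wb^{\x\u})\T+\Wb^{\u\u}$ to get \eqref{D1Brels2} and hence $\Wb\in\cD_{2n+1}$, add $\e\x\T-(\Wb^{\x\u})\T\ge\Ob$ to the last RLT block to get \eqref{D1Brels3}, and read off diagonals to get $\diag(\Wb^{\x\u})=\x$. (Your derivation of $\u\le\e$ directly from the diagonal of $\e\e\T-\e\u\T-\u\e\T+\Wb^{\u\u}\ge\Ob$ is a harmless minor variant of the paper's, which instead adds \eqref{D1Brels3} to that constraint.)

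Where you genuinely diverge is \eqref{D1Brels6}, and here your framing contains a factual error even though your argument is valid. You assert that $\x\le\u$ ``genuinely uses the PSD structure and cannot be obtained by a purely linear combination of the listed inequalities.'' That is not so: the paper obtains it linearly. Having already established $\diag(\Wb^{\x\u})=\x$ (itself a linear consequence of the diagonals of two listed constraints together with $\diag(\Wb^{\u\u})=\u$), one simply takes the diagonal of \eqref{D1Brels2} — which is the sum of two listed constraints — to get $\o\le-\diag(\Wb^{\x\u})+\diag(\Wb^{\u\u})=-\x+\u$. Your alternative via the $2\times2$ principal minor on rows $1+i$ and $1+n+i$ (giving $\Wb^{\x\x}_{ii}u_i\ge x_i^2$, then chaining with $\Wb^{\x\x}_{ii}\le x_i$ and $u_i\ge0$, with the case split at $x_i=0$) is correct, but it is strictly more machinery than needed, and the claim of necessity should be retracted.
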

\begin{proof}
Let $\Wb \in \cS^{2n+1}$ be \eqref{D1B}-feasible. Since $\Wb^{\x \x}\ge\Ob$ and $- \Wb^{\x \x} + \Wb^{\x \u} \ge \Ob$, we obtain $\Wb^{\x \u}\ge \Ob$. By $ \x \e\T - {\mathsf W}^{\x\u} \geq \Ob$, we arrive at $\x \geq \o$. Since $\diag({\mathsf W}^{\u\u}) =  \u$ and $\Wb \succeq \Ob$, we have $\u \geq \o$. Then, adding the inequalities
$-{\mathsf W}^{\x\x} + {\mathsf W}^{\x\u}  \geq \Ob$
and ${\mathsf W}^{\x\x} - {\mathsf W}^{\x\u} - ({\mathsf W}^{\x\u})\T + {\mathsf W}^{\u\u} \geq  \Ob$ yields \eqref{D1Brels2}, which, in turn, establishes \eqref{D1Brels1} since ${\mathsf W}^{\u\u} \geq (\Wb^{\x \u})\T \geq \Ob$. Similarly, adding the inequalities $\left(\x \e\T - {\mathsf W}^{\x\u}\right)\T = \e \x\T - ({\mathsf W}^{\x\u})\T \geq \Ob$ and $-\e \x\T + ({\mathsf W}^{\x\u})\T + \e \u\T - {\mathsf W}^{\u\u} \geq \Ob$, we arrive at \eqref{D1Brels3}. Adding \eqref{D1Brels3} to $\e \e\T - \e \u\T - \u \e\T + {\mathsf W}^{\u\u} \geq \Ob$, we get $\e(\e-\u)\T\ge \Ob$, which yields \eqref{D1Brels4}.
Similarly, using $\diag({\mathsf W}^{\u\u}) = \u$,
\begin{eqnarray*}
\o\le \diag \left( \x \e\T - {\mathsf W}^{\x\u} \right) & = & \x - \diag({\mathsf W}^{\x\u}) \quad \mbox{and}\\
\o\le \diag \left( -\e \x\T + ({\mathsf W}^{\x\u})\T + \e \u\T - {\mathsf W}^{\u\u} \right) & = & -\x + \diag({\mathsf W}^{\x\u}) + \u - \diag({\mathsf W}^{\u\u}) = -\x + \diag({\mathsf W}^{\x\u})\, ,
\end{eqnarray*}
which together yield $\diag({\mathsf W}^{\x\u}) = \x$. Combining this with \eqref{D1Brels2}, we get
\[\o\le 
\diag \left( -({\mathsf W}^{\x\u})\T + {\mathsf W}^{\u\u} \right) = - \diag({\mathsf W}^{\x\u}) + \diag({\mathsf W}^{\u\u}) = - \x + \u \, ,
\]
where we used $\diag({\mathsf W}^{\u\u}) = \u$. This establishes \eqref{D1Brels6} and completes the proof.
\end{proof}

Our next result shows that \eqref{D1B} is equivalent to the DNN relaxation (D1A($\rho$)) of \eqref{CP1A}. 

\begin{proposition} \label{DNNequiv1}
\eqref{D1A} and \eqref{D1B} are equivalent to each other. Therefore, $\nu(\textrm{D1A($\rho$)}) = \nu(\textrm{D1B($\rho$)})$.  
\end{proposition}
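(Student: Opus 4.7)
The plan is to prove the two inequalities $\nu(\textrm{D1B}(\rho))\le \nu(\textrm{D1A}(\rho))$ and $\nu(\textrm{D1A}(\rho))\le \nu(\textrm{D1B}(\rho))$ by constructing, in each direction, a feasible point of one problem from a feasible point of the other, with matching objective value. The key observation is that the extra blocks in $\Zb$ involving $\v$ and $\y$ are completely determined by the blocks in $\x$ and $\u$ via the identities $\v=\e-\u$ and $\y=\u-\x$, which is already the content of Lemma~\ref{D1Aprops}.

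For the direction $\nu(\textrm{D1B}(\rho))\le \nu(\textrm{D1A}(\rho))$, given any \eqref{D1A}-feasible $\Zb$, I would simply take $\Wb$ to be the principal submatrix of $\Zb$ indexed by the rows/columns corresponding to $(1,\x,\u)$, so that ${\mathsf W}^{\x\x}={\mathsf Z}^{\x\x}$, ${\mathsf W}^{\x\u}={\mathsf Z}^{\x\u}$, and ${\mathsf W}^{\u\u}={\mathsf Z}^{\u\u}$. The linear equalities on $\x$, $\u$, ${\mathsf W}^{\x\x}$, and ${\mathsf W}^{\u\u}$ are inherited verbatim; the PSD condition is inherited as a principal submatrix of a PSD matrix; the inequality ${\mathsf W}^{\x\x}\ge \Ob$ comes from $\Zb\ge \Ob$; and the five ``RLT-type'' inequality constraints of \eqref{D1B} are exactly the nonnegativity statements \eqref{D1Arels1}--\eqref{D1Arels5} of Lemma~\ref{D1Aprops}. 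The objectives coincide because ${\mathsf W}^{\x\x}={\mathsf Z}^{\x\x}$.

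For the reverse direction $\nu(\textrm{D1A}(\rho))\le \nu(\textrm{D1B}(\rho))$, given any \eqref{D1B}-feasible $\Wb$, I would lift it as $\Zb = \Lb\,\Wb\,\Lb\T$, where $\Lb \in \R^{(4n+1)\times(2n+1)}$ is the linear map encoding the substitutions $\v=\e-\u$ and $\y=\u-\x$, namely the block matrix whose block-rows are $(1,\o\T,\o\T)$, $(\o,\Ib,\Ob)$, $(\o,\Ob,\Ib)$, $(\e,\Ob,-\Ib)$, and $(\o,-\Ib,\Ib)$. A direct block computation shows that the resulting blocks of $\Zb$ are exactly the right-hand sides of \eqref{D1Arels1}--\eqref{D1Arels5} together with ${\mathsf Z}^{\u\v}=\u\e\T-{\mathsf W}^{\u\u}$ and ${\mathsf Z}^{\u\y}=-({\mathsf W}^{\x\u})\T+{\mathsf W}^{\u\u}$, and in particular ${\mathsf Z}^{\x\x}={\mathsf W}^{\x\x}$, so the objective values agree. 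The constraint $\Zb\succeq \Ob$ is automatic from $\Wb\succeq\Ob$; the componentwise nonnegativity of all five ``new'' blocks is precisely the inequality constraints of \eqref{D1B}, while nonnegativity of ${\mathsf Z}^{\u\v}$ and ${\mathsf Z}^{\u\y}$ is \eqref{D1Brels3} and \eqref{D1Brels2} of Lemma~\ref{D1Bprops}. The linear equalities $\x+\y=\u$ and $\u+\v=\e$ are baked into $\Lb$, and the remaining diagonal identities in \eqref{D1A} reduce to straightforward algebraic identities in $\diag({\mathsf W}^{\x\x}),\diag({\mathsf W}^{\x\u}),\diag({\mathsf W}^{\u\u})$, together with $\diag({\mathsf W}^{\u\u})=\u$.

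The only mildly delicate step is making sure that no constraint of \eqref{D1A} is missed: several of its conditions (notably the two diagonal quadratic-type equalities) look independent but are forced once one invokes the diagonal identities $\diag({\mathsf W}^{\x\u})=\x$ and $\diag({\mathsf W}^{\u\u})=\u$ that already appear in the proof of Lemma~\ref{D1Bprops}. Once this bookkeeping is organized, both directions are routine, and the equality $\nu(\textrm{D1A}(\rho))=\nu(\textrm{D1B}(\rho))$ follows.
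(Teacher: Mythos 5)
Your proposal is correct and follows essentially the same route as the paper's proof: the same lifting $\Zb=\Lb\,\Wb\,\Lb\T$ with the identical block matrix $\Lb$, the same extraction of the top-left $(2n+1)\times(2n+1)$ principal submatrix in the reverse direction, and the same reliance on Lemmas~\ref{D1Aprops} and~\ref{D1Bprops} to supply the nonnegativity of the reconstructed blocks and the diagonal identities. No substantive difference to report.
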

\begin{proof}
Let $\Wb \in \cS^{2n+1}$ be \eqref{D1B}-feasible. Let us define $\Zb \in \cS^{4n + 1}$ as follows:
\begin{equation} \label{defZfromW}
\Zb = \begin{pmatrix} 1 & \o\T & \o\T \\
\o & \Ib & \o \\ \o & \Ob & \Ib \\ \e & \Ob & - \Ib \\ \o & - \Ib & \Ib  \end{pmatrix}  \begin{pmatrix}
    1 & \x\T & \u\T \\
    \x & {\mathsf W}^{\x\x} & {\mathsf W}^{\x\u} \\
    \u & ({\mathsf W}^{\x\u})\T & {\mathsf W}^{\u\u} 
    \end{pmatrix} \begin{pmatrix} 1 & \o\T & \o\T \\
\o & \Ib & \o \\ \o & \Ob & \Ib \\ \e & \Ob & - \Ib \\ \Ob & - \Ib & \Ib \end{pmatrix}\T.  
\end{equation}
Therefore,
    \begin{equation} \label{Z_identity}
    \Zb = \begin{pmatrix}
    1 & \x\T & \u\T & \e\T - \u\T & -\x\T +\u\T  \\
    \x & {\mathsf W}^{\x\x} & {\mathsf W}^{\x\u} & \x \e\T - {\mathsf W}^{\x\u} & -{\mathsf W}^{\x\x} + {\mathsf W}^{\x\u} \\
    \u & ({\mathsf W}^{\x\u})\T & {\mathsf W}^{\u\u} & \u \e\T - {\mathsf W}^{\u\u} & -({\mathsf W}^{\x\u})\T + {\mathsf W}^{\u\u}  \\
    \e - \u & \e\x\T - ({\mathsf W}^{\x\u})\T & \e \u\T - {\mathsf W}^{\u\u} & \e \e\T - \e \u\T - \u \e\T + {\mathsf W}^{\u\u} & - \e \x\T + ({\mathsf W}^{\x\u})\T + \e \u\T - {\mathsf W}^{\u\u} \\ 
    -\x + \u & -{\mathsf W}^{\x\x} + ({\mathsf W}^{\x\u})\T & -{\mathsf W}^{\x\u} + {\mathsf W}^{\u\u} & -\x \e\T + {\mathsf W}^{\x\u} + \u \e\T - {\mathsf W}^{\u\u} & {\mathsf W}^{\x\x} - {\mathsf W}^{\x\u} - ({\mathsf W}^{\x\u})\T + {\mathsf W}^{\u\u} 
    \end{pmatrix}.
\end{equation}
We claim that $\Zb \in \cS^{4n + 1}$ is (D1A($\rho$))-feasible. By Lemma~\ref{D1Bprops}, \eqref{defZfromW}, and \eqref{Z_identity}, we conclude that $\Zb \in \cD_{4n + 1}$. Since $\v = \e -\u$, $\y = \u - \x$, $\Zb^{\x\x} = \Wb^{\x\x}$, and $\Zb^{\u\u} = \Wb^{\u\u}$, the first seven sets of equality constraints of (D1A($\rho$)) are satisfied. Considering the eighth set of equality constraints of (D1A($\rho$)) and substituting the corresponding submatrices in \eqref{Z_identity}, we obtain
\begin{eqnarray*}
\diag({\mathsf Z}^{\x\x}) & = & \diag({\mathsf W}^{\x\x}) \\
\diag({\mathsf Z}^{\y\y}) & = & \diag({\mathsf W}^{\x\x}) - 2 \, \diag({\mathsf W}^{\x\u}) + \diag({\mathsf W}^{\u\u}) \\
\diag({\mathsf Z}^{\u\u}) & = & \diag({\mathsf W}^{\u\u})\\
\diag({\mathsf Z}^{\x\y}) & = & - \diag({\mathsf W}^{\x\x}) + \diag({\mathsf W}^{\x\u}) \\
\diag({\mathsf Z}^{\x\u}) & = & \diag({\mathsf W}^{\x\u}) \\
\diag({\mathsf Z}^{\u\y}) & = & - \diag({\mathsf W}^{\x\u}) + \diag({\mathsf W}^{\u\u}),
\end{eqnarray*}
which implies that 
\[
\diag({\mathsf Z}^{\x\x}) + \diag({\mathsf Z}^{\y\y}) + \diag({\mathsf Z}^{\u\u}) + 2 \, \left[\diag({\mathsf Z}^{\x\y}) - \, \diag({\mathsf Z}^{\x\u}) - \, \diag({\mathsf Z}^{\u\y}) \right] = \o\, . 
\]
Finally, consider the last set of equality constraints of (D1A($\rho$)):
\[
\diag({\mathsf Z}^{\u\u}) + 2 \, \diag({\mathsf Z}^{\u\v}) + \diag({\mathsf Z}^{\v\v}) = \diag({\mathsf W}^{\u\u}) + 2 \, \left( \u - \diag({\mathsf W}^{\u\u}) \right) + \e - 2 \, \u + \diag({\mathsf W}^{\u\u}) = \e\, ,
\]
which implies that $\Zb \in \cS^{4n + 1}$ is (D1A($\rho$))-feasible and achieves the same objective function value as $\Wb$ in \eqref{D1B}.

Conversely, let $\Zb \in \cS^{4n + 1}$ be (D1A($\rho$))-feasible. Let $\Wb \in \cS^{2n + 1}$ be given by the top left $3 \times 3$-block of $\Zb$, i.e., 
\begin{equation} \label{defWfromZ}
\Wb = \begin{pmatrix}
    1 & \x\T & \u\T \\
    \x & {\mathsf Z}^{\x\x} & {\mathsf Z}^{\x\u} \\
    \u & ({\mathsf Z}^{\x\u})\T & {\mathsf Z}^{\u\u} 
    \end{pmatrix} \, .
\end{equation}
Clearly, $\Wb \in \cD_{2n + 1}$ and satisfies all linear equality constraints and $\Wb^{\x \x} = \Zb^{\x \x} \geq \Ob$ in \eqref{D1B}. By Lemma~\ref{D1Aprops}, we conclude that $\Wb$ satisfies each of the remaining linear inequality constraints and achieves the same objective function value as $\Zb$ in \eqref{D1A}. It follows that $\nu(\textrm{D1A($\rho$)}) = \nu(\textrm{D1B($\rho$)})$.
\end{proof}

Recall that \eqref{D1A} consists of $5 n + 4$ linear equality constraints and {\bf a doubly nonnegative constraint} in $\cS^{4n+1}$. In contrast, \eqref{D1B} has $n + 4$ linear equality constraints, $(9/2) n^2 +  (3/2) n $ linear inequality constraints, and {\bf a PSD constraint} in $\cS^{2n + 1}$. By Proposition~\ref{DNNequiv1}, we conclude that the lower bound arising from the DNN relaxation of \eqref{CP1A} can be computed by solving a conic optimization problem in a much smaller dimension.

\subsection{DNN relaxations of the complementarity constraint formulation}

In this section, we focus on the DNN relaxation of the copositive optimization problem \eqref{CP2A}. 
By replacing the intractable conic constraint $\Zb \in \cp{3n+1}$ in \eqref{CP2A} by $\Zb \in \cD_{3n+1}$, we obtain the following convex optimization problem:

\[
\begin{array}{lllrcl}
 \tag{D2A($\rho$)}\label{D2A}
 &\nu(\textrm{D2A($\rho$)}) := & \min\limits_{\Yb \in \cS^{3n + 1}} & \du{\Qb}{{\mathsf Y}^{\x\x}}  & & \\
 && \textrm{s.t.}  & \e\T  \x & = & 1 \\ 
  && & \e\T  \u & = & \rho \\
 && & \u + \v & = & \e \\
 && & \du{\Eb}{{\mathsf Y}^{\x\x}} & = & 1 \\ 
 && & \du{\Eb}{{\mathsf Y}^{\u\u}} & = & \rho^2 \\ 
 && & \diag({\mathsf Y}^{\u\u}) & = & \u \\
  && & \e\T \diag({\mathsf Y}^{\x\v}) & = & 0 \\
 && & \diag({\mathsf Y}^{\u\u}) + 2 \, \diag({\mathsf Y}^{\u\v}) + \diag({\mathsf Y}^{\v\v}) & = & \e \\
 && & \Yb:=\begin{pmatrix}
    1 & \x\T & \u\T & \v\T\\
    \x & {\mathsf Y}^{\x\x} & {\mathsf Y}^{\x\u} & {\mathsf Y}^{\x\v}\\
    \u & ({\mathsf Y}^{\x\u})\T & {\mathsf Y}^{\u\u} & {\mathsf Y}^{\u\v}\\
    \v & ({\mathsf Y}^{\x\v})\T & ({\mathsf Y}^{\u\v})\T & {\mathsf Y}^{\v\v}\\
    \end{pmatrix}& \in & \cD_{3n+1}.
    \end{array}
\]

Once again, it is easy to verify that \eqref{D2A} is a 
tractable convex relaxation of \eqref{P2A}. 
Note that \eqref{D2A} consists of $3 n + 5$ linear equality constraints and a doubly nonnegative constraint in $\cS^{3n+1}$. 

First, we establish several properties of feasible solutions of \eqref{D2A}.

\begin{lemma} \label{D2Aprops}
Let $\Yb \in \cS^{3n + 1}$ be \eqref{D2A}-feasible. Then, the following relations hold:
\begin{eqnarray} \label{D2Arels}
\x \e\T - \Yb^{\x \u} & = & \Yb^{\x \v} \geq \Ob, \label{D2Arels1} \\
\e \e\T - \e \u\T - \u \e\T + \Yb^{\u\u} & = & \Yb^{\v\v} \geq \Ob, \label{D2Arels2} \\
\u \e\T - \Yb^{\u\u}
& = & \Yb^{\u \v} \geq \Ob \,,  \label{D2Arels3} \\
\diag(\Yb^{\x\v}) & = & \o \, , \label{D2Arels4} \\
\diag(\Yb^{\x\u}) & = & \x \, , \label{D2Arels5} \\
\diag(\Yb^{\v\v}) & = & \v \, , \label{D2Arels6} \\
\diag(\Yb^{\u\v}) & = & \o \, . \label{D2Arels7}
\end{eqnarray}    
\end{lemma}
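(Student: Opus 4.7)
The strategy mirrors the proof of Lemma~\ref{D1Aprops}: first exploit the PSD condition on $\Yb$ via a Schur complementation to write the bottom-right $3n\times 3n$ block as the sum of the rank-one outer product $(\x;\u;\v)(\x;\u;\v)\T$ and a PSD remainder; then use the linear constraints of \eqref{D2A} to tie together the pieces.

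First I would invoke positive semidefiniteness of $\Yb$ together with the Schur complement to write
\[
\begin{pmatrix} \Yb^{\x\x} & \Yb^{\x\u} & \Yb^{\x\v}\\ (\Yb^{\x\u})\T & \Yb^{\u\u} & \Yb^{\u\v}\\ (\Yb^{\x\v})\T & (\Yb^{\u\v})\T & \Yb^{\v\v}\end{pmatrix}
=\begin{pmatrix}\x\\\u\\\v\end{pmatrix}\begin{pmatrix}\x\\\u\\\v\end{pmatrix}\T
+\sum_{k\in K}\begin{pmatrix}\a^k\\\b^k\\\c^k\end{pmatrix}\begin{pmatrix}\a^k\\\b^k\\\c^k\end{pmatrix}\T
\]
for some finite $K$ and $\a^k,\b^k,\c^k\in\R^n$. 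Combining the constraints $\u+\v=\e$ and $\diag(\Yb^{\u\u})+2\diag(\Yb^{\u\v})+\diag(\Yb^{\v\v})=\e$ as in the proof of Lemma~\ref{D1Aprops}, the componentwise identity $(u_i+v_i)^2+\sum_k(b_i^k+c_i^k)^2=1$ forces $\b^k=-\c^k$ for every $k\in K$. This is the only nontrivial structural input I need from the PSD block.

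With $\b^k=-\c^k$ in hand, identities \eqref{D2Arels1}--\eqref{D2Arels3} follow from direct algebraic manipulation. For instance, for \eqref{D2Arels1} I expand $\x\e\T-\Yb^{\x\u}=\x(\e-\u)\T-\sum_k\a^k(\b^k)\T=\x\v\T+\sum_k\a^k(\c^k)\T=\Yb^{\x\v}$, which is nonnegative because $\Yb\in\cD_{3n+1}$ gives $\Yb^{\x\v}\ge\Ob$ entrywise. Identity \eqref{D2Arels2} follows from the analogous computation $\e\e\T-\e\u\T-\u\e\T+\Yb^{\u\u}=\v\v\T+\sum_k\b^k(\b^k)\T=\v\v\T+\sum_k\c^k(\c^k)\T=\Yb^{\v\v}$, and \eqref{D2Arels3} from $\u\e\T-\Yb^{\u\u}=\u\v\T-\sum_k\b^k(\b^k)\T=\u\v\T+\sum_k\b^k(\c^k)\T=\Yb^{\u\v}$.

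The remaining diagonal identities require the linear constraints directly. For \eqref{D2Arels4} I observe that $\Yb\ge\Ob$ forces $\diag(\Yb^{\x\v})\ge\o$ componentwise, so the equality $\e\T\diag(\Yb^{\x\v})=0$ pins $\diag(\Yb^{\x\v})=\o$; this is arguably the step where the complementarity flavor of \eqref{P2A} is used. Taking the diagonal of \eqref{D2Arels1} together with \eqref{D2Arels4} yields $\diag(\Yb^{\x\u})=\x$, i.e.\ \eqref{D2Arels5}. Taking the diagonal of \eqref{D2Arels2} and using $\diag(\Yb^{\u\u})=\u$ gives $\diag(\Yb^{\v\v})=\e-2\u+\u=\v$, i.e.\ \eqref{D2Arels6}. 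Finally, substituting $\diag(\Yb^{\u\u})=\u$ and $\diag(\Yb^{\v\v})=\v$ into the constraint $\diag(\Yb^{\u\u})+2\diag(\Yb^{\u\v})+\diag(\Yb^{\v\v})=\e$ and applying $\u+\v=\e$ yields \eqref{D2Arels7}. I do not anticipate any genuine obstacle; the main subtlety, as in Lemma~\ref{D1Aprops}, is carrying out the Schur complement cleanly and keeping track of signs when invoking $\b^k=-\c^k$.
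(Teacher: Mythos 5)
Your proposal is correct and follows essentially the same route as the paper: the Schur-complement decomposition of the $3n\times 3n$ block, the deduction $\b^k=-\c^k$ from the constraints $\u+\v=\e$ and $\diag(\Yb^{\u\u})+2\diag(\Yb^{\u\v})+\diag(\Yb^{\v\v})=\e$, the algebraic verification of \eqref{D2Arels1}--\eqref{D2Arels3}, and the use of $\e\T\diag(\Yb^{\x\v})=0$ together with nonnegativity to pin down \eqref{D2Arels4}, from which the remaining diagonal identities follow. Your derivation of \eqref{D2Arels5} by taking the diagonal of \eqref{D2Arels1} is just a repackaging of the paper's identity $\diag(\Yb^{\x\u})+\diag(\Yb^{\x\v})=\x$, so there is no substantive difference.
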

\begin{proof}
Let $\Yb \in \cS^{3n + 1}$ be \eqref{D2A}-feasible. By using the Schur complementation, we obtain
\[
\begin{pmatrix}
    {\mathsf Y}^{\x\x} & {\mathsf Y}^{\x\u} & {\mathsf Y}^{\x\v}\\
    ({\mathsf Y}^{\x\u})\T & {\mathsf Y}^{\u\u} & {\mathsf Y}^{\u\v}\\
    ({\mathsf Y}^{\x\v})\T & ({\mathsf Y}^{\u\v})\T & {\mathsf Y}^{\v\v} \end{pmatrix} = \begin{pmatrix} \x \\ \u \\ \v \end{pmatrix} \begin{pmatrix} \x \\ \u \\ \v \end{pmatrix}\T + \sum\limits_{k \in K} \begin{pmatrix} {\a}^k \\ {\b}^k \\ {\c}^k \end{pmatrix} \begin{pmatrix} {\a}^k \\ {\b}^k \\ {\c}^k \end{pmatrix}\T,
\]
where $K$ is a finite set and $\{{\a}^k,{\b}^k, {\c}^k \} \subset \R^n$ for each $k \in K$. Arguing similarly to the proof of Lemma~\ref{D1Aprops}, we obtain $\b^k = - \c^k$ for each $k \in K$. The relations \eqref{D2Arels1}, \eqref{D2Arels2}, and \eqref{D2Arels3} can be established in a very similar manner. Since $\e\T \diag({\mathsf Y}^{\x\v}) = 0$ and $\Yb \in \cD_{3n+1}$, we obtain \eqref{D2Arels4}. By using $\u + \v = \e$, $\b^k = - \c^k$ for each $k \in K$, and the decomposition above, it is easy to verify that $\diag({\mathsf Y}^{\x\u}) + \diag({\mathsf Y}^{\x\v}) = \x$, which establishes \eqref{D2Arels5} due to \eqref{D2Arels4}. Similarly, we obtain $\diag({\mathsf Y}^{\v\v}) = \e - 2 \, \u + \diag({\mathsf Y}^{\u\u}) = \e - \u = \v$ since $\diag({\mathsf Y}^{\u\u}) = \u$, which yields \eqref{D2Arels6}. Finally, we obtain \eqref{D2Arels7} from $\diag({\mathsf Y}^{\u\u}) = \u$, $ \diag({\mathsf Y}^{\u\u}) + 2 \, \diag({\mathsf Y}^{\u\v}) + \diag({\mathsf Y}^{\v\v}) = \e$, $\u + \v = \e$, and \eqref{D2Arels6}. This completes the proof.
\end{proof}

Once again, we can utilize Lemma~\ref{D2Aprops} to obtain the following optimization problem:

\[
\begin{array}{lllrcl}
 \tag{D2B($\rho$)}\label{D2B}
 &\nu(\textrm{D2B($\rho$)}) := & \min\limits_{\Sb \in \cS^{2n + 1}} & \du{\Qb}{{\mathsf S}^{\x\x}}  & & \\
 && \textrm{s.t.}  & \e\T  \x & = & 1 \\ 
  && & \e\T  \u & = & \rho \\
 && & \du{\Eb}{{\mathsf S}^{\x\x}} & = & 1 \\ 
 && & \du{\Eb}{{\mathsf S}^{\u\u}} & = & \rho^2 \\ 
 && & \diag({\mathsf S}^{\u\u}) & = & \u \\
  && & \diag({\mathsf S}^{\x\u}) & = & \x \\
 && & \x \e\T - \Sb^{\x \u} & \geq & \Ob \\
 && & \e \e\T - \e \u\T - \u \e\T + \Sb^{\u\u} & \geq & \Ob \\
 && & \u \e\T - \Sb^{\u\u} & \geq & \Ob \\  
 && & \Sb:=\begin{pmatrix}
    1 & \x\T & \u\T \\
    \x & {\mathsf S}^{\x\x} & {\mathsf S}^{\x\u} \\
    \u & ({\mathsf S}^{\x\u})\T & {\mathsf S}^{\u\u} \end{pmatrix} &  \in & \cD_{2n+1}.
    \end{array}
\]

Note that \eqref{D2B} has $2 n + 4$ linear equality constraints, $(5/2) n^2 +  (1/2) n $ linear inequality constraints, and a doubly nonnegative constraint in $\cS^{2n + 1}$. Once again, it is worth noticing that \eqref{D2B} contains all the RLT constraints obtained from the inequalities $\x \geq \o$, $\u \geq \o$, and $\u \leq \e$. 

Our next result establishes the equivalence between \eqref{D2A} and \eqref{D2B}.

\begin{proposition} \label{DNNequiv2}
\eqref{D2A} and \eqref{D2B} are equivalent to each other. Therefore, $\nu(\textrm{D2A($\rho$)}) = \nu(\textrm{D2B($\rho$)})$.  
\end{proposition}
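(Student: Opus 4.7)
The plan is to mimic the strategy used for Proposition~\ref{DNNequiv1}. We need to show two inclusions (on the level of feasible regions, preserving the objective): every \eqref{D2B}-feasible $\Sb$ can be lifted to a \eqref{D2A}-feasible $\Yb$ with the same objective, and every \eqref{D2A}-feasible $\Yb$ restricts to a \eqref{D2B}-feasible $\Sb$ with the same objective. Since the objective in both problems depends only on the top-left $\x\x$-block, preservation of the objective value will be automatic in both directions.

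For the direction \eqref{D2B} $\to$ \eqref{D2A}, I would use the linear lift motivated by the identity $\v = \e - \u$: define
\[
\Yb \;=\; \Mb \, \Sb \, \Mb\T, \qquad \Mb \;=\; \begin{pmatrix} 1 & \o\T & \o\T \\ \o & \Ib & \Ob \\ \o & \Ob & \Ib \\ \e & \Ob & -\Ib \end{pmatrix}\, .
\]
Expanding this yields $\Yb^{\x\x}=\Sb^{\x\x}$, $\Yb^{\x\u}=\Sb^{\x\u}$, $\Yb^{\u\u}=\Sb^{\u\u}$, together with $\Yb^{\x\v}=\x\e\T-\Sb^{\x\u}$, $\Yb^{\u\v}=\u\e\T-\Sb^{\u\u}$, $\Yb^{\v\v}=\e\e\T-\e\u\T-\u\e\T+\Sb^{\u\u}$, and the $\v$-row $\v\T=\e\T-\u\T$. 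By construction $\Yb\succeq\Ob$ (congruence), and the three RLT inequalities of \eqref{D2B} plus $\Sb^{\x\x}\geq\Ob$ and $\Sb^{\x\u},\Sb^{\u\u}\geq\Ob$ (which follow from $\Sb\in\cD_{2n+1}$) give $\Yb\geq\Ob$, so $\Yb\in\cD_{3n+1}$. The linear equalities $\e\T\x=1$, $\e\T\u=\rho$, $\u+\v=\e$, $\langle\Eb,\Yb^{\x\x}\rangle=1$, $\langle\Eb,\Yb^{\u\u}\rangle=\rho^2$, and $\diag(\Yb^{\u\u})=\u$ are inherited directly from \eqref{D2B}. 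For the cross-diagonal conditions, I would compute $\diag(\Yb^{\x\v})=\x-\diag(\Sb^{\x\u})=\o$ using $\diag(\Sb^{\x\u})=\x$, which instantly gives $\e\T\diag(\Yb^{\x\v})=0$; similarly $\diag(\Yb^{\u\v})=\u-\diag(\Sb^{\u\u})=\o$ and $\diag(\Yb^{\v\v})=\e-2\u+\diag(\Sb^{\u\u})=\v$, so $\diag(\Yb^{\u\u})+2\diag(\Yb^{\u\v})+\diag(\Yb^{\v\v})=\u+\o+\v=\e$.

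For the direction \eqref{D2A} $\to$ \eqref{D2B}, I would simply let $\Sb$ be the top-left $(2n{+}1)\times(2n{+}1)$ principal submatrix of $\Yb$. Then $\Sb\in\cD_{2n+1}$ (principal submatrices inherit both PSD and nonnegativity), and the equalities $\e\T\x=1$, $\e\T\u=\rho$, $\langle\Eb,\Sb^{\x\x}\rangle=1$, $\langle\Eb,\Sb^{\u\u}\rangle=\rho^2$, $\diag(\Sb^{\u\u})=\u$ are immediate. The extra diagonal constraint $\diag(\Sb^{\x\u})=\x$ and the three RLT inequalities are exactly the content of \eqref{D2Arels5}, \eqref{D2Arels1}, \eqref{D2Arels2}, \eqref{D2Arels3} in Lemma~\ref{D2Aprops}. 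Since the objective value depends only on $\Sb^{\x\x}=\Yb^{\x\x}$, both the feasibility and the equality $\nu(\textrm{D2A($\rho$)})=\nu(\textrm{D2B($\rho$)})$ follow.

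The only delicate point is the first direction: one must not overlook that producing a feasible $\Yb$ requires $\Yb\geq\Ob$ entrywise (not just $\Yb\succeq\Ob$) and requires the scalar constraint $\e\T\diag(\Yb^{\x\v})=0$. Both of these would fail if \eqref{D2B} had merely been the Shor/PSD relaxation without the RLT inequalities and without the additional diagonal condition $\diag(\Sb^{\x\u})=\x$; it is precisely the fact that these have been built into \eqref{D2B} (echoing Lemma~\ref{D2Aprops}) that makes the lift succeed. Apart from this bookkeeping, the proof is mechanical.
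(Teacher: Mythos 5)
Your proposal is correct and follows essentially the same route as the paper's proof: the identical congruence lift $\Yb=\Mb\,\Sb\,\Mb\T$ with the $\v=\e-\u$ block row for the direction \eqref{D2B}$\to$\eqref{D2A}, and restriction to the top-left $(2n{+}1)\times(2n{+}1)$ principal submatrix combined with Lemma~\ref{D2Aprops} for the converse. The only cosmetic difference is that the paper derives $\u\le\e$ (hence $\v\ge\o$) from $\diag(\Sb^{\u\u})=\u$ and $\Sb^{\u\u}-\u\u\T\succeq\Ob$, whereas you read it off the RLT inequalities; both are valid.
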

\begin{proof}
The proof is very similar to that of Proposition~\ref{DNNequiv1}. 
Let $\Sb \in \cS^{2n+1}$ be \eqref{D2B}-feasible. Let us define $\Yb \in \cS^{3n + 1}$ as follows:
\begin{equation} \label{defYfromS}
\Yb = \begin{pmatrix} 1 & \o\T & \o\T \\
\o & \Ib & \o \\ \o & \Ob & \Ib \\ \e & \Ob & - \Ib\end{pmatrix}  \begin{pmatrix}
    1 & \x\T & \u\T \\
    \x & {\mathsf S}^{\x\x} & {\mathsf S}^{\x\u} \\
    \u & ({\mathsf S}^{\x\u})\T & {\mathsf S}^{\u\u} 
    \end{pmatrix} \begin{pmatrix} 1 & \o\T & \o\T \\
\o & \Ib & \o \\ \o & \Ob & \Ib \\ \e & \Ob & - \Ib \end{pmatrix}\T.  
\end{equation}
Therefore,
    \begin{equation} \label{Y_identity}
    \Yb = \begin{pmatrix}
    1 & \x\T & \u\T & \e\T - \u\T  \\
    \x & {\mathsf S}^{\x\x} & {\mathsf S}^{\x\u} & \x \e\T - {\mathsf S}^{\x\u} \\
    \u & ({\mathsf S}^{\x\u})\T & {\mathsf S}^{\u\u} & \u \e\T - {\mathsf S}^{\u\u} \\
    \e - \u & \e\x\T - ({\mathsf S}^{\x\u})\T & \e \u\T - {\mathsf S}^{\u\u} & \e \e\T - \e \u\T - \u \e\T + {\mathsf S}^{\u\u} \\ 
    \end{pmatrix}.
\end{equation}
We claim that $\Yb \in \cS^{3n + 1}$ is (D2A($\rho$))-feasible. Since $\diag({\mathsf S}^{\u\u}) = \u$, and $\Sb^{\u\u} - \u \u\T \succeq 0$, we obtain $\u \leq \e$. Therefore, by \eqref{defYfromS} and \eqref{Y_identity}, we conclude that $\Yb \in \cD_{4n + 1}$. Since $\v = \e -\u$, $\Yb^{\x\x} = \Sb^{\x\x}$, $\Yb^{\x\u} = \Sb^{\x\u}$, and $\Yb^{\u\u} = \Sb^{\u\u}$, the first six sets of equality constraints of (D2A($\rho$)) are satisfied. Consider the seventh set of equality constraints of (D2A($\rho$)):
\[
\e\T \diag(\Yb^{\x\v}) = \e\T \diag(\x \e\T - {\mathsf S}^{\x\u}) = \e\T \left[\x - \diag( {\mathsf S}^{\x\u} )\right] = 0,
\]
where we used $\diag( {\mathsf S}^{\x\u} ) = \x$.
Finally, the last set of equality constraints of (D2A($\rho$)) can be directly verified using \eqref{Y_identity}. Therefore, $\Yb \in \cS^{3n + 1}$ is (D2A($\rho$))-feasible and achieves the same objective function value as $\Sb$ in \eqref{D2B}.

Conversely, let $\Yb \in \cS^{3n + 1}$ be (D2A($\rho$))-feasible. Let $\Sb \in \cS^{2n + 1}$ be given by the top left $3 \times 3$-block of $\Zb$, i.e., 
\begin{equation} \label{defSfromY}
\Sb = \begin{pmatrix}
    1 & \x\T & \u\T \\
    \x & {\mathsf Y}^{\x\x} & {\mathsf Y}^{\x\u} \\
    \u & ({\mathsf Y}^{\x\u})\T & {\mathsf Y}^{\u\u} 
    \end{pmatrix} \, .
\end{equation}
Clearly, $\Sb \in \cD_{2n + 1}$ and satisfies all linear equality and inequality constraints in \eqref{D2B} by Lemma~\ref{D2Aprops}. Furthermore, $\Sb$ has the same objective function value in \eqref{D2B} as that of $\Yb$ in \eqref{D2A}. It follows that $\nu(\textrm{D2A($\rho$)}) = \nu(\textrm{D2B($\rho$)})$.
\end{proof}

A comparison of \eqref{D2A} and \eqref{D2B} reveals that the former 
consists of $3 n + 5$ linear equality constraints and {\bf a doubly nonnegative constraint} in $\cS^{3n+1}$ whereas the latter has $2n + 4$ linear equality constraints, $(5/2) n^2 +  (1/2) n $ linear inequality constraints, and {\bf a doubly nonnegative constraint} in $\cS^{2n + 1}$. Proposition~\ref{DNNequiv2} implies that the lower bound arising from the DNN relaxation of \eqref{CP2A} can be computed by solving a conic optimization problem in a smaller dimension.

\subsection{Comparing the two DNN relaxations}\label{comparDNN}

Now we compare the lower bounds arising from the two DNN relaxations of~\eqref{CP1A} and \eqref{CP2A}.

\begin{proposition} \label{D1AvsD2A}
\eqref{D1A} (or equivalently, \eqref{D1B}) is at least as tight as \eqref{D2A} (or equivalently, \eqref{D2B}), i.e., $$\nu(\textrm{D2A($\rho$)}) = \nu(\textrm{D2B($\rho$)}) \leq  \nu(\textrm{D1A($\rho$)}) = \nu(\textrm{D1B($\rho$)}) \leq \ell_\rho(\Qb)\, .$$ 
\end{proposition}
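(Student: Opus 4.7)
The plan is to decompose the chain into pieces. The two middle equalities are exactly Propositions~\ref{DNNequiv1} and~\ref{DNNequiv2}, and the final bound $\nu(\textrm{D1A($\rho$)}) \le \ell_\rho(\Qb)$ holds because \eqref{D1A} is manufactured from the exact copositive reformulation \eqref{CP1A} by enlarging $\cp{4n+1}$ to $\cD_{4n+1}$, which is a genuine outer approximation. The only substantive task is therefore the central inequality $\nu(\textrm{D2B($\rho$)}) \le \nu(\textrm{D1B($\rho$)})$, and working with the smaller equivalents is what makes this tractable.

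For that inequality I would exhibit a direct feasibility map. Given any \eqref{D1B}-feasible $\Wb \in \cS^{2n+1}$, the natural candidate is simply $\Sb := \Wb$, since both problems use the same $(2n+1)\times(2n+1)$ block pattern indexed by $(1,\x,\u)$. The objective values $\du{\Qb}{\Sb^{\x\x}} = \du{\Qb}{\Wb^{\x\x}}$ trivially coincide, so the entire argument collapses to checking \eqref{D2B}-feasibility of $\Sb$.

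The verification is essentially a checklist against the statement of \eqref{D1B} and Lemma~\ref{D1Bprops}. The equalities $\e\T\x = 1$, $\e\T\u = \rho$, $\du{\Eb}{\Wb^{\x\x}} = 1$, $\du{\Eb}{\Wb^{\u\u}} = \rho^2$, and $\diag(\Wb^{\u\u}) = \u$ appear verbatim in \eqref{D1B}; the inequalities $\x\e\T - \Wb^{\x\u} \ge \Ob$ and $\e\e\T - \e\u\T - \u\e\T + \Wb^{\u\u} \ge \Ob$ are also listed there; the remaining inequality $\u\e\T - \Wb^{\u\u} \ge \Ob$ is \eqref{D1Brels3}; and the DNN membership $\Wb \in \cD_{2n+1}$ is \eqref{D1Brels1}. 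The one constraint of \eqref{D2B} without a literal counterpart in \eqref{D1B} is $\diag(\Wb^{\x\u}) = \x$, but this is precisely the identity derived inside the proof of Lemma~\ref{D1Bprops} from taking diagonals of two \eqref{D1B}-inequalities and using $\diag(\Wb^{\u\u}) = \u$.

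I do not anticipate any real obstacle; the proof is a one-paragraph verification once one observes that the smaller equivalent \eqref{D1B} already encodes, either explicitly or as consequences recorded in Lemma~\ref{D1Bprops}, every defining constraint of \eqref{D2B}. The only subtle point worth flagging in the write-up is that $\diag(\Wb^{\x\u}) = \x$ is an implicit rather than explicit consequence of \eqref{D1B}-feasibility, so the reader should be pointed back to the relevant lines of the proof of Lemma~\ref{D1Bprops}.
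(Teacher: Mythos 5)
Your proof is correct. The only difference from the paper's own argument is where you build the feasibility map for the one substantive inequality: the paper works at the level of the large formulations, taking a \eqref{D1A}-feasible $\Zb\in\cS^{4n+1}$ and restricting it to the principal block indexed by $(1,\x,\u,\v)$ to produce a \eqref{D2A}-feasible $\Yb\in\cS^{3n+1}$ with the same objective value, with all constraints verified via Lemma~\ref{D1Aprops}; you instead work entirely in the reduced $(2n+1)$-dimensional setting, where the map becomes the identity and the verification leans on Lemma~\ref{D1Bprops}. Both routes then invoke Propositions~\ref{DNNequiv1} and~\ref{DNNequiv2} for the two equalities and the relaxation property for the final bound, so the logical skeleton is the same. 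Your variant is arguably the more economical write-up, since nothing has to be extracted from a bigger matrix; its one cosmetic cost, which you correctly flag, is that the constraint $\diag(\Wb^{\x\u})=\x$ of \eqref{D2B} is not among the stated conclusions of Lemma~\ref{D1Bprops} but is only derived inside its proof (from the diagonals of $\x\e\T-\Wb^{\x\u}\ge\Ob$ and $-\e\x\T+(\Wb^{\x\u})\T+\e\u\T-\Wb^{\u\u}\ge\Ob$ together with $\diag(\Wb^{\u\u})=\u$), so a self-contained version should either restate that identity or add it to the lemma.
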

\begin{proof}
Let $\Zb \in \cS^{4n + 1}$ be \eqref{D1A}-feasible. Let $\Yb \in \cS^{3n + 1}$ be given by the top left $4 \times 4$-block of $\Zb$, i.e., 
\[
\Yb = \begin{pmatrix}
    1 & \x\T & \v\T & \u\T\\
    \x & {\mathsf Z}^{\x\x} & {\mathsf Z}^{\x\v} & {\mathsf Z}^{\x\u}\\
    \u & ({\mathsf Z}^{\x\u})\T & {\mathsf Z}^{\u\u} & {\mathsf Z}^{\u\v}\\
    \v & ({\mathsf Z}^{\x\v})\T & ({\mathsf Z}^{\u\v})\T & {\mathsf Z}^{\v\v}\\
    \end{pmatrix}.
\]
We claim that $\Yb \in  \cS^{3n + 1}$ is \eqref{D2A}-feasible. Clearly, $\Yb \in \cD_{3n+1}$ and all constraints of \eqref{D2A} are satisfied by Lemma~\ref{D1Aprops}. Therefore, for every \eqref{D1A}-feasible solution, there exists a corresponding \eqref{D2A}-feasible solution with the same objective function value. By Propositions~\ref{DNNequiv1} and \ref{DNNequiv2}, we conclude that $\nu(\textrm{D2A($\rho$)}) = \nu(\textrm{D2B($\rho$)}) \leq  \nu(\textrm{D1A($\rho$)}) = \nu(\textrm{D1B($\rho$)}) \leq \ell_\rho(\Qb)$.    
\end{proof}

Our next example illustrates that we can have $\nu(\textrm{D2A($\rho$)}) <  \nu(\textrm{D1A($\rho$)})$. i.e., \eqref{D2A} can be strictly weaker than \eqref{D1A}.

\begin{example} \label{D2Aweaker}
Consider the following instance, where $n = 6$ and $\rho = 3$:
    \[\Qb = \begin{pmatrix}
        2.6947 & -0.2028 & -1.1144 & -2.4230 & -2.1633 & 0.7710 \\
        -0.2028 & 5.1998 & 0.5005 & -2.0941 & 0.7828 & -3.3611 \\
        -1.1144 & 0.5005 & 5.2918 & 1.4119 & -1.1526 & -1.9723 \\
        -2.4230 & -2.0941 & 1.4119 & 4.1140 & -0.2025 & 0.3132 \\
        -2.1633 & 0.7828 & -1.1526 & -0.2025 & 7.6645 & -0.0170 \\
        0.7710 & -3.3611 & -1.9723 & 0.3132 & -0.0170 & 3.3040
    \end{pmatrix}.
    \]
For this instance, we have $\nu(\textrm{D2A($\rho$)}) = \nu(\textrm{D2B($\rho$)}) \approx 0.1320 < \nu(\textrm{D1A($\rho$)}) = \nu(\textrm{D1B($\rho$)}) \approx 0.1333$. In addition, we observe that the computed optimal solution $\Sb^*$ of (\textrm{D2B($\rho$)}) is not feasible for (\textrm{D1B($\rho$)}). For instance, $(S^{\x\u}_{31})^* - (S^{\x\x}_{31})^*   \approx -0.00137< 0$.
\end{example}

\iffalse
{\color{red} EAY: We may possibly remove the next Example in light of the following remark.

The next example illustrates a simple feasible solution of (D2B($\rho$)) that is cut off by (D1B($\rho$)).
\begin{example} \label{D2Aweaker2}
Let $n = 3$ and $\rho = 2$. Let 
    \[\x = [0.2, 0.4, 0.4]\T, \quad \u = [0.6, 0.7, 0.7]\T\]
    \[\Sb^{\x\x} = \begin{pmatrix}
        0.1 & 0.07 & 0.03 \\
        0.07 & 0.3 & 0.03 \\
        0.03 & 0.03 & 0.34 \\
    \end{pmatrix},\quad \Sb^{\x\u} = \begin{pmatrix}
        0.2 & 0.06 & 0.14 \\
        0.24 & 0.4 & 0.16 \\
        0.16 & 0.24 & 0.4 \\
    \end{pmatrix},\quad \Sb^{\u\u} = \begin{pmatrix}
        0.6 & 0.3 & 0.3 \\
        0.3 & 0.4 & 0.16 \\
        0.3 & 0.24 & 0.4 \\
    \end{pmatrix}.\]
Then, one can verify that $(\x,\u,\Sb^{\x\x},\Sb^{\x\u},\Sb^{\u\u})$ is feasible for (D2B($\rho$)). However, since $\Sb^{\x\x}_{12} - \Sb^{\x\u}_{12} = 0.01 > 0 $, this solution is not feasible for (D1B($\rho$)). 
\end{example}

\begin{remark}
In this example, if we solve the (D1B($\rho$)) with the same $\x$ and $\Sb^{\x\x}$, then we still can find a feasible solution. I am not sure whether we can find a simpler solution than Example~\ref{D2Aweaker2} with $(\x,\u,\Sb^{\x\x},\Sb^{\x\u},\Sb^{\u\u})$ which is feasible for (D2B($\rho$)), but we cannot construct a feasible solution for (D1B($\rho$)) with the same $\x$ and $\Sb^{\x\x}$. 
\end{remark}

}
\fi

\section{Generating nontrivial instances of sparse StQPs}\label{generation}

In this section, we discuss procedures for generating nontrivial instances of the sparse StQP problem \eqref{sStQP}. To this end, we say that an instance of \eqref{sStQP} is {\em nontrivial} if 
the sparsity constraint is violated by each optimal solution of the corresponding instance of StQP given by \eqref{StQP} (or, equivalently, \eqref{sStQP} with $\rho = n$). For a nontrivial instance of the sparse StQP, it follows that the sparsity constraint cuts off all optimal solutions of the corresponding StQP instance without the sparsity constraints.

We aim to construct a nontrivial instance of \eqref{sStQP} in two steps. First, we discuss how to construct an instance of \eqref{StQP} with a prespecified \emph{unique} optimal solution. We then choose the sparsity parameter $\rho$ to be strictly smaller than the sparsity of the designated unique optimal solution of \eqref{StQP}. It follows that the resulting instance of \eqref{sStQP} is nontrivial since the unique optimal solution of \eqref{StQP} is cut off by the sparsity constraint. 

In Section~\ref{Sec_stqp_unique_sol}, we consider generating an instance of \eqref{StQP} with a unique optimal solution. We then present simple algorithms for generating nontrivial instances of \eqref{sStQP} in Section~\ref{SecNontrivialsStQP}.

\subsection{Generating StQP instances with a unique optimal solution} \label{Sec_stqp_unique_sol}

Let us next focus on generating an instance of \eqref{StQP} with a unique optimal solution. To that end, we first recall that an StQP can be reformulated as the following copositive optimization problem~\cite{BomzeDKRQT00}:
\[
\ell_n(\Qb) = \min\limits_{\x \in \R^n} \left\{\x\T  \Qb \x: \x \in F_n\right\} = \min\limits_{\Xb \in \cS^n} \left\{\langle \Qb, \Xb \rangle: \langle \Eb, \Xb \rangle\ = 1, \quad \Xb \in {\cal CP}_n \right\} \, ,
\]
where $F_n$ is given by \eqref{def_F_n} and $\Eb = \e \e\T \in \cS^n$.

The doubly nonnegative (DNN) relaxation of an StQP is therefore given by
\begin{equation} \label{stqp_dnn}
\mu(\Qb) = \min\limits_{\Xb \in \cS^n} \left\{\langle \Qb, \Xb \rangle: \langle \Eb, \Xb \rangle\ = 1, \quad \Xb \in {\cal D}_n \right\} \,.   
\end{equation}

Note that $\mu(\Qb) \leq \ell_n(\Qb)$. For a given instance of StQP, we say that its DNN relaxation is \emph{exact} if $\mu(\Qb) = \ell_n(\Qb)$ and \emph{inexact} otherwise.

We will review the following useful results, which will play a fundamental role in our instance construction procedures.

\begin{theorem} \label{stqp_opt_dnn}
Let $\x \in F_n$, where $F_n$ is given by \eqref{def_F_n}. 
\begin{enumerate}
    \item[(i)] $\x$ is a globally optimal solution of \eqref{StQP} if and only if there exist $\Mb \in \mathbf{bd} ~{\cal COP}_n$ and $\lambda \in \R$ such that $\x \in \mathbf{V}^\Mb$ and $\Qb = \Mb + \lambda \, \Eb$, where $\mathbf{bd} ~{\cal COP}_n$ and $\mathbf{V}^\Mb$ are given by \eqref{cop_bd} and \eqref{def_M_zeros}, respectively. Furthermore, in this case, $\lambda$ is the optimal value of \eqref{StQP}, and the set of optimal solutions of \eqref{StQP} is given by $\mathbf{V}^\Mb$.
    \item[(ii)] $\x$ is a globally optimal solution of \eqref{StQP} and its doubly nonnegative relaxation is exact if and only if there exist $\Rb \succeq \Ob$, $\Nb \geq \Ob$, and $\lambda \in \R$ such that $\x\T \Nb \x = 0$ and $\Qb = (\Ib - \e \x\T) \Rb (\Ib - \x \e\T) + \Nb + \lambda \, \Eb$. 
    \item[(iii)] $\x$ is a globally optimal solution of \eqref{StQP} and its doubly nonnegative relaxation is inexact if and only if there exist $\Mb \in \mathbf{bd} ~{\cal COP}_n \backslash {\cal SPN}_n$ and $\lambda \in \R$ such that $\x \in \mathbf{V}^\Mb$ and $\Qb = \Mb + \lambda \, \Eb$, where $\mathbf{bd} ~{\cal COP}_n$ and $\mathbf{V}^\Mb$ are given by \eqref{cop_bd} and \eqref{def_M_zeros}, respectively. 
\end{enumerate}
\end{theorem}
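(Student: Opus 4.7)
The plan for part (i) is to exploit the translation-invariance of the quadratic form on $F_n$: since $\x\T\Eb\x = (\e\T\x)^2 = 1$ on $F_n$, for any $\lambda \in \R$ and $\Mb := \Qb - \lambda\Eb$ we have $\x\T\Qb\x = \x\T\Mb\x + \lambda$ for every $\x \in F_n$. The ``if'' direction is then immediate: if $\Mb \in \mathbf{bd}~{\cal COP}_n$, then $\x\T\Mb\x \geq 0$ on $F_n$ with equality precisely when $\x \in \mathbf{V}^\Mb$, so the minimum of $\x\T\Qb\x$ over $F_n$ equals $\lambda$ and is attained exactly on $\mathbf{V}^\Mb$, giving the last assertion at once. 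For the converse, set $\lambda := \ell_n(\Qb) = \x\T\Qb\x$ and $\Mb := \Qb - \lambda\Eb$. Then $\y\T\Mb\y \geq 0$ for every $\y \in F_n$, which extends by positive homogeneity to all of $\R^n_+$, so $\Mb \in {\cal COP}_n$; and $\x\T\Mb\x = 0$ places $\x$ in $\mathbf{V}^\Mb$, whence $\Mb \in \mathbf{bd}~{\cal COP}_n$.

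For part (ii), the plan is to combine (i) with conic duality between the DNN relaxation \eqref{stqp_dnn} and its dual
\begin{equation*}
\max\{\lambda \in \R : \Qb - \lambda\Eb \in {\cal SPN}_n\},
\end{equation*}
which rests on the identity $({\cal D}_n)^* = {\cal SPN}_n$. A standard Slater argument (e.g., using $\Xb \propto \Ib + \Eb$) ensures strong duality, so the two optimal values agree. For the ``only if'' direction, DNN-exactness combined with (i) forces $\Mb := \Qb - \lambda\Eb \in {\cal SPN}_n$, i.e., $\Mb = \Pb + \Nb$ with $\Pb \succeq \Ob$ and $\Nb \geq \Ob$. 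From $0 = \x\T\Mb\x = \x\T\Pb\x + \x\T\Nb\x$ and nonnegativity of each summand, both vanish; for PSD $\Pb$, $\x\T\Pb\x = 0$ is equivalent to $\Pb\x = \o$. Since $\e\T\x = 1$, a direct expansion yields $(\Ib - \e\x\T)\Pb(\Ib - \x\e\T) = \Pb$, so choosing $\Rb := \Pb$ produces the claimed representation of $\Qb$. The ``if'' direction proceeds by verification: the matrix $\Mb := (\Ib - \e\x\T)\Rb(\Ib - \x\e\T) + \Nb$ lies in ${\cal SPN}_n$ (the first summand is a congruence of a PSD matrix), and $(\Ib - \x\e\T)\x = \x - \x(\e\T\x) = \o$ together with $\x\T\Nb\x = 0$ gives $\x\T\Mb\x = 0$, so (i) yields both optimality of $\x$ and $\ell_n(\Qb) = \lambda$. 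Exactness of the DNN relaxation follows by sandwiching: $\x\x\T$ is primal-feasible with value $\lambda$, while the SPN representation supplies a dual-feasible point with value $\lambda$.

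Part (iii) is then immediate from (i) and (ii): given an optimal $\x$, (i) supplies $\Mb \in \mathbf{bd}~{\cal COP}_n$ with $\Qb = \Mb + \lambda\Eb$ and $\x \in \mathbf{V}^\Mb$, and by the contrapositive of (ii) the relaxation is inexact exactly when this $\Mb$ fails to admit an SPN decomposition, i.e.\ $\Mb \in \mathbf{bd}~{\cal COP}_n \setminus {\cal SPN}_n$.

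The main obstacle is making the conic duality in (ii) fully rigorous: one needs $({\cal D}_n)^* = {\cal SPN}_n$ (which relies on the closedness of the Minkowski sum of the PSD cone and the componentwise-nonnegative cone) and a bona fide Slater point to invoke strong duality. The translation-invariance used in (i) and the projector identity $(\Ib - \e\x\T)\Pb(\Ib - \x\e\T) = \Pb$ whenever $\Pb\x = \o$ and $\e\T\x = 1$ are short calculations once the overall strategy is in place.
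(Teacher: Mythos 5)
Your proposal is correct, but note that the paper does not actually prove this theorem: its ``proof'' consists of pointing the reader to \cite{ref:Bomze1997} for part~(i) and to \cite{Goek22} for parts~(ii) and~(iii). What you have written is a self-contained reconstruction of the standard argument behind those references, and it holds together: the shift $\Mb = \Qb - \lambda\,\Eb$ with $\y\T\Eb\y = 1$ on $F_n$ and positive homogeneity gives (i); conic duality between \eqref{stqp_dnn} and $\max\{\lambda : \Qb - \lambda\Eb \in {\cal SPN}_n\}$, the splitting $0 = \x\T\Pb\x + \x\T\Nb\x$ forcing $\Pb\x = \o$, and the projector identity $(\Ib - \e\x\T)\Pb(\Ib - \x\e\T) = \Pb$ give (ii); and (iii) follows formally from (i) and (ii). Two points deserve emphasis. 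First, the only nontrivial external ingredient is $({\cal D}_n)^* = {\cal SPN}_n$, i.e.\ that the Minkowski sum of the PSD cone and the nonnegative cone is closed; you correctly flag this, and it is a known fact, but in a fully rigorous write-up it should be cited or proved, since without closedness dual attainment would only land you in $\mathrm{cl}\,{\cal SPN}_n$. Second, it is worth noting (as your sandwich argument implicitly does) that the ``if'' direction of (ii) needs only weak duality, i.e.\ the trivial inclusion ${\cal SPN}_n \subseteq ({\cal D}_n)^*$, so the Slater/closedness machinery is required only for the ``only if'' direction. With those caveats made explicit, your proof would serve as a complete substitute for the citations.
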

\begin{proof}
The reader is referred to \cite{ref:Bomze1997} for (i), and to \cite{Goek22} for (ii) and (iii).    
\end{proof}    

In view of Theorem~\ref{stqp_opt_dnn}, we first give a simple recipe to construct an instance of \eqref{StQP} such that it has a unique optimal solution and its doubly nonnegative relaxation is exact.

\begin{proposition} \label{nontrivial_exact}
Let $\x \in F_n$, where $F_n$ is given by \eqref{def_F_n}. Let $\A = \{j \in \{1,\ldots,n\}: \x_j > 0\}$ and $\B = \{j \in \{1,\ldots,n\}: \x_j = 0\}$. Then, for any $\Rb \succ \Ob$, any $\Nb \in \cS^n$ is such that $\Nb_{\A\A} = \Ob$, $\Nb_{\A\B} \geq \Ob$, and $\Nb_{\B\B} \geq \Ob$, and any $\lambda \in \R$, if $\Qb = (\Ib - \e \x\T) \Rb (\Ib - \x \e\T) + \Nb + \lambda \, \Eb$, then $\x$ is the unique globally optimal solution of \eqref{StQP} and its DNN relaxation is exact.  
\end{proposition}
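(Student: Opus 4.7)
The proof splits naturally into two tasks: verifying global optimality together with exactness of the DNN relaxation, and then upgrading optimality to \emph{uniqueness}. The first part is nearly immediate from Theorem~\ref{stqp_opt_dnn}(ii); the second requires a short direct computation exploiting that $\Rb$ is chosen to be strictly positive definite.

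For the first part, the plan is to check that the hypotheses of Theorem~\ref{stqp_opt_dnn}(ii) are satisfied with the given $(\Rb,\Nb,\lambda)$ and $\x$. The decomposition $\Qb = (\Ib - \e\x\T)\Rb(\Ib - \x\e\T) + \Nb + \lambda\,\Eb$ is assumed. The matrix $\Nb$ is symmetric, has $\Nb_{\A\A} = \Ob$, $\Nb_{\A\B}=\Nb_{\B\A}\T \geq \Ob$, and $\Nb_{\B\B} \geq \Ob$, so $\Nb \geq \Ob$ holds blockwise. Since $x_j = 0$ for $j \in \B$, the product $\x\T \Nb \x$ reduces to $\x_\A\T \Nb_{\A\A}\x_\A = 0$. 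Combined with $\Rb \succ \Ob \succeq \Ob$, Theorem~\ref{stqp_opt_dnn}(ii) then directly yields that $\x$ is a globally optimal solution of \eqref{StQP} and that its DNN relaxation is exact, with optimal value $\lambda$.

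For uniqueness, I would take an arbitrary $\y \in F_n$ and compute $\y\T\Qb\y - \lambda$ explicitly. Since $\e\T\y = 1$, one has $\y\T\Eb\y = 1$ and $(\Ib - \x\e\T)\y = \y - \x$, so
\begin{equation*}
\y\T \Qb \y - \lambda \;=\; (\y - \x)\T \Rb (\y - \x) + \y\T \Nb \y\, .
\end{equation*}
Both summands are nonnegative: the first because $\Rb \succ \Ob$, the second because $\y \geq \o$ and $\Nb \geq \Ob$. Hence $\y\T \Qb \y \geq \lambda$, reconfirming optimality, and equality forces $(\y - \x)\T \Rb (\y - \x) = 0$. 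Because $\Rb$ is \emph{strictly} positive definite, this in turn forces $\y = \x$, establishing uniqueness.

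I do not expect a serious obstacle here. The only point deserving attention is the blockwise verification that $\Nb \geq \Ob$ (using symmetry to handle the $\B\A$ block) and the observation that strict definiteness of $\Rb$ — rather than mere positive semidefiniteness — is what provides the strict inequality needed for uniqueness. If one had allowed $\Rb \succeq \Ob$ only, the null space of $\Rb$ could accommodate additional optimal solutions of the form $\y = \x + \v$ with $\v \in \ker \Rb$, $\e\T\v = 0$, $\y \geq \o$, and $\y\T\Nb\y = 0$; the strict positive definiteness of $\Rb$ rules this out cleanly.
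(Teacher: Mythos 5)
Your proposal is correct and follows essentially the same route as the paper: both invoke Theorem~\ref{stqp_opt_dnn}(ii) for optimality and exactness of the DNN relaxation, and both establish uniqueness via the identity $\y\T \Qb \y = (\y-\x)\T \Rb (\y-\x) + \y\T \Nb \y + \lambda$ for $\y \in F_n$, using strict positive definiteness of $\Rb$ to force $\y = \x$ at equality. Your explicit blockwise check that $\Nb \geq \Ob$ and that $\x\T\Nb\x = 0$ is a slightly more detailed verification of the theorem's hypotheses than the paper spells out, but the argument is the same.
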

\begin{proof}
It follows from the hypothesis and Theorem~\ref{stqp_opt_dnn}(ii) that $\x$ is a globally optimal solution of \eqref{StQP} and its DNN relaxation is exact. We next show the uniqueness. For any $\y \in F_n$, where $F_n$ is given by \eqref{def_F_n}, we have
\[
\y\T \Qb \y = (\y - \x)\T \Rb (\y - \x) + \y\T \Nb \y + \lambda \geq \lambda = \x\T \Qb \x,
\]
where we used $\x \in F_n$, $\y \in F_n$, $\Rb \succ \Ob$, $\y \geq \o$, and $\Nb \geq \Ob$. Since $\Rb \succ \Ob$, the inequality above holds with equality if and only if $\y = \x$. We conclude that $\x$ is the unique globally optimal solution of \eqref{StQP}. 
\end{proof}

In Proposition~\ref{nontrivial_exact}, if one chooses $\Nb = \Ob$ and $\lambda \geq \o$, we obtain $\Qb \succeq \Ob$, which implies that the resulting instance of \eqref{StQP} is a convex optimization problem. On the other hand, by choosing $\Nb$ and $\lambda$ in such a way that $\Qb \not \succeq \Ob$, Proposition~\ref{nontrivial_exact} allows us to construct a nonconvex instance of StQP that admits an exact DNN relaxation. We will utilize both of these observations in our computational experiments. 

Next, we consider constructing an instance of StQP with a unique optimal solution but an inexact DNN relaxation. In contrast with the previous case, such a construction is more involved and requires additional care. Our next result presents such a procedure. 

\begin{proposition} \label{nontrivial_inexact}
Let $\x \in F_n$, where $F_n$ is given by \eqref{def_F_n}. Let $\A = \{j \in \{1,\ldots,n\}: \x_j > 0\}$ and $\B = \{j \in \{1,\ldots,n\}: \x_j = 0\}$. Let $\Qb = (\Ib - \e \x\T) \Rb (\Ib - \x \e\T) + \Nb + \lambda \, \Eb$, where $\lambda \in \R$, $\Rb \in \cS^n$ is such that $\Rb_{\A\A} \succeq \Ob$, $\Rb_{\B\B} \in {\cal COP}_{|\B|}$, $\Rb_{\A\B} = \Ob$, and $\Nb \in \cS^n$ is such that $\Nb_{\A\A} = \Ob$, $\Nb_{\A\B} \geq \Ob$, and $\Nb_{\B\B} \geq \Ob$.
\begin{itemize}
    \item[(i)] $\x$ is a globally optimal solution of \eqref{StQP}. Furthermore, if $\Rb_{\A\A} \succ \Ob$, then $\x$ is the unique globally optimal solution.
    \item[(ii)] If $\Rb_{\B\B} \in {\cal COP}_{|\B|} \backslash {\cal SPN}_{|\B|}$ and $\Nb = \Ob$, there exists an $\epsilon > 0$ such that, for all $\Rb_{\A\A} \succ \Ob$ with $\| \Rb_{\A\A} \| < \epsilon$, where $\|\cdot\|$ denotes the operator norm, 
    $\x$ is the unique globally optimal solution of \eqref{StQP} and its doubly nonnegative relaxation is inexact.
\end{itemize}
\end{proposition}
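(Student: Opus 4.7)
The plan is to derive a clean decomposition of $\y^\top\Qb\y$ for arbitrary $\y\in F_n$ and then exploit the block structure of $\Rb$ and $\Nb$. Since $\e^\top\y=1$, one has $(\Ib-\x\e^\top)\y=\y-\x$ and $\y^\top\Eb\y=1$, giving the identity
\[
\y^\top\Qb\y \;=\; (\y-\x)^\top\Rb(\y-\x) + \y^\top\Nb\y + \lambda.
\]
Setting $\z:=\y-\x$ and splitting into $\z_\A,\z_\B$, the assumption $\Rb_{\A\B}=\Ob$ reduces the first summand to $\z_\A^\top\Rb_{\A\A}\z_\A+\z_\B^\top\Rb_{\B\B}\z_\B$. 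For part (i), $\z_\A^\top\Rb_{\A\A}\z_\A\ge 0$ follows from $\Rb_{\A\A}\succeq\Ob$, while $\z_\B=\y_\B\ge\o$ (because $\x_\B=\o$) combined with $\Rb_{\B\B}\in{\cal COP}_{|\B|}$ gives $\z_\B^\top\Rb_{\B\B}\z_\B\ge 0$; the block conditions on $\Nb$ imply $\Nb\ge\Ob$ entrywise, so $\y^\top\Nb\y\ge 0$. Evaluating the identity at $\y=\x$ yields $\x^\top\Qb\x=\lambda$, and thus $\x$ is globally optimal. For uniqueness under $\Rb_{\A\A}\succ\Ob$, equality forces $\z_\A=\o$; the simplex relation $\e^\top\z=0$ then yields $\e^\top\z_\B=0$, and nonnegativity of $\z_\B$ forces $\z_\B=\o$, so $\y=\x$.

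For part (ii), with $\Nb=\Ob$ and any $\Rb_{\A\A}\succ\Ob$, part (i) already delivers that $\x$ is the unique global minimizer with value $\lambda$. Setting
\[
\Mb := \Qb-\lambda\Eb = (\Ib-\e\x^\top)\Rb(\Ib-\x\e^\top),
\]
I would invoke Theorem~\ref{stqp_opt_dnn}(iii), which requires $\Mb\in\mathbf{bd}\,{\cal COP}_n\setminus{\cal SPN}_n$ together with $\x\in\mathbf{V}^\Mb$. Copositivity of $\Mb$ follows by applying the same block decomposition to arbitrary $\u\in\R^n_+$ with $\w:=(\Ib-\x\e^\top)\u=\u-(\e^\top\u)\x$: since $\w_\B=\u_\B\ge\o$ and $\Rb_{\A\B}=\Ob$, one has $\u^\top\Mb\u=\w_\A^\top\Rb_{\A\A}\w_\A+\w_\B^\top\Rb_{\B\B}\w_\B\ge 0$. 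The inclusion $\x\in\mathbf{V}^\Mb$ is immediate from $(\Ib-\x\e^\top)\x=\o$, placing $\Mb$ on the boundary of ${\cal COP}_n$.

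The crux is to show $\Mb\notin{\cal SPN}_n$ for sufficiently small $\norm{\Rb_{\A\A}}$. A direct expansion, using that $(\Rb\x)_i=0$ for $i\in\B$ (a consequence of $\Rb_{\A\B}=\Ob$ and $\x_\B=\o$), yields the key identity
\[
\Mb_{\B\B} \;=\; \Rb_{\B\B} + (\x_\A^\top\Rb_{\A\A}\x_\A)\,\Eb,
\]
where the trailing $\Eb$ is the $|\B|\times|\B|$ all-ones matrix. Since the ${\cal SPN}_{|\B|}$ property is inherited by principal submatrices (any decomposition $\Mb=\Pb+\Nb'$ with $\Pb\succeq\Ob,\Nb'\ge\Ob$ restricts to one on the $\B\B$ block), $\Mb\in{\cal SPN}_n$ would force $\Mb_{\B\B}\in{\cal SPN}_{|\B|}$. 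If no such $\epsilon>0$ existed, one could extract a sequence $\Rb_{\A\A}^{(k)}\succ\Ob$ with $\norm{\Rb_{\A\A}^{(k)}}\to 0$ and $\Mb^{(k)}\in{\cal SPN}_n$; then $\Mb_{\B\B}^{(k)}\in{\cal SPN}_{|\B|}$ with $\Mb_{\B\B}^{(k)}\to\Rb_{\B\B}$, so closedness of ${\cal SPN}_{|\B|}$ would force $\Rb_{\B\B}\in{\cal SPN}_{|\B|}$, contradicting the standing assumption $\Rb_{\B\B}\in{\cal COP}_{|\B|}\setminus{\cal SPN}_{|\B|}$.

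The main obstacle is precisely this perturbation/closedness step: one must verify that the rank-one, nonnegative perturbation of $\Rb_{\B\B}$ induced by $\Rb_{\A\A}$ does not push the $\B\B$-block into ${\cal SPN}_{|\B|}$. This is where the smallness threshold $\epsilon$ enters, and the argument hinges on the closedness of ${\cal SPN}_{|\B|}$ together with the fact that ${\cal COP}_{|\B|}\setminus{\cal SPN}_{|\B|}$ is nonempty (which requires $|\B|\ge 5$, by Diananda's theorem quoted after~\eqref{cone_inclusion}). Everything else reduces to the block-structural identities for $\Qb$ and $\Mb$, together with routine use of the nonnegativity hypotheses on $\Nb$ and the copositivity of $\Rb_{\B\B}$.
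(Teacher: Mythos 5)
Your proof is correct, and part (i) matches the paper's argument essentially verbatim (the paper routes the conclusion through Theorem~\ref{stqp_opt_dnn}(i) by checking $\Mb\in\mathbf{bd}\,{\cal COP}_n$ and $\mathbf{V}^{\Mb}=\{\x\}$ on $F_n$, whereas you verify the inequality $\y\T\Qb\y\ge\lambda$ directly; these are the same computation). For part (ii) you also arrive at the same pivotal identity $\Mb_{\B\B}=\Rb_{\B\B}+(\x_{\A}\T\Rb_{\A\A}\x_{\A})\,\e_{\B}\e_{\B}\T$, but you finish differently. The paper constructs an explicit dual witness: it picks $\Tb\in{\cal D}_{|\B|}$ with $\langle\Rb_{\B\B},\Tb\rangle=-\delta<0$ (possible since ${\cal D}$ is the dual cone of ${\cal SPN}$), embeds it as $\Ub\in{\cal D}_n$ supported on the $\B\B$ block, and bounds $\langle\Mb,\Ub\rangle\le\|\Rb_{\A\A}\|\mu-\delta$ with $\mu=\e_{\B}\T\Tb\e_{\B}$, yielding the explicit threshold $\epsilon=\delta/\mu$. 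You instead use heredity of the ${\cal SPN}$ property under passage to principal submatrices together with closedness of ${\cal SPN}_{|\B|}$ and a sequential compactness argument. Both are valid (the closedness of ${\cal SPN}_{|\B|}=\cS^{|\B|}_+ + \cN_{|\B|}$ you rely on is standard, e.g.\ because it is the dual cone of ${\cal D}_{|\B|}$, and your perturbation bound $\|\Mb_{\B\B}-\Rb_{\B\B}\|\le c\,\|\Rb_{\A\A}\|$ is uniform, so a single $\epsilon$ does work). The trade-off is that your route is non-constructive: it proves existence of $\epsilon$ without quantifying it, whereas the paper's dual-witness argument produces a computable $\epsilon=\delta/\mu$ that is needed downstream — Steps 5--7 of Algorithm~\ref{Alg2} and the SDP \eqref{separhorn} implement exactly this computation. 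If you intend your argument to support the instance-generation procedure, you would still need to supplement it with the separating-functional construction to obtain a usable value of $\epsilon$.
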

\begin{proof}
\begin{enumerate}
    \item[(i)] Let $\Mb = (\Ib - \e \x\T) \Rb (\Ib - \x \e\T) + \Nb$. By Theorem~\ref{stqp_opt_dnn}(i), it suffices to show that $\Mb \in \mathbf{bd} ~{\cal COP}_n$ and $\x \in \mathbf{V}^\Mb$, where $\mathbf{V}^\Mb$ is given by \eqref{def_M_zeros}. For any $\y \in F_n$, 
\begin{equation} \label{yTMy}
\y\T \Mb \y = (\y - \x)\T \Rb (\y - \x) + \y\T \Nb \y = (\y_{\A} - \x_{\A})\T \Rb_{\A\A} (\y_{\A} - \x_{\A}) +  \y_{\B}\T \Rb_{\B\B} \y_{\B} + \y\T \Nb \y \geq 0 = \x\T \Mb \x,  
\end{equation}
where we used $\y \in F_n$ in the first equality, $\Rb_{\A\B} = \Ob$ and $\x_{\B} = \o$ in the second equality, $\Rb_{\A\A} \succeq \Ob$, $\Rb_{\B\B} \in {\cal COP}_{|\B|}$, $\Nb \geq \Ob$, and $\y \geq \o$ to derive the inequality, and $\x \in F_n$ in the last equality. We conclude that $\Mb \in \mathbf{bd} ~{\cal COP}_n$ and $\x \in \mathbf{V}^\Mb$. By Theorem~\ref{stqp_opt_dnn}(i), $\x$ is a globally optimal solution of \eqref{StQP}. If, in addition, $\Rb_{\A\A} \succ \Ob$, then it follows from \eqref{yTMy} that $\y\T \Mb \y = 0$ if and only if $(\y_{\A} - \x_{\A})\T \Rb_{\A\A} (\y_{\A} - \x_{\A}) =  \y_{\B}\T \Rb_{\B\B} \y_{\B} = \y\T \Nb \y = 0$. Since $\Rb_{\A\A} \succ \Ob$, we conclude that $\y_{\A} = \x_{\A}$. Since $\x \in F_n$, $\x_{\B} = \o$, and $\y_{\A} = \x_{\A}$, we obtain $1 = \e\T \x = \e_{\A} \T \x_{\A} + \e_{\B} \T \x_{\B} = \e_{\A} \T \x_{\A} = \e_{\A} \T \y_{\A}$, which implies that $\y_{\B} = \o$ since $\y \in F_n$. 
It follows that $\y = \x$. Therefore, $\mathbf{V}^\Mb = \{ \x \}$. By Theorem~\ref{stqp_opt_dnn}(i), $\x$ is the unique globally optimal solution of \eqref{StQP}. 

\item[(ii)] Let $\Mb = (\Ib - \e \x\T) \Rb (\Ib - \x \e\T)$. By Theorem~\ref{stqp_opt_dnn}(ii), it suffices to show that $\Mb \in \mathbf{bd} ~{\cal COP}_n \backslash {\cal SPN}_n$ and $\x \in \mathbf{V}^\Mb$. By (i), $\Mb \in \mathbf{bd} ~{\cal COP}_n$ and $\x \in \mathbf{V}^\Mb$. Since $\Rb_{\B\B} \in {\cal COP}_{|\B|} \backslash {\cal SPN}_{|\B|}$, ${\cal CP}_{|\B|}$ and $\cD_{|\B|}$ are the dual cones of ${\cal COP}_{|\B|}$ and ${\cal SPN}_{|\B|}$, respectively, it follows from \eqref{cone_inclusion} that there exists $\Tb \in \cD_{|\B|} \backslash {\cal CP}_{|\B|}$ such that $\langle \Rb_{\B\B}, \Tb \rangle = - \delta < 0$. Let $\Ub \in \cS^n$ be such that $\Ub_{\A\A} = \Ob$, $\Ub_{\A\B} = \Ob$, and $\Ub_{\B\B} = \Tb$. Clearly, $\Ub \in \cD_n$. Furthermore,
\begin{eqnarray*}
\langle \Mb, \Ub \rangle & = & \langle \Mb_{\B\B}, \Tb \rangle \\
 & = & \left\langle \left( \x_{\A}\T \Rb_{\A\A} \x_{\A} \right) \e_{\B} \e_{\B}\T + \Rb_{\B\B}, \Tb \right\rangle \\
 & = & \left( \x_{\A}\T \Rb_{\A\A} \x_{\A} \right) \left( \e_{\B}\T \Tb \e_{\B} \right) - \delta \\
 & \leq & \| \Rb_{\A\A} \| \mu - \delta,
\end{eqnarray*}
where $\mu = \e_{\B}\T \Tb \e_{\B} > 0$ and we used $\| \x_{\A} \| \leq 1$ to derive the last inequality. We conclude that $\langle \Mb, \Ub \rangle < 0$ provided that $\| \Rb_{\A\A} \| < \epsilon$, where $\epsilon = \delta / \mu > 0$. Therefore, for any $\Rb_{\A\A} \succeq \Ob$ such that $\| \Rb_{\A\A} \| < \epsilon$, we obtain $\Mb \in \mathbf{bd} ~{\cal COP}_n \backslash {\cal SPN}_n$ since $\Ub \in \cD_n$. By Theorem~\ref{stqp_opt_dnn}(ii), it follows that the DNN relaxation of \eqref{StQP} is inexact. If, in addition, $\Rb_{\A\A} \succ \Ob$, then $\x$ is the unique globally optimal solution of \eqref{StQP} by part (i). This concludes the proof. 
\end{enumerate}    
\end{proof}

\subsection{Two algorithms for generating nontrivial sparse StQP instances} \label{SecNontrivialsStQP}

In this section, we present two algorithms for generating nontrivial instances of \eqref{sStQP}. Given $\x \in F_n$ with $\|\x\|_0 \geq 2$, both algorithms initially generate an instance of \eqref{StQP} such that $\x$ is the unique optimal solution. Then, a nontrivial instance of \eqref{sStQP} is constructed by choosing $\rho \in \{1,\ldots,\|\x\|_0 - 1\}$. 

The details of our algorithms are presented in Algorithm~\ref{Alg1} and Algorithm~\ref{Alg2}. The two algorithms differ only in terms of whether the StQP instance generated in the first stage admits an exact or inexact DNN relaxation.

\begin{algorithm}[!htb]
\begin{algorithmic}[1]
\Require $n \geq 2$, $\x \in F_n$ such that $\|\x\|_0 \geq 2$, $\lambda \in \R$
\Ensure A nontrivial instance of \eqref{sStQP}
\State $\A \gets \{j \in \{1,\ldots,n\}: \x_j > 0\}$, $\B \gets \{j \in \{1,\ldots,n\}: \x_j = 0\}$
\State Choose an arbitrary $\Rb \in \cS^n$ such that $\Rb \succ \Ob$.
\State Choose an arbitrary $\Nb \in \cS^n$ such that $\Nb_{\A\A} = \Ob$, $\Nb_{\A\B} \geq \Ob$, and $\Nb_{\B\B} \geq \Ob$. 
\State $\Qb \gets (\Ib - \e \x\T) \Rb (\Ib - \x \e\T) + \Nb + \lambda \, \Eb$
\State Choose an arbitrary $\rho \in \{1,\ldots,\|\x\|_0 - 1\}$. 
\end{algorithmic}
\caption{Nontrivial instance of \eqref{sStQP} with \eqref{StQP} admitting an exact DNN relaxation}
\label{Alg1}
\end{algorithm}

Our first algorithm, given by Algorithm~\ref{Alg1}, requires as input $n \geq 2$, $\x \in F_n$ such that $\|\x\|_0 \geq 2$, and $\lambda \in \R$, and generates an instance of \eqref{sStQP}. By Proposition~\ref{nontrivial_exact}, the instance generated by Algorithm~\ref{Alg1} is a nontrivial instance of \eqref{sStQP} such that $\x$ is the unique optimal solution of  \eqref{StQP}, which admits an exact DNN relaxation. We remark that each step of Algorithm~\ref{Alg1} can be implemented in a fairly straightforward way. 

\begin{algorithm}[!htb]
\begin{algorithmic}[1]
\Require $n \geq 7$, $\x \in F_n$ such that $2 \leq \|\x\|_0 \leq n - 5$, $\lambda \in \R$
\Ensure A nontrivial instance of \eqref{sStQP}
\State $\A \gets \{j \in \{1,\ldots,n\}: \x_j > 0\}$, $\B \gets \{j \in \{1,\ldots,n\}: \x_j = 0\}$
\State Construct $\Rb \in \cS^n$ using the following steps.
\State $\Rb_{\A\B} \gets \Ob$
\State Choose an arbitrary $\Rb_{\B\B} \in {\cal COP}_{|\B|} \backslash {\cal SPN}_{|\B|}$.
\State Choose an arbitrary $\Tb \in \cD_{|\B|} \backslash {\cal CP}_{|\B|}$ such that $\langle \Rb_{\B\B}, \Tb \rangle < 0$.
\State $\delta \gets -\langle \Rb_{\B\B}, \Tb \rangle$, $\mu \gets \e_{\B}\T \Tb \e_{\B}$, $\epsilon \gets \frac{\delta}{\mu}$
\State Choose any $\Rb_{\A\A} \succ \Ob$ such that $\| \Rb_{\A\A} \| < \epsilon$. 
\State $\Qb \gets (\Ib - \e \x\T) \Rb (\Ib - \x \e\T) + \lambda \, \Eb$
\State Choose an arbitrary $\rho \in \{1,\ldots,\|\x\|_0 - 1\}$. 
\end{algorithmic}
\caption{Nontrivial instance of \eqref{sStQP} with \eqref{StQP} admitting an inexact DNN relaxation}
\label{Alg2}
\end{algorithm}

Our second algorithm, given by Algorithm~\ref{Alg2}, takes as input $n \geq 7$, $\x \in F_n$ such that $2 \leq \|\x\|_0 \leq n - 5$, and $\lambda \in \R$, and outputs an instance of \eqref{sStQP}. Similarly, it follows from Proposition~\ref{nontrivial_inexact} that the instance generated by Algorithm~\ref{Alg2} is a nontrivial instance of \eqref{sStQP} such that $\x$ is the unique optimal solution of \eqref{StQP}, which admits an inexact DNN relaxation. 

In contrast with Algorithm~\ref{Alg1}, the practical implementation of Algorithm~\ref{Alg2} merits further discussion. Step 4 of Algorithm~\ref{Alg2} requires $\Rb_{\B\B} \in {\cal COP}_{|\B|} \backslash {\cal SPN}_{|\B|}$. Such matrices are called \emph{exceptional}. Recall that ${\cal SPN}_n \subseteq {\cal COP}_n$, and ${\cal SPN}_n = {\cal COP}_n$ if and only if $n \leq 4$~(see \cite{ref:Diananda}). Therefore, such an exceptional matrix exists if and only if $|\B| \geq 5$, which is ensured by the choices of the input parameters of Algorithm~\ref{Alg2}. A well-known exceptional $5 \times 5$ matrix is the Horn matrix given by
\begin{equation} \label{def_horn}
\Hb = \begin{pmatrix} 1 & -1 & 1 & 1 & -1 \\-1 & 1 & -1 & 1 & 1 \\1 & -1 & 1 & -1 & 1 \\1 & 1 & -1 & 	1 & -1 \\ -1 & 1 & 1 & -1 & 1 \end{pmatrix} \in {\cal COP}_5 \backslash {\cal SPN}_5.
\end{equation}
Furthermore, $\Hb$ is an extreme ray of ${\cal COP}_5$~\cite{hall1963copositive}.

We next present a practical procedure for constructing an exceptional matrix in higher dimensions. Our procedure is inspired by a similar procedure outlined in \cite[Section 6.2]{Goek22}. 

Let $n \geq 7$ and let $\x \in F_n$ satisfy $\| \x \|_0 \leq n - 5$, which ensures that $|\B| \geq 5$. We now construct $\Rb_{\B\B} \in {\cal COP}_{|\B|} \backslash {\cal SPN}_{|\B|}$ as follows. 
Choose an arbitrary $\Bb \in {\cal COP}_{|\B| - 5}$, which can, for instance, be ensured by choosing $\Bb \in {\cal SPN}_{|\B| - 5}$ by \eqref{cone_inclusion} (or even $\Bb \geq \Ob$ or $\Bb \succeq \Ob$), and an arbitrary $\Cb \in \R^{(|\B| - 5) \times |\B|}$ such that $\Cb \geq \Ob$. Let us define 
\begin{equation} \label{def_R}
\Rb_{\B\B} = \begin{bmatrix} \Bb & \Cb \\ \Cb\T & \Hb \end{bmatrix} \in \cS^{|\B|}.
\end{equation}
It is easy to see that $\Rb_{\B\B} \in {\cal COP}_{|\B|}$ (see also~\cite[Lemma 3.4\,(a)]{shaked2016spn}). Since $\Hb \in {\cal COP}_5 \backslash {\cal SPN}_5$, there exists $\Fb \in  \cD_5 \backslash {\cal CP}_5$ such that $\langle \Hb, \Fb \rangle = - \delta < 0$. Let 
\begin{equation} \label{def_T}
\Tb = \begin{pmatrix}
    \Ob & \Ob \\
    \Ob & \Fb
\end{pmatrix} \, .
\end{equation}
Note that $\Tb \in {\cal D}_{|\B|}$ and $\langle \Rb_{\B\B}, \Tb \rangle = \langle \Hb, \Fb \rangle = - \delta < 0$, which implies that $\Rb_{\B\B} \not \in {\cal SPN}_{|\B|}$. We conclude that $\Rb_{\B\B} \in {\cal COP}_{|\B|} \backslash {\cal SPN}_{|\B|}$. We then define $\mu = \e_\B\T \Tb \e_\B > 0$ and $\epsilon = \delta / \mu$.  

While any choice of $\Fb \in  \cD_5 \backslash {\cal CP}_5$ such that $\langle \Hb, \Fb \rangle = - \delta < 0$ works in the above construction, it may be preferable in practice to choose $\Fb$ such that the corresponding value of $\epsilon$ is as large as possible. This can be easily achieved in practice by solving the following small semidefinite programming (SDP) problem:
\begin{equation}
    \label{separhorn}
\epsilon := \max\limits_{\Fb \in {\cal D}_5 \backslash \{\Ob\}} \frac{- \langle \Hb, \Fb \rangle}{\e_\B\T \Fb \e_\B} = \max \{ - \langle \Hb, \Fb \rangle: \e_\B\T\Fb\e_\B = 1\, ,\, \Fb\in {\cal D}_{5}\}\, ,
\end{equation}
where the second equality follows from the homogeneity of the objective function. An approximate optimal solution of \eqref{separhorn} is given by 
\begin{equation} \label{def_F}
\Fb = \frac{1}{78.2} \begin{pmatrix} 7 & 4.32 & 0 & 0 & 4.32 \\4.32 & 7 & 4.32 & 0 & 0 \\0 & 4.32 & 7 & 4.32 & 0 \\0 & 0 & 4.32 & 7 & 4.32 \\ 4.32 & 0 & 0 & 4.32 & 7 \end{pmatrix} \, ,
\end{equation}
which yields $\epsilon \approx 0.1049$.

\begin{remark}
The above construction works for any exceptional $\Rb_{\B\B}$ once we can control $\Tb$, the normal separating $\Rb_{\B\B}$ (corresponding to $\Hb$ above) from ${\cal SPN}_{|\B|}$,
in the sense to solve the (effectively small) SDP problem
\begin{equation}
    \label{separsdp}
\epsilon:= \max\limits_{\Tb \in {\cal D}_{|\B|} \backslash \{\Ob\}} \frac{- \langle \Rb_{\B\B}, \Tb \rangle}{\e_\B\T \Tb \e_\B} = \max \{ - \langle \Rb_{\B\B}, \Tb \rangle: \e_\B\T\Tb\e_\B = 1\, ,\, \Tb \in {\cal D}_{|\B|}\}\, .
\end{equation}
Observe that any extremal ray of ${\cal COP}_{|\B|}$ different from a rank-one matrix or a symmetric componentwise nonnegative matrix with exactly two positive entries -- which constitute the extremal rays of the cone of PSD matrices and the cone of componentwise nonnegative matrices -- is necessarily exceptional, by extremality. While for larger $|\B|$, these extremal rays are (yet) unknown, we can imitate the above extension strategy and build upon the known ones for $|\B| \in \{5,6\}$, e.g., using  Hildebrand matrices~\cite{afonin2021extreme,hildebrand2012extreme}. All these used in $\Rb_{\B\B}$ would render~\eqref{separsdp} feasible (recommended only for extremal rays of moderate order to keep this effort low), and proceeding as above with $\Hb$ and $\Fb$, this approach provides more nontrivial examples. Finally, note that extremality is used in this context only to ensure exceptionality; any non-extremal but exceptional matrix would work as well. 

\end{remark}
 
\subsection{Set of instances} \label{CRInst}

 To assess the impact of the choice of the instance set on the solution time of each exact model and each convex relaxation as well as on the quality of the lower bound, we conduct extensive experiments on a variety of carefully constructed instances of \eqref{sStQP}. 
 Let us now describe the set of instances in detail. 

Using Algorithm~\ref{Alg1} and Algorithm~\ref{Alg2}, we generated three sets of nontrivial instances of \eqref{sStQP} with different characteristics.

\begin{enumerate}
    \item[(i)] \emph{PSD Instances: } This set of instances of \eqref{sStQP} is constructed using Algorithm~\ref{Alg1} in such a way that $\Qb \succeq \Ob$, i.e., the corresponding instance of \eqref{StQP} is a convex optimization problem. For instance, such instances may arise in portfolio optimization problems. We remark that the resulting instances of \eqref{sStQP} are, in general, still NP-hard. In Step 2 of Algorithm~\ref{Alg1}, we construct $\Rb \succ \Ob$ so that all of its eigenvalues are uniformly distributed in $(0,3)$. By choosing $\Nb = \Ob$ and $\lambda = 0$, we ensure that $\Qb = (\Ib - \e \x\T) \Rb (\Ib - \x \e\T) \succeq \Ob$. Since $\x\T \Qb \x = 0$, $\Qb$ is PSD but not positive definite. By Proposition~\ref{nontrivial_exact}, each PSD instance satisfies
    \begin{equation} \label{psd_inst_rels}
    \ell_{\rho}(\Qb) > \ell_n (\Qb) = \mu(\Qb) = 0,
    \end{equation}
where $\mu(\Qb)$ is defined as in \eqref{stqp_dnn}.  

\item[(ii)] \emph{SPN Instances: }This set of instances of \eqref{sStQP} is generated using Algorithm~\ref{Alg1} in a similar manner to PSD instances, i.e., we set $\lambda = 0$ and use the same procedure to construct $\Rb$. However, instead of choosing $\Nb = \Ob$ as in PSD instances, each entry of $\Nb_{\A\B} \in \R^{|\A|\times|\B|}$ and $\Nb_{\B\B} \in \cS^{|\B|}$ is uniformly generated in $(0,3)$. In contrast with PSD instances, $\Qb = (\Ib - \e \x\T) \Rb (\Ib - \x \e\T) + \Nb$ is, in general, not PSD. In fact, we numerically verified that $\Qb$ was an indefinite matrix for each SPN instance. By Proposition~\ref{nontrivial_exact}, each SPN instance satisfies
    \begin{equation} \label{spn_inst_rels}
    \ell_{\rho}(\Qb) > \ell_n (\Qb) = \mu(\Qb) = 0.
    \end{equation}

\item[(iii)] \emph{COP Instances: }This set of instances of \eqref{sStQP} is generated by Algorithm~\ref{Alg2}. In Step 2, we construct $\Rb \in \cS^n$ using the procedure outlined in Section~\ref{SecNontrivialsStQP}. In particular, $\Rb_{\B\B}$ is constructed as in \eqref{def_R}, where $\Bb \succ \Ob$ is generated with eigenvalues uniformly distributed in $(0,3)$, and each entry of $\Cb \geq \Ob$ is uniformly distributed in $(0,1)$. Our choices are, in part, motivated by the observation that the eigenvalues of the Horn matrix $\Hb$ given by \eqref{def_horn} lie in $[-1.236,3.236]$ so that the eigenvalues of $\Bb \succ \Ob$ have similar magnitude. The separating matrix $\Tb$ is constructed as in \eqref{def_T}, where $\Fb$ is given by \eqref{def_F}, which yields $\epsilon \approx 0.1049$. Finally, $\Rb_{\A\A} \succ \Ob$ is constructed with eigenvalues uniformly distributed in $(0,0.99 \epsilon)$, ensuring that $\| \Rb_{\A\A} \| < \epsilon$. We then choose $\lambda = 0$ and set $\Qb = (\Ib - \e \x\T) \Rb (\Ib - \x \e\T)$. By Proposition~\ref{nontrivial_inexact}, this procedure ensures that $\Qb \not \succeq \Ob$ and each COP instance satisfies
\begin{equation} \label{cop_inst_rels}
    \ell_{\rho}(\Qb) > \ell_n (\Qb) = 0 > \mu(\Qb).
    \end{equation}
\end{enumerate}

In summary, each of the three sets of instances of \eqref{sStQP} is comprised of nontrivial instances that exhibit different characteristics. While each PSD instance and each SPN instance has a corresponding StQP instance that admits an exact doubly nonnegative (DNN) relaxation, they differ in terms of the convexity of the objective function. On the other hand, each COP instance has a nonconvex objective function with the additional property that the DNN relaxation of the corresponding StQP instance is inexact. 

\section{Computational results} \label{CompRes}

In this section, we report and discuss the results of our computational experiments. After describing 
the experimental setup in Section~\ref{CRSetup}, 
we report our results in detail in Sections~\ref{CRSolTime} and \ref{CRQLB}.

\subsection{Experimental setup} \label{CRSetup}

The set of instances was already described in Section~\ref{CRInst}. Here, we outline the details of our experimental setup. 

In our experiments, we chose $n \in \{25,50\}$. For each choice of $n$, we identified three sparsity levels for the designated optimal solution $\x \in F_n$ of \eqref{StQP}, denoted by $\rho_0 = \|\x\|_0$. In particular, we considered $\rho_0 \in \{\lfloor 0.25 n \rceil, \lfloor 0.5 n \rceil, \lfloor 0.75 n \rceil\}$, where $\lfloor \cdot \rceil$ denotes the nearest integer function. Finally, for each choice of $\rho_0$, we chose $\rho \in \{\lfloor 0.25 \rho_0 \rceil, \lfloor 0.5 \rho_0 \rceil, \lfloor 0.75 \rho_0 \rceil\}$. The parameters are summarized in Table~\ref{tab1}.

\begin{table}[!hbt] 
\begin{center}
\begin{tabular}{ |c|c|c| } 
\hline
$n$ & $\rho_0$ & $\rho$ \\
\hline
\hline
\multirow{3}{1em}{25} & 6 & \{2,3,4\} \\ 
& 12 & \{3,6,9\} \\  
& 19 & \{5,10,14\} \\ 
\hline
\multirow{3}{1em}{50} & 12 & \{3,6,9\} \\ 
& 25 & \{6,12,19\} \\  
& 38 & \{10,19,28\} \\ 
\hline
\end{tabular}
\end{center}
\caption{Choices of the parameters $n, \rho_0$, and $\rho$}
\label{tab1}
\end{table}

For each choice of $n$, $\rho_0$, and $\rho$ presented in Table~\ref{tab1}, we generated 25 instances from each of the three sets of instances described in Section~\ref{CRInst}, which gave rise to a total of 1,350 nontrivial instances of \eqref{sStQP} comprising of 450 PSD instances, 450 SPN instances, and 450 COP instances. For each instance of \eqref{sStQP}, we solved \eqref{P1}, \eqref{P2}, \eqref{D1A}, \eqref{D1B}, \eqref{D2A}, and \eqref{D2B}.

Algorithm~\ref{Alg1} and Algorithm~\ref{Alg2} were implemented in Julia version 1.8.5~\cite{BEKS17}. Each instance of \eqref{P1} and \eqref{P2} was solved by {\tt Gurobi} version 11.0.0~\cite{Gurobi} via the modeling language{\tt JuMP} v1.19.0~\cite{Lubin2023}. We solved \eqref{D1A}, \eqref{D1B}, \eqref{D2A}, and \eqref{D2B}
by {\tt MOSEK} version 10.1.24~\cite{MOSEK} via {\tt JuMP}.  
Our computational
experiments were carried out on a 64-bit HP workstation with 24 threads (2 sockets, 6 cores
per socket, 2 threads per core) running {\tt Ubuntu Linux} with 96 GB of RAM and Intel Xeon
CPU E5-2667 processors with a clock speed of 2.90 GHz. We imposed a time limit of 600 seconds on each solve. The default settings were used for all of the other parameters of {\tt Gurobi} and {\tt MOSEK}.

\subsection{Solution time} \label{CRSolTime}

Now we focus on the solution time of each of the six models \eqref{P1}, \eqref{P2}, \eqref{D1A}, \eqref{D1B}, \eqref{D2A}, and \eqref{D2B}.

\begin{figure}[!hbt]
    \centering
    \begin{subfigure}{0.495\linewidth}
		\includegraphics[width=\linewidth]{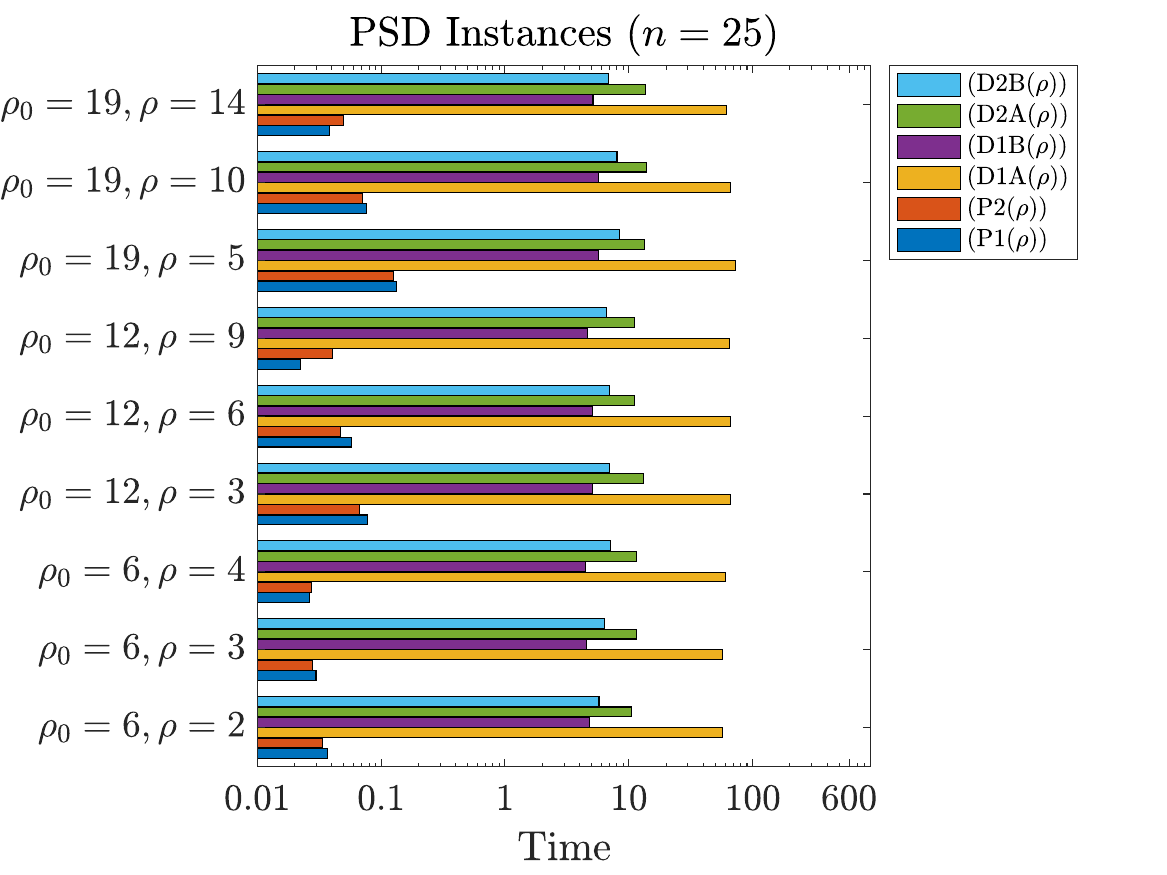}
		\caption{PSD Instances ($n = 25)$}
		\label{fig1apsd}
	\end{subfigure}
	\begin{subfigure}{0.495\linewidth}
		\includegraphics[width=\linewidth]{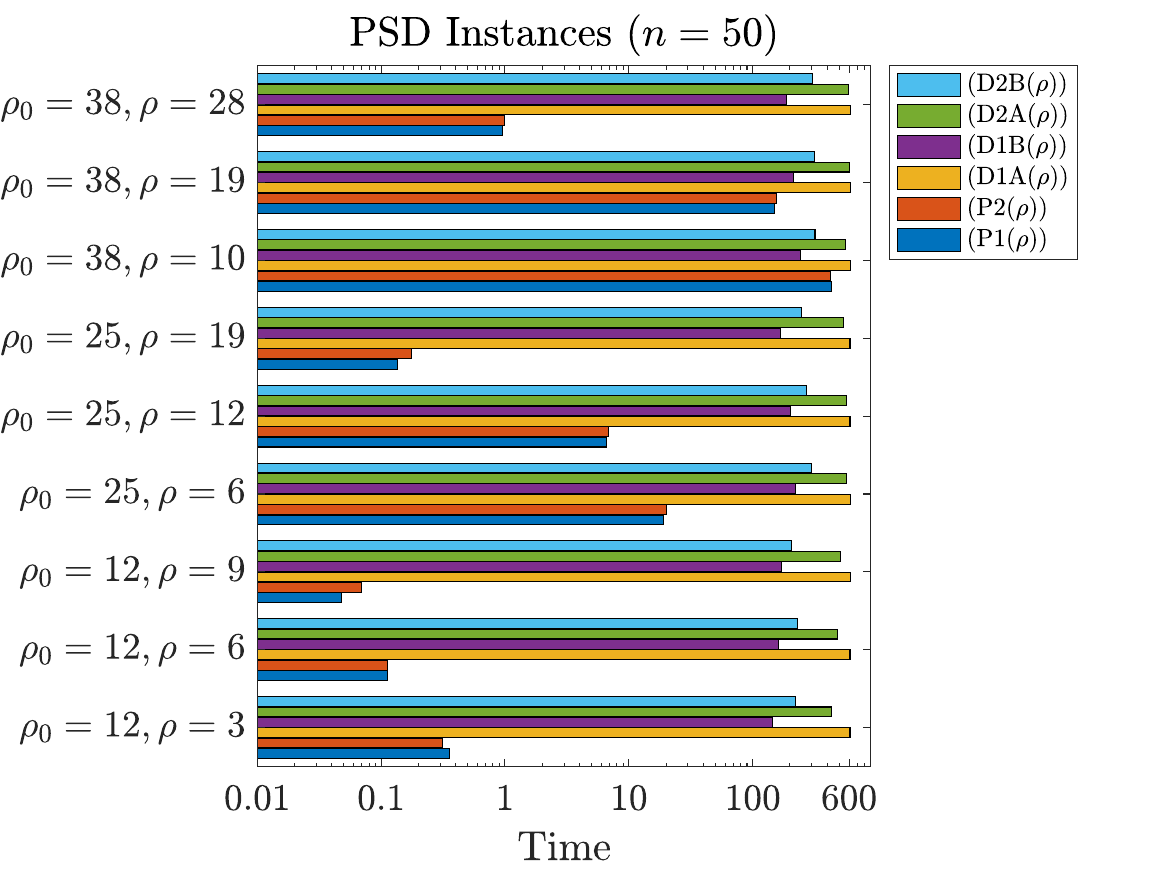}
		\caption{PSD Instances ($n = 50)$}
		\label{fig1bpsd}
	\end{subfigure}
 \\
 \begin{subfigure}{0.495\linewidth}
		\includegraphics[width=\linewidth]{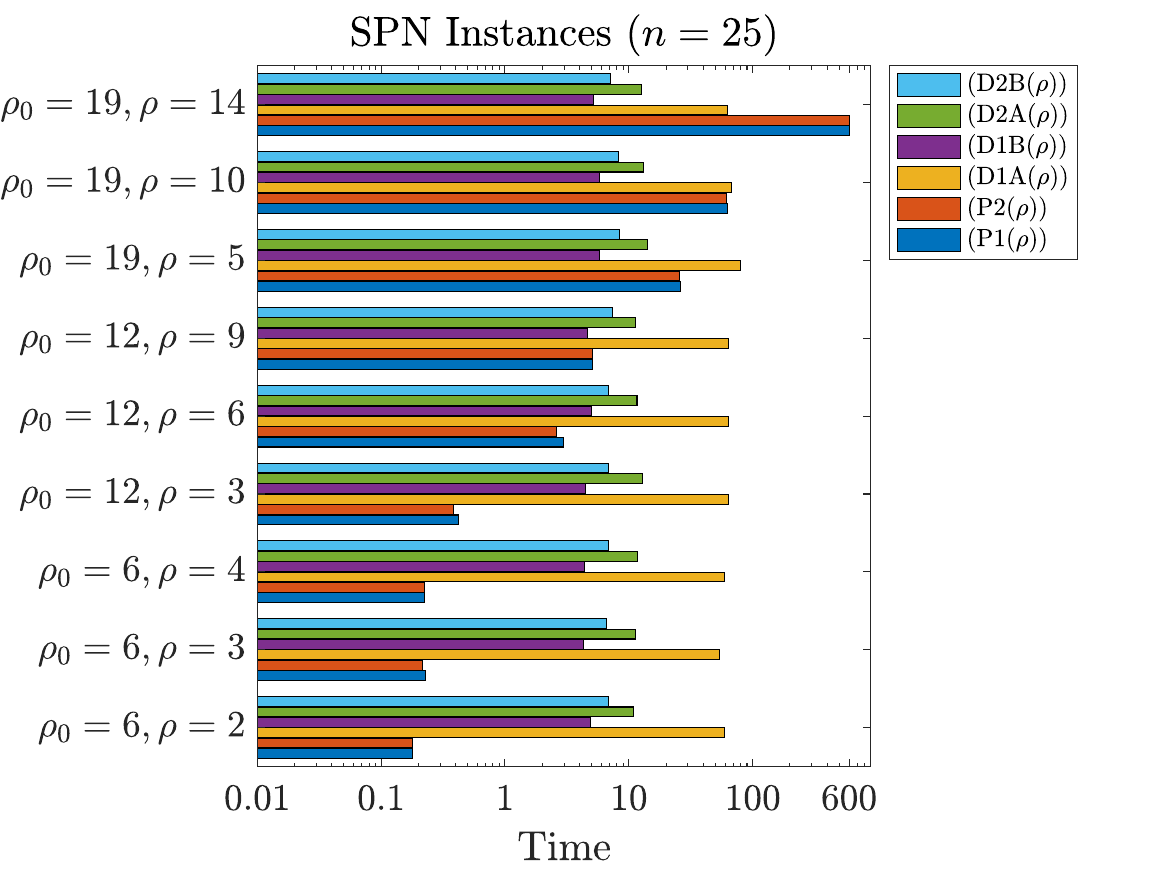}
		\caption{SPN Instances ($n = 25)$}
		\label{fig1aspn}
	\end{subfigure}
	\begin{subfigure}{0.495\linewidth}
		\includegraphics[width=\linewidth]{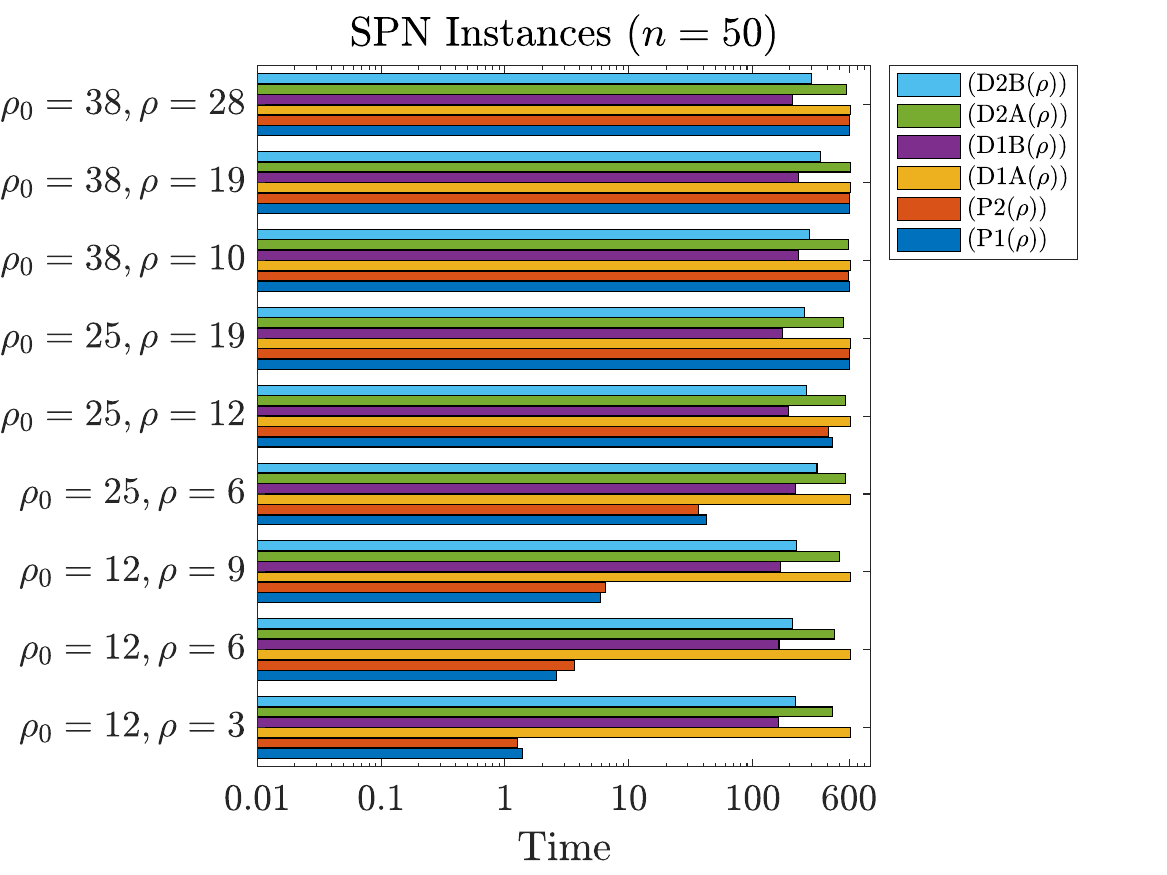}
		\caption{SPN Instances ($n = 50)$}
		\label{fig1bspn}
	\end{subfigure}
 \\
 \begin{subfigure}{0.495\linewidth}
		\includegraphics[width=\linewidth]{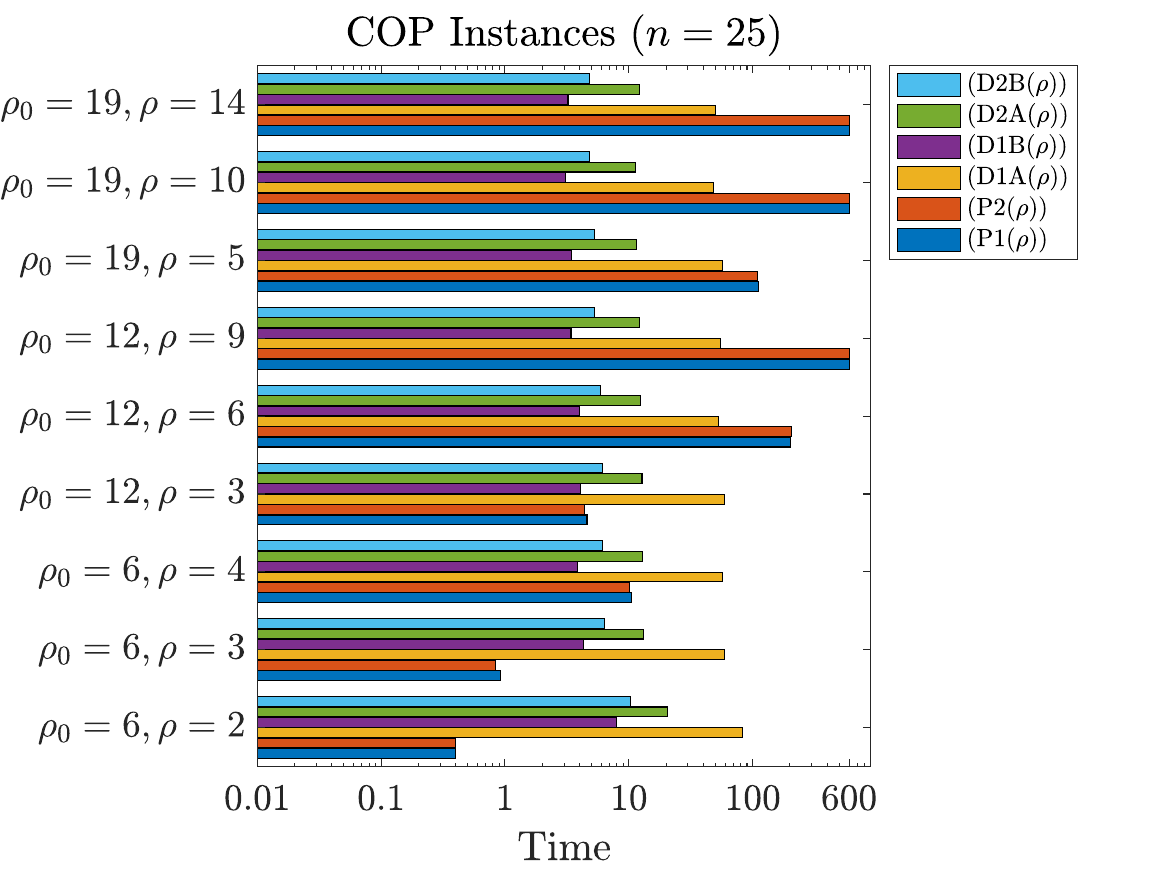}
		\caption{COP Instances ($n = 25)$}
		\label{fig1acop}
	\end{subfigure}
	\begin{subfigure}{0.495\linewidth}
		\includegraphics[width=\linewidth]{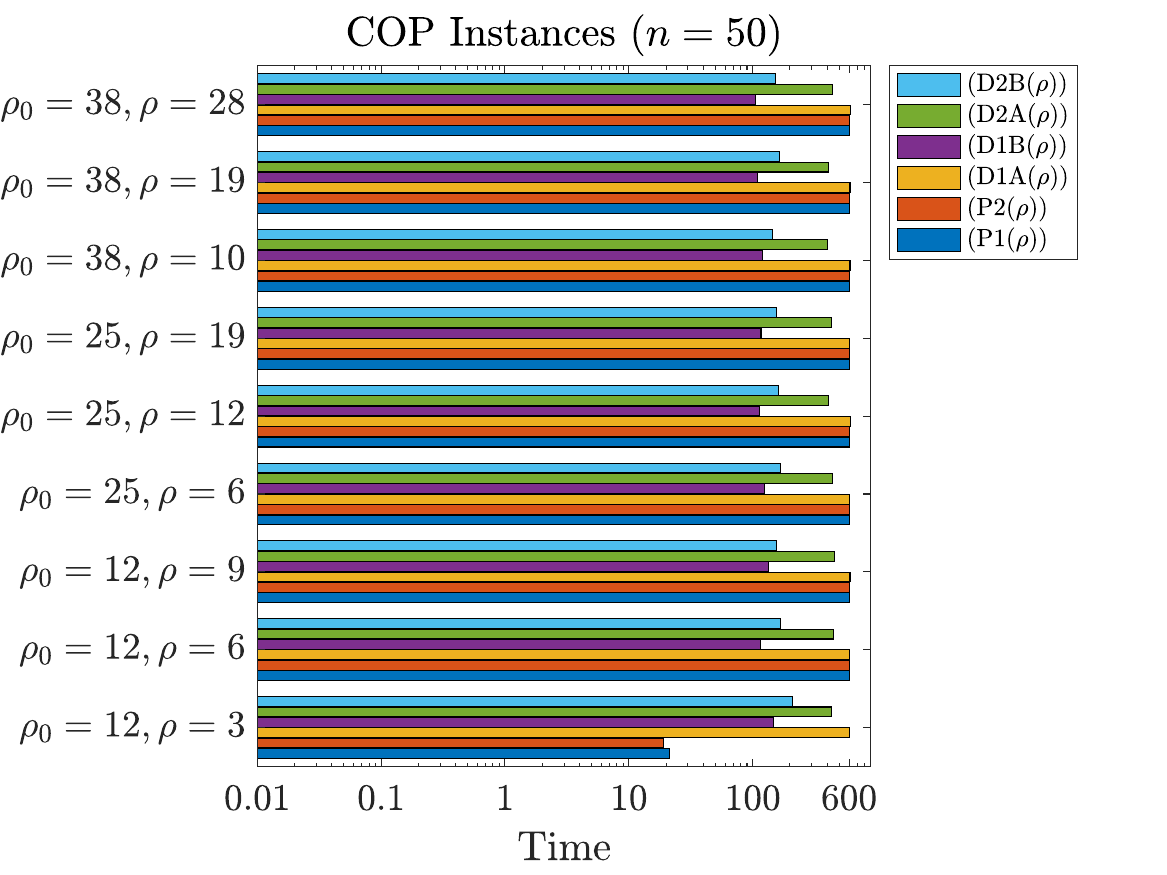}
		\caption{COP Instances ($n = 50)$}
		\label{fig1bcop}
	\end{subfigure}
    \caption{Average solution times (in seconds) of \eqref{P1}, \eqref{P2}, \eqref{D1A}, \eqref{D1B}, \eqref{D2A}, and \eqref{D2B} for PSD, SPN, and COP instances}
    \label{Fig1-PSD-COP}
\end{figure}

Figure~\ref{Fig1-PSD-COP} depicts the average solution times of each of the six models on each of the three instance sets. The results for PSD instances, SPN instances, and COP instances are presented in the first, second, and third rows of Figure~\ref{Fig1-PSD-COP}, respectively. In each row, the results for $n = 25$ and $n = 50$ are given in the first and second column, respectively. In each graph, the horizontal axis denotes the average solution time (in seconds) presented in logarithmic scale. We used identical axis limits in each graph to facilitate a better comparison across different graphs. The vertical axis is comprised of nine sets of bar charts, each of which represents a particular choice of the tuple $(\rho_0,\rho)$ corresponding to the choice of $n$ as outlined in Table~\ref{tab1}. Finally, for each choice of $(\rho_0,\rho)$, the corresponding bar chart represents the average solution times of the six models \eqref{P1}, \eqref{P2}, \eqref{D1A}, \eqref{D1B}, \eqref{D2A}, and \eqref{D2B} over 25 instances generated using the procedure in Section~\ref{CRInst}. Note that the solution times of instances that were terminated due to the time limit of 600 seconds were also included in the average solution times. Therefore, each bar chart represents the average computational requirement of the corresponding model. We recall that \eqref{P1} and \eqref{P2} were solved by {\tt Gurobi} whereas \eqref{D1A}, \eqref{D1B}, \eqref{D2A}, and \eqref{D2B} by {\tt MOSEK}. 

Figure~\ref{Fig1-PSD-COP} reveals several interesting relations about average solution times. We first outline our observations concerning the exact MIQP models \eqref{P1} and \eqref{P2}: 

\begin{itemize}
    \item[(i)] Average solution times of the two exact models exhibit very similar behavior for each fixed choice of the instance set and the triple $(n, \rho_0, \rho)$. However, there seems to be a very strong correlation between the average solution time and the choices of $(n,\rho_0,\rho)$ as well as the instance set. 
    
    \item[(ii)] Considering each row of Figure~\ref{Fig1-PSD-COP} separately, the average solution time of each of the two exact models across all choices of the tuple $(\rho_0,\rho)$ increases as $n$ increases. However, the rate of increase seems to be highly dependent on the choice of the instance set. 
\end{itemize}

We next present the corresponding observations for the average solution times of the convex relaxations 
\eqref{D1A}, \eqref{D1B}, \eqref{D2A}, and \eqref{D2B}:

\begin{itemize}
    \item[(i)] In contrast with the exact models, for each choice of $n$, Figure~\ref{Fig1-PSD-COP} reveals that the average solution times of the convex relaxations 
exhibit very similar behavior across all three sets of instances and all choices of $(\rho_0,\rho)$. 

\item[(ii)] The average solution time of our reduced formulation \eqref{D1B} consistently achieves the lowest average solution times, followed by the reduced formulation \eqref{D2B}, which, in turn, is followed by \eqref{D2A} and \eqref{D1A}, respectively. 

\item[(iii)] The average solution time of our reduced formulation \eqref{D1B} exhibits at least an order of magnitude improvement over that of \eqref{D1A}. On the other hand, the corresponding improvement of \eqref{D2B} over \eqref{D2A} seems to be less pronounced, especially in smaller dimensions.

 \item[(iv)] As expected, the average solution time of each relaxation increases with $n$. While the rate of increase does not seem to be influenced by the specific instance set, we observe that it does depend on the specific relaxation.
\end{itemize}

%%%%% All Models - Solution Time %%%%

\begin{figure}[!hbt]
    \centering
    \begin{subfigure}{0.495\linewidth}
		\includegraphics[width=\linewidth]{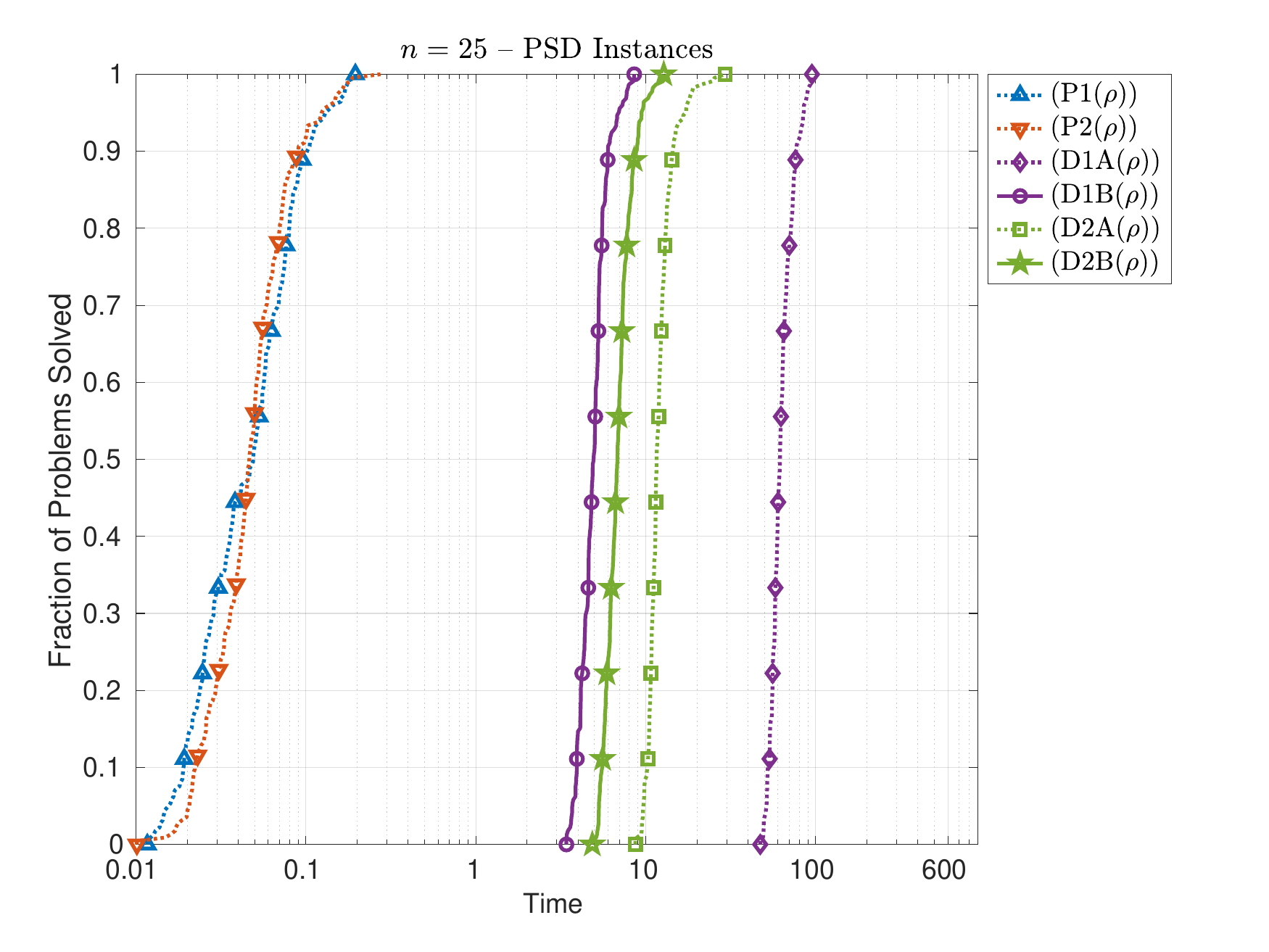}
		\caption{PSD Instances ($n = 25$)}
		\label{fig3a}
	\end{subfigure}
	\begin{subfigure}{0.495\linewidth}
  \includegraphics[width=\linewidth]{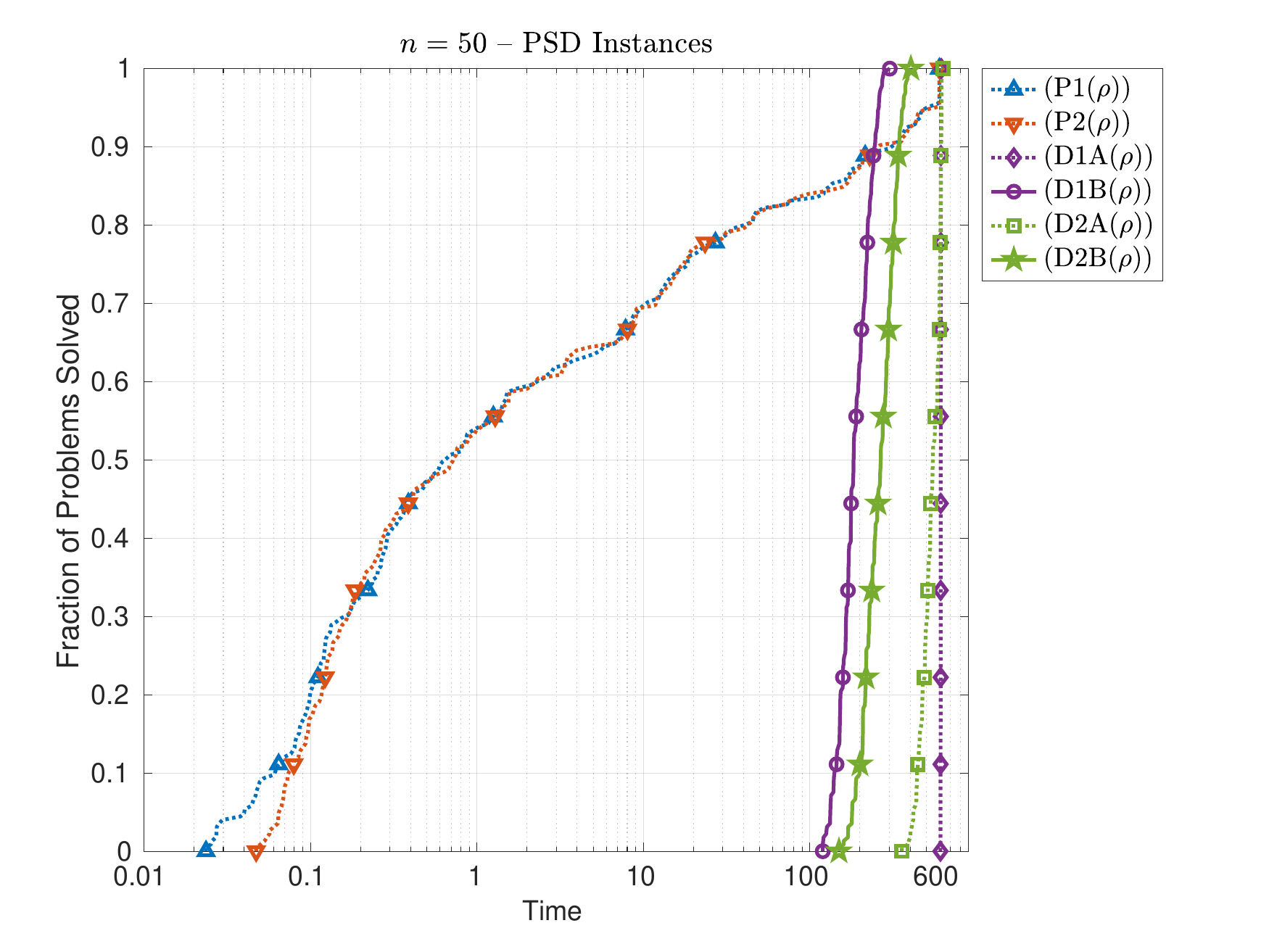}
		\caption{PSD Instances ($n = 50$)}
		\label{fig3b}
	\end{subfigure}
 \\
 \begin{subfigure}{0.495\linewidth}
		\includegraphics[width=\linewidth]{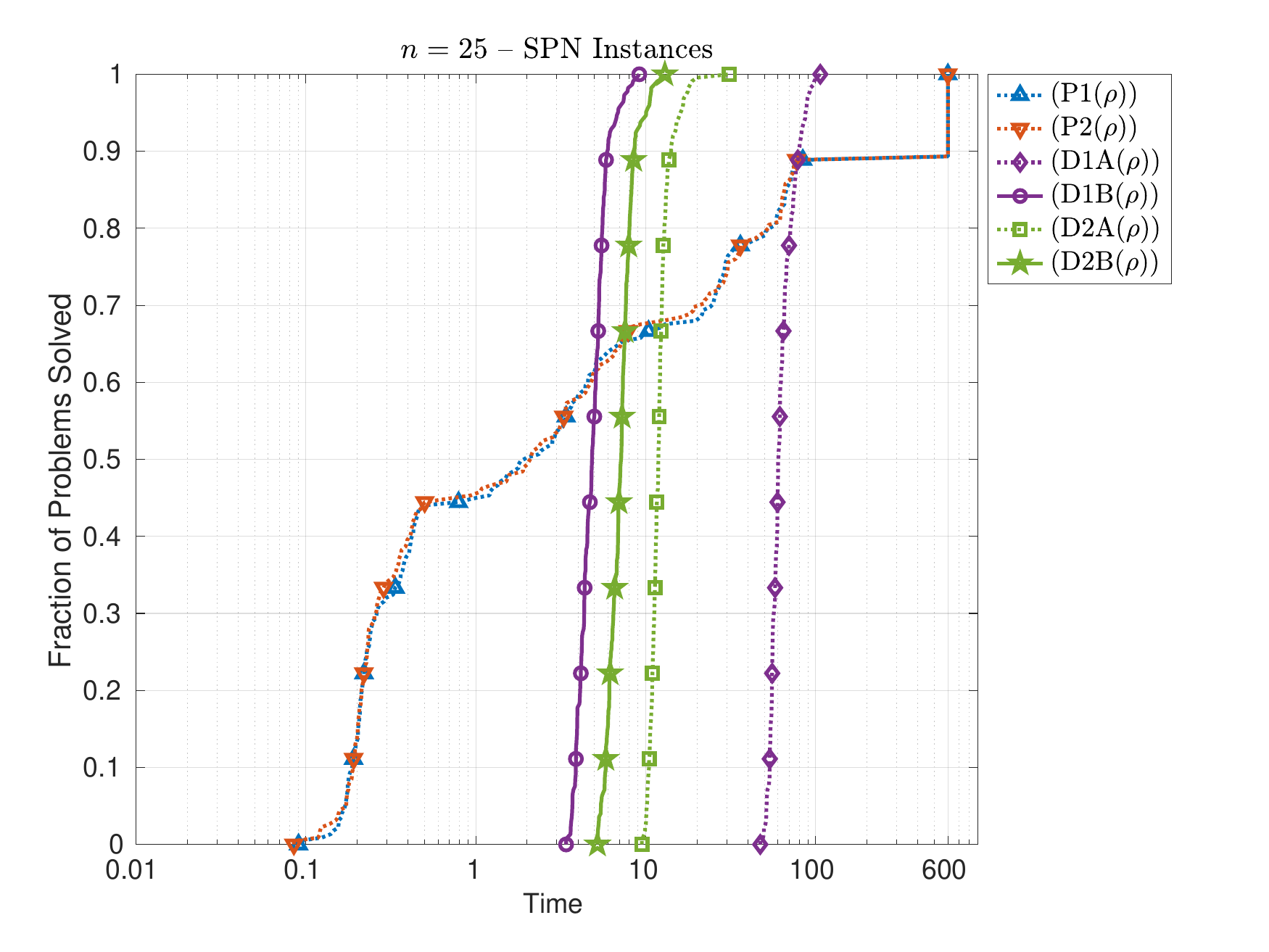}
		\caption{SPN Instances ($n = 25$)}
		\label{fig3c}
	\end{subfigure}
	\begin{subfigure}{0.495\linewidth}
		\includegraphics[width=\linewidth]{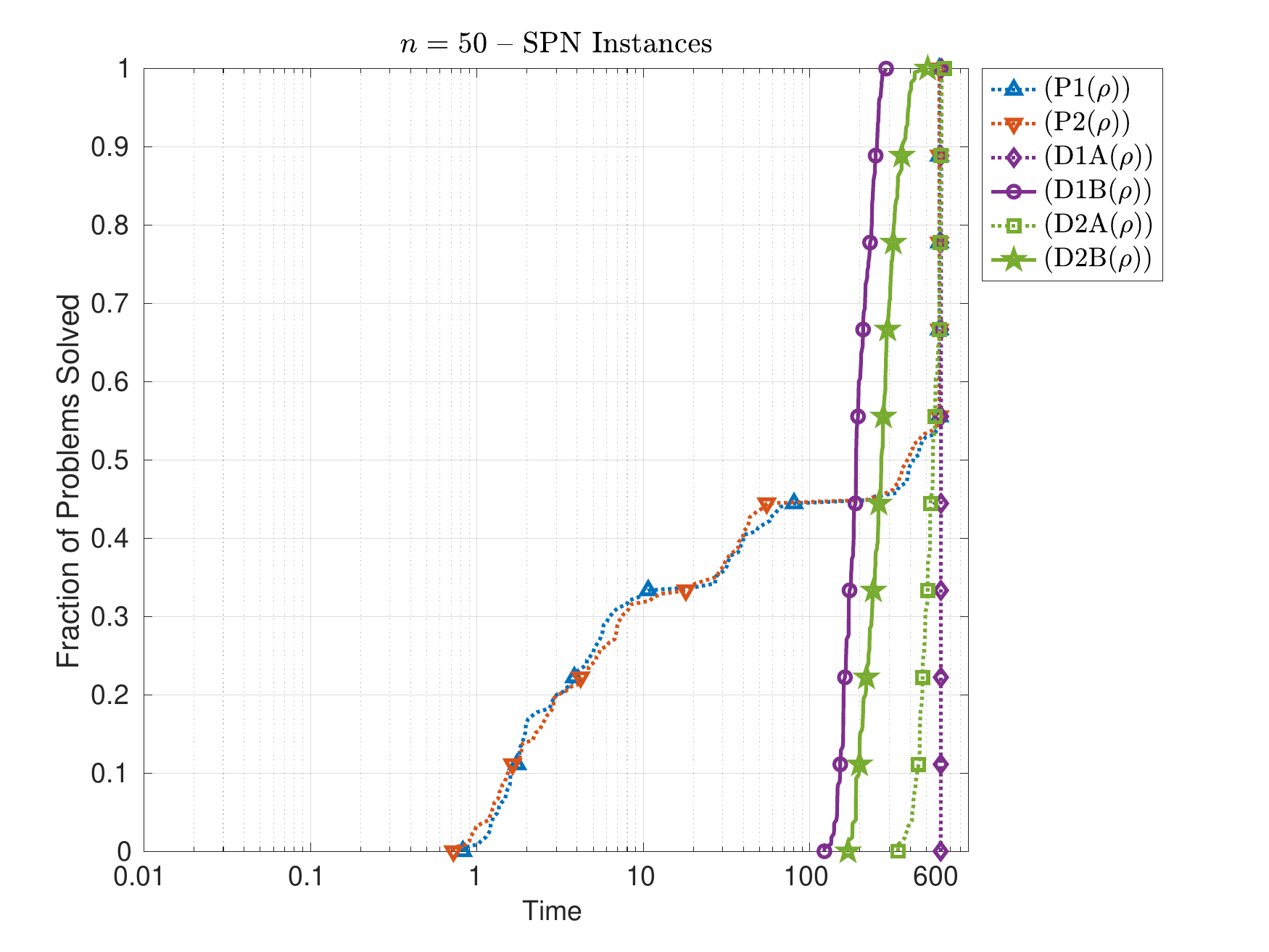}
		\caption{SPN Instances ($n = 50$)}
		\label{fig3d}
	\end{subfigure}
 \\
  \begin{subfigure}{0.495\linewidth}
		\includegraphics[width=\linewidth]{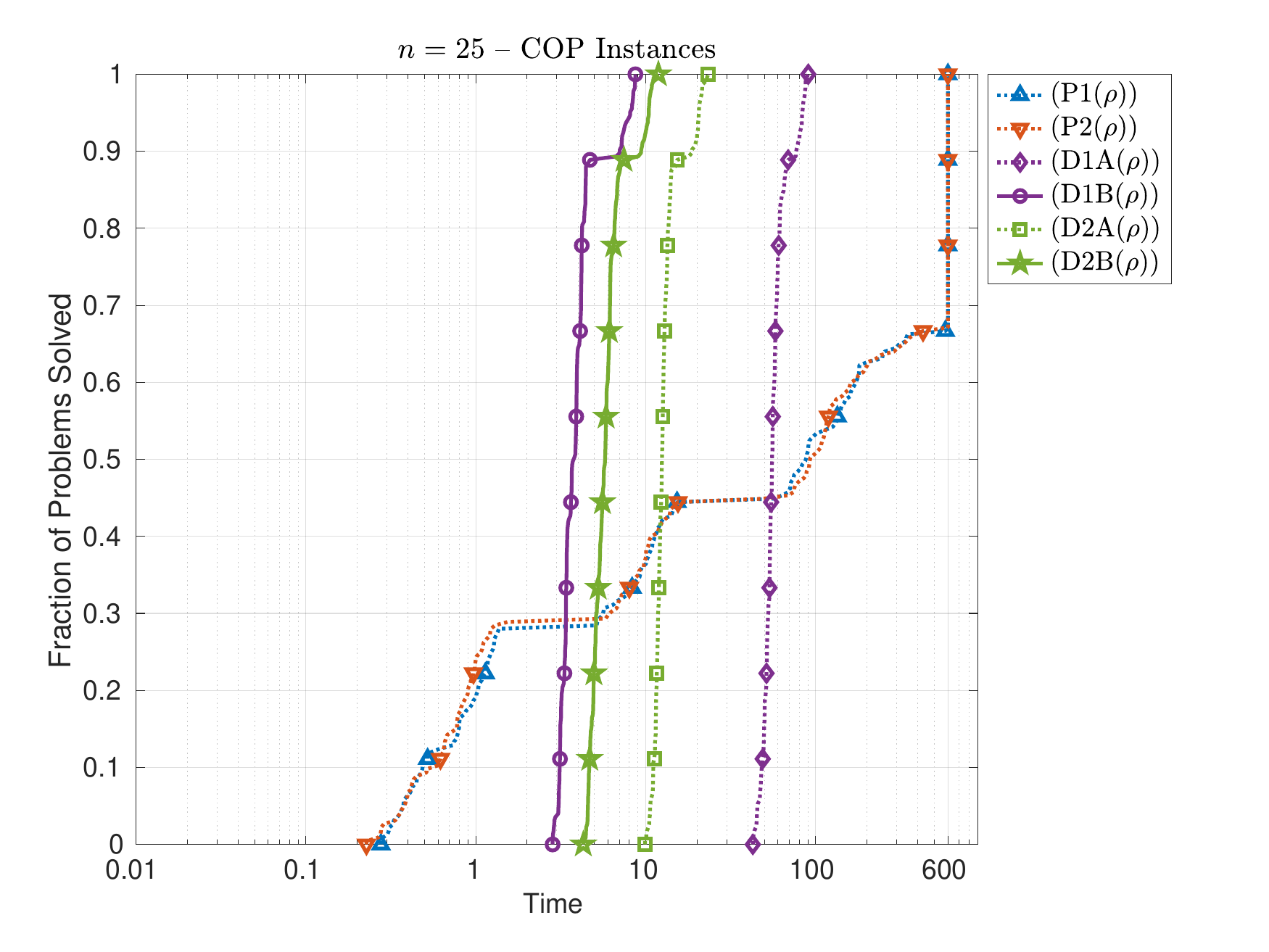}
		\caption{COP Instances ($n = 25$)}
		\label{fig3e}
	\end{subfigure}
	\begin{subfigure}{0.495\linewidth}
		\includegraphics[width=\linewidth]{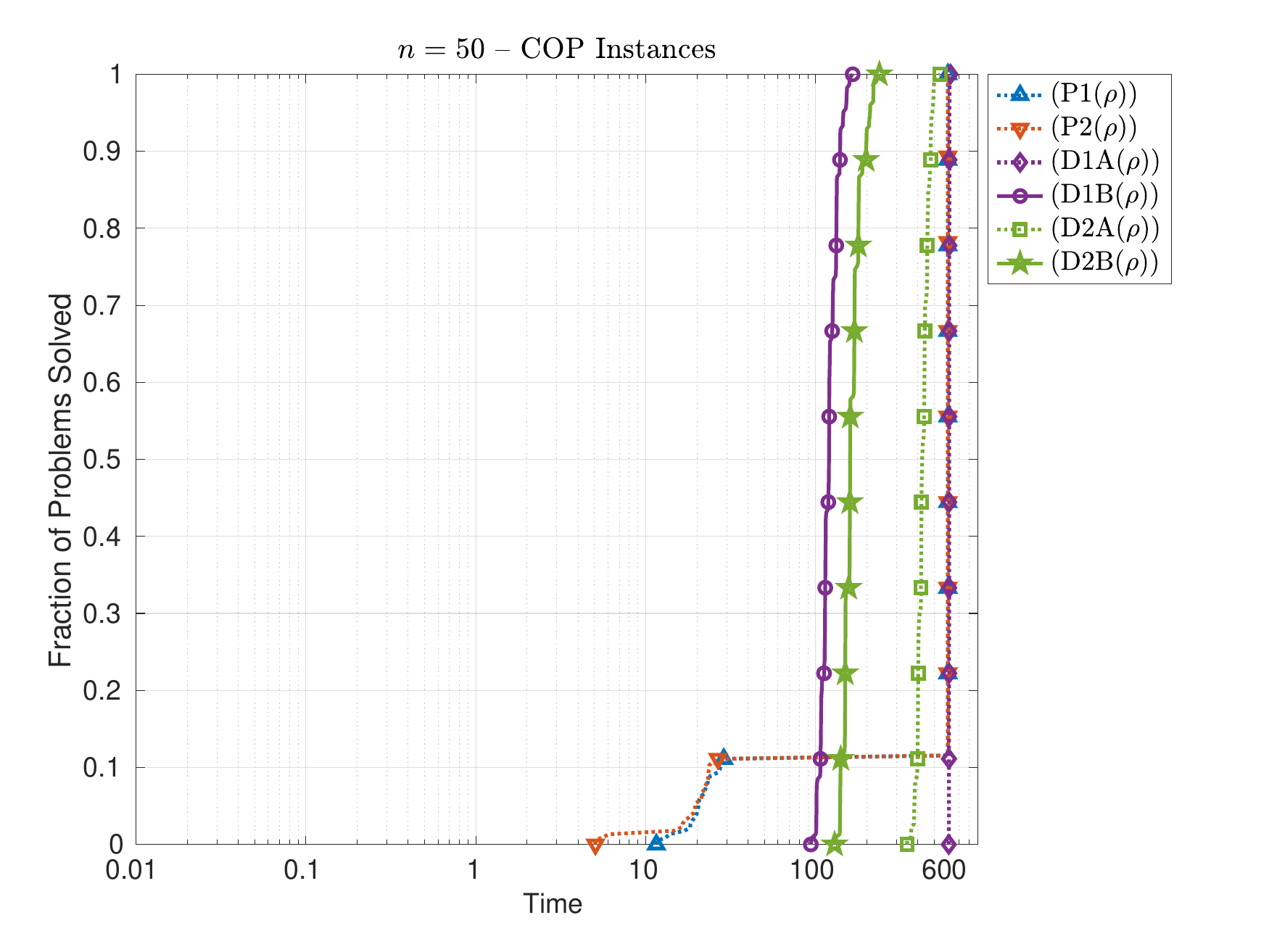}
		\caption{COP Instances ($n = 50$)}
		\label{fig3f}
	\end{subfigure}
    \caption{Empirical cumulative distribution functions of solution times of \eqref{P1},  \eqref{P2}, \eqref{D1A}, \eqref{D1B}, \eqref{D2A}, and \eqref{D2B}}
    \label{Fig3-D1D2}
\end{figure}

Since Figure~\ref{Fig1-PSD-COP} is based only on average solution times, it cannot capture the variability in the data. In an attempt to shed more light on the distribution of the solution times across all instance sets, we present the empirical cumulative distribution functions of all six models in Figure~\ref{Fig3-D1D2}, which is organized similarly to Figure~\ref{Fig1-PSD-COP}. The three rows display the results for PSD, SPN, and COP instances, respectively, whereas the two columns are devoted to the results for $n = 25$ and $n = 50$, respectively. Each of the six graphs presents six plots corresponding to the empirical cumulative distribution functions of solution times of each of the six models on all 225 instances for the corresponding choice of the instance set and $n$ (see Table~\ref{tab1}). The horizontal axis denotes the solution time (in seconds) on a logarithmic scale, and the vertical axis represents the fraction of the instances solved. The markers represent the data points for every $25^{\textrm{th}}$ instance. Once again, we employed identical axis limits in each of the six graphs.

Figure~\ref{Fig3-D1D2} clearly reveals the difference between the distributions of solution times of the exact models \eqref{P1} and \eqref{P2} and those of the convex relaxations \eqref{D1A}, \eqref{D1B}, \eqref{D2A}, and \eqref{D2B}. We outline our observations below:

\begin{itemize}
    \item[(i)] The solution times of the two exact models, in general, exhibit a very similar distribution but a significant variability across different instance sets. 
    
    \item[(ii)] In contrast, we observe that the solution times of the convex relaxations tend to have a much smaller variance and exhibit very similar distributions across different instance sets.
\end{itemize}

Therefore, in contrast with the exact models \eqref{P1} and \eqref{P2}, we conclude that the solution time of each convex relaxation seems to be very robust with respect to the choice of the instance set and the choice of $(\rho_0,\rho)$ in our setting. 

In the sequel, we present a more detailed discussion of the behavior of the solution times of the exact models and convex relaxations. 

\subsubsection{Exact MIQP models} \label{CRExact}

As illustrated by Figures~\ref{Fig1-PSD-COP} and~\ref{Fig3-D1D2}, the solution time of the exact models is highly dependent on the choices of the instance set and on the triple $(n,\rho_0,\rho)$. 
Here, we aim to shed more light on this dependence. 

%%%%%% n = 25 %%%%%%%

%\begin{figure}[!hbt]
\begin{sidewaysfigure}
    \centering
    \begin{subfigure}{0.32\linewidth}
  \includegraphics[width=\linewidth]{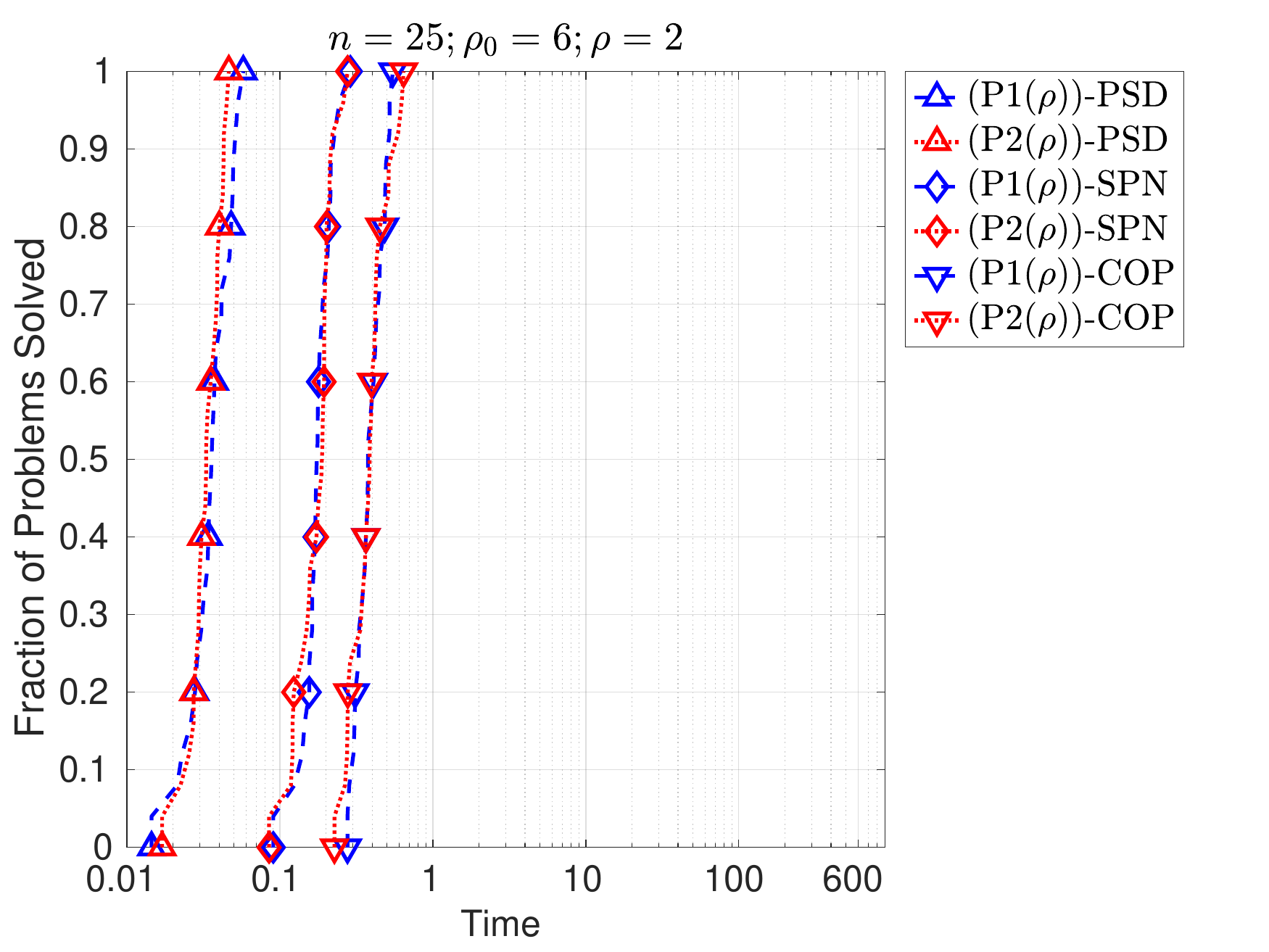}
		\caption{$n = 25, \rho_0 = 6, \rho = 2$}
		\label{fig1a}
	\end{subfigure}
	\begin{subfigure}{0.32\linewidth}
		\includegraphics[width=\linewidth]{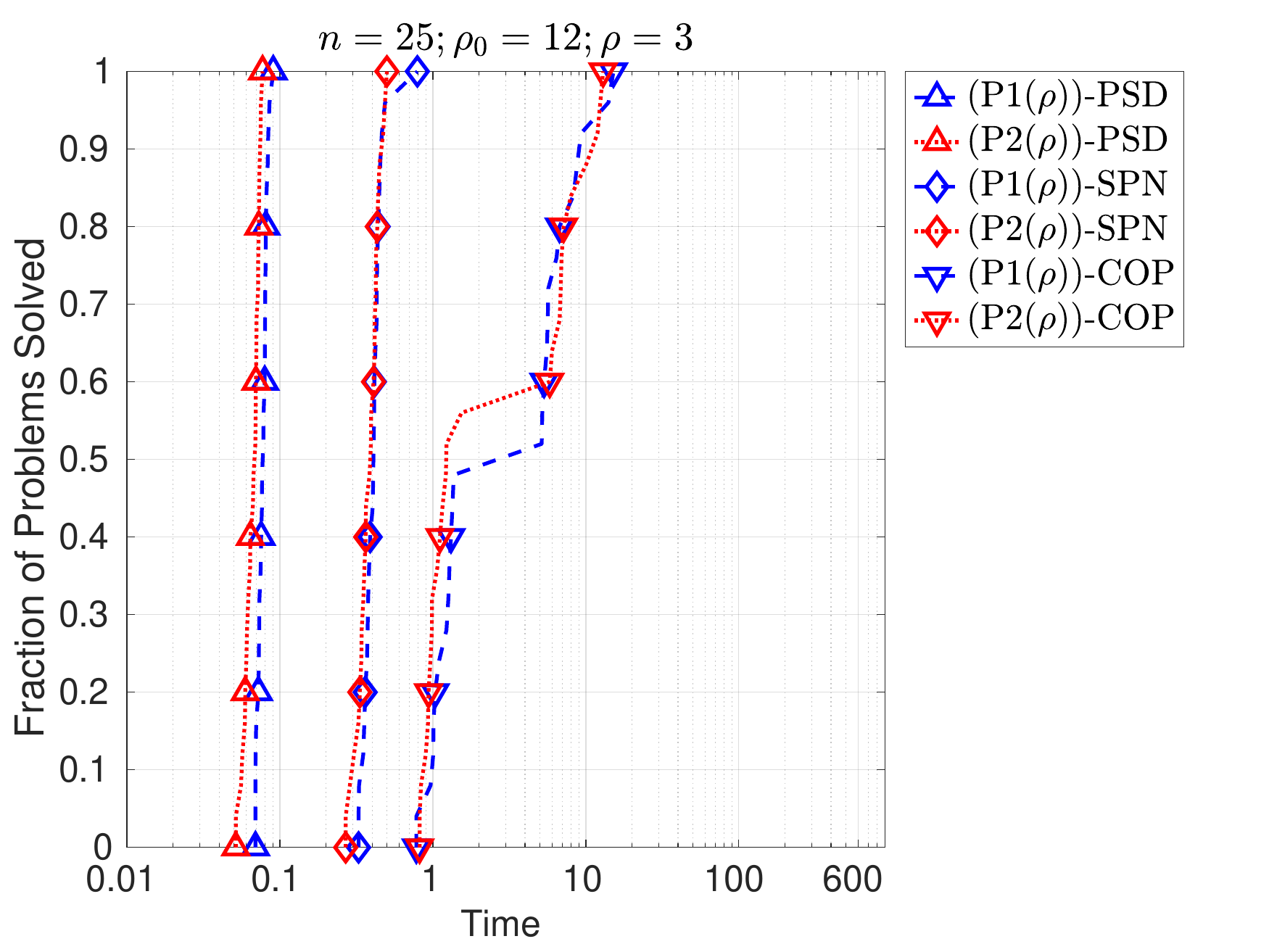}
		\caption{$n = 25, \rho_0 = 12, \rho = 3$}
		\label{fig1b}
	\end{subfigure}
 \begin{subfigure}{0.32\linewidth}
		\includegraphics[width=\linewidth]{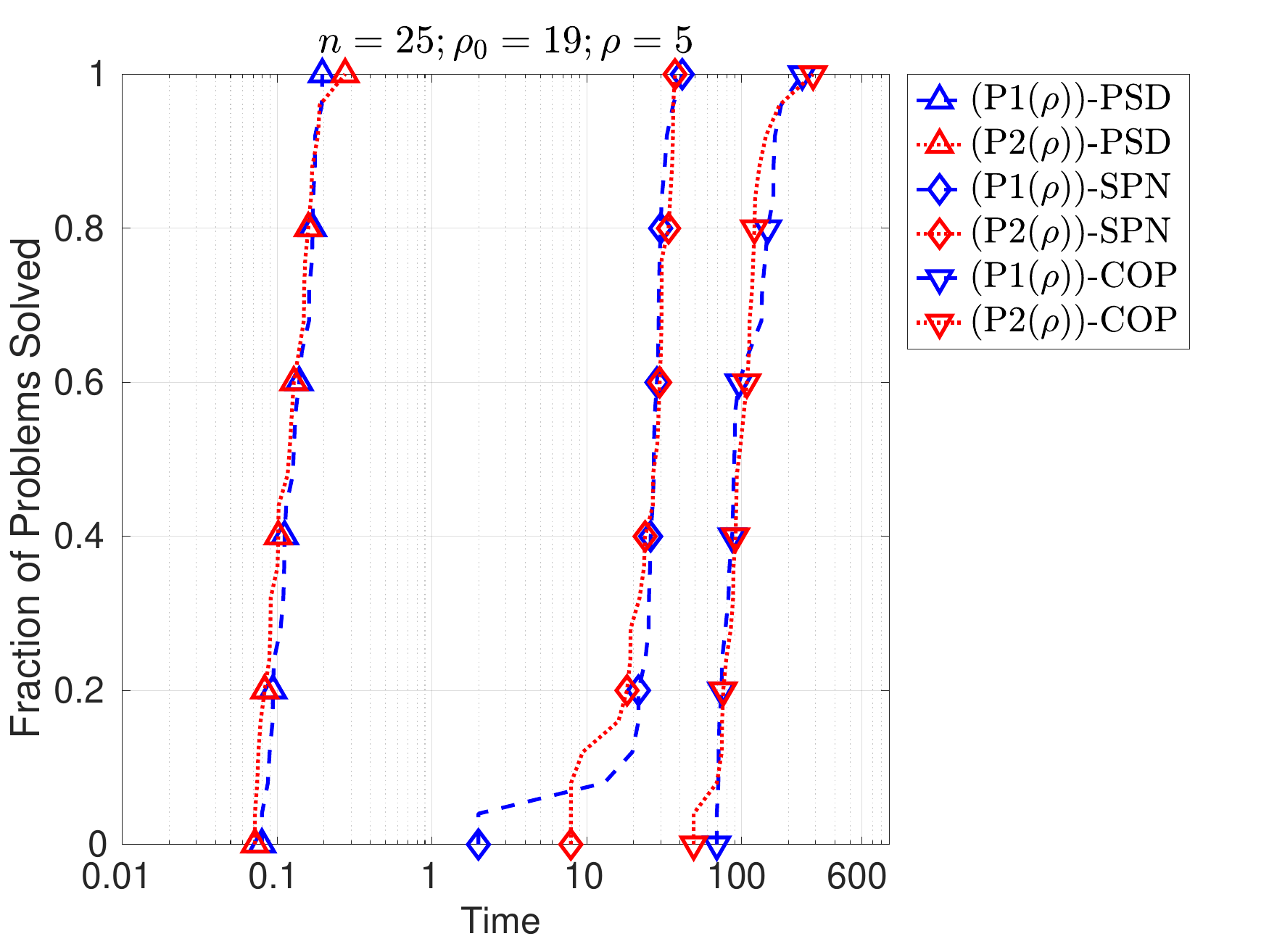}
		\caption{$n = 25, \rho_0 = 19, \rho = 5$}
		\label{fig1c}
	\end{subfigure}
 \\
 \begin{subfigure}{0.32\linewidth}
 \includegraphics[width=\linewidth]{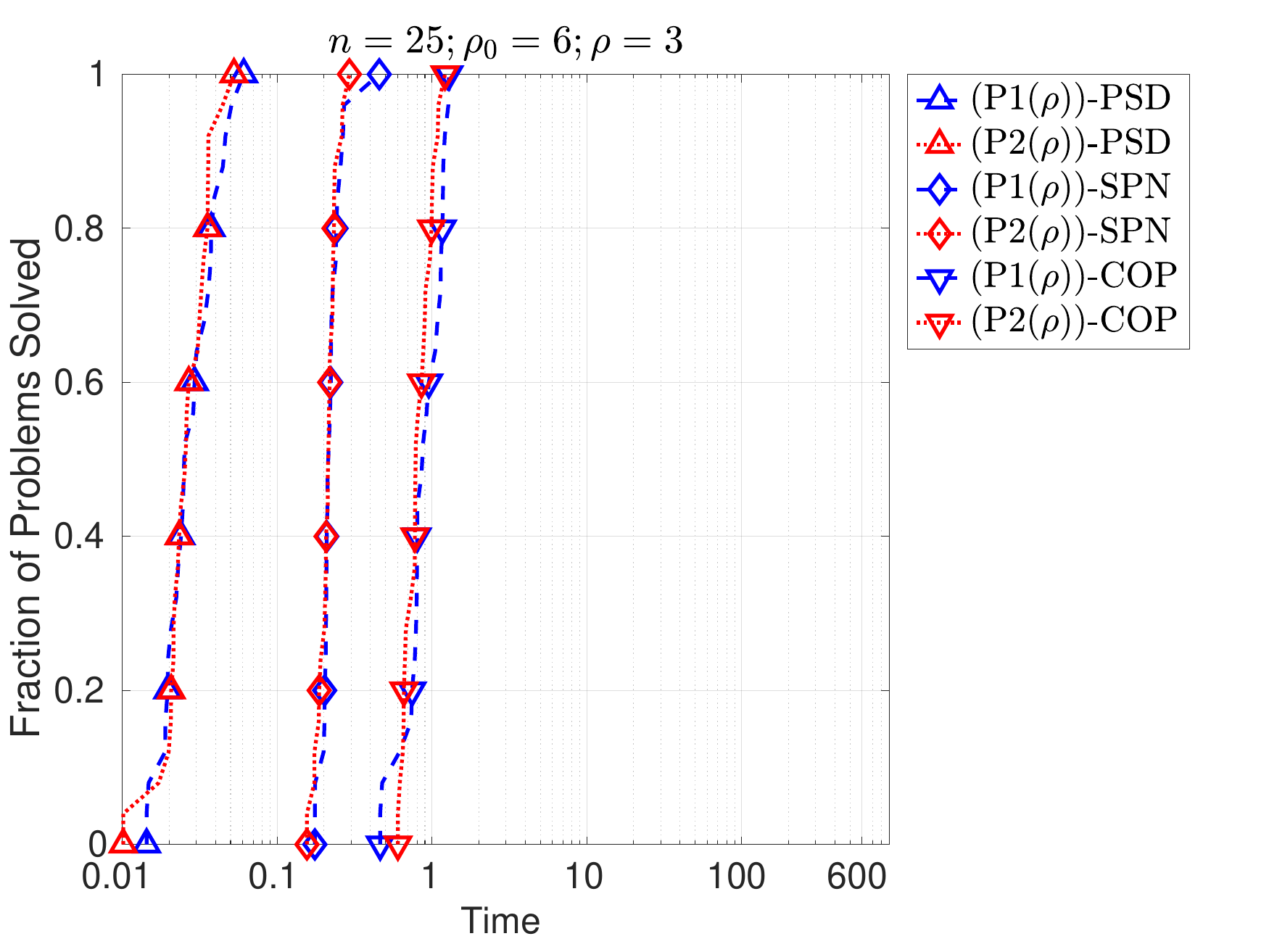}
		\caption{$n = 25, \rho_0 = 6, \rho = 3$}
		\label{fig1d}
	\end{subfigure}
	\begin{subfigure}{0.32\linewidth}
		\includegraphics[width=\linewidth]{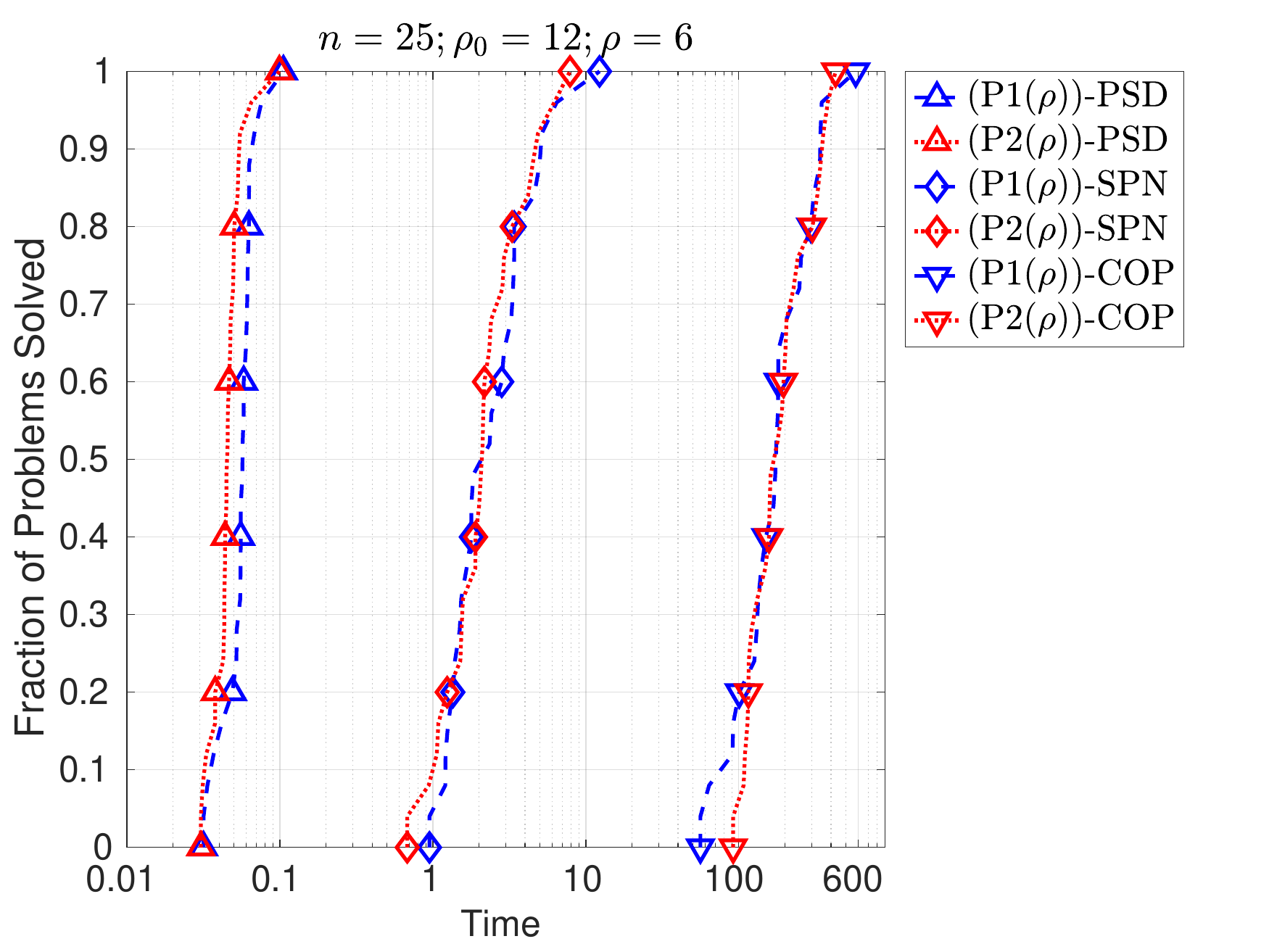}
		\caption{$n = 25, \rho_0 = 12, \rho = 6$}
		\label{fig1e}
	\end{subfigure}
 \begin{subfigure}{0.32\linewidth}
		\includegraphics[width=\linewidth]{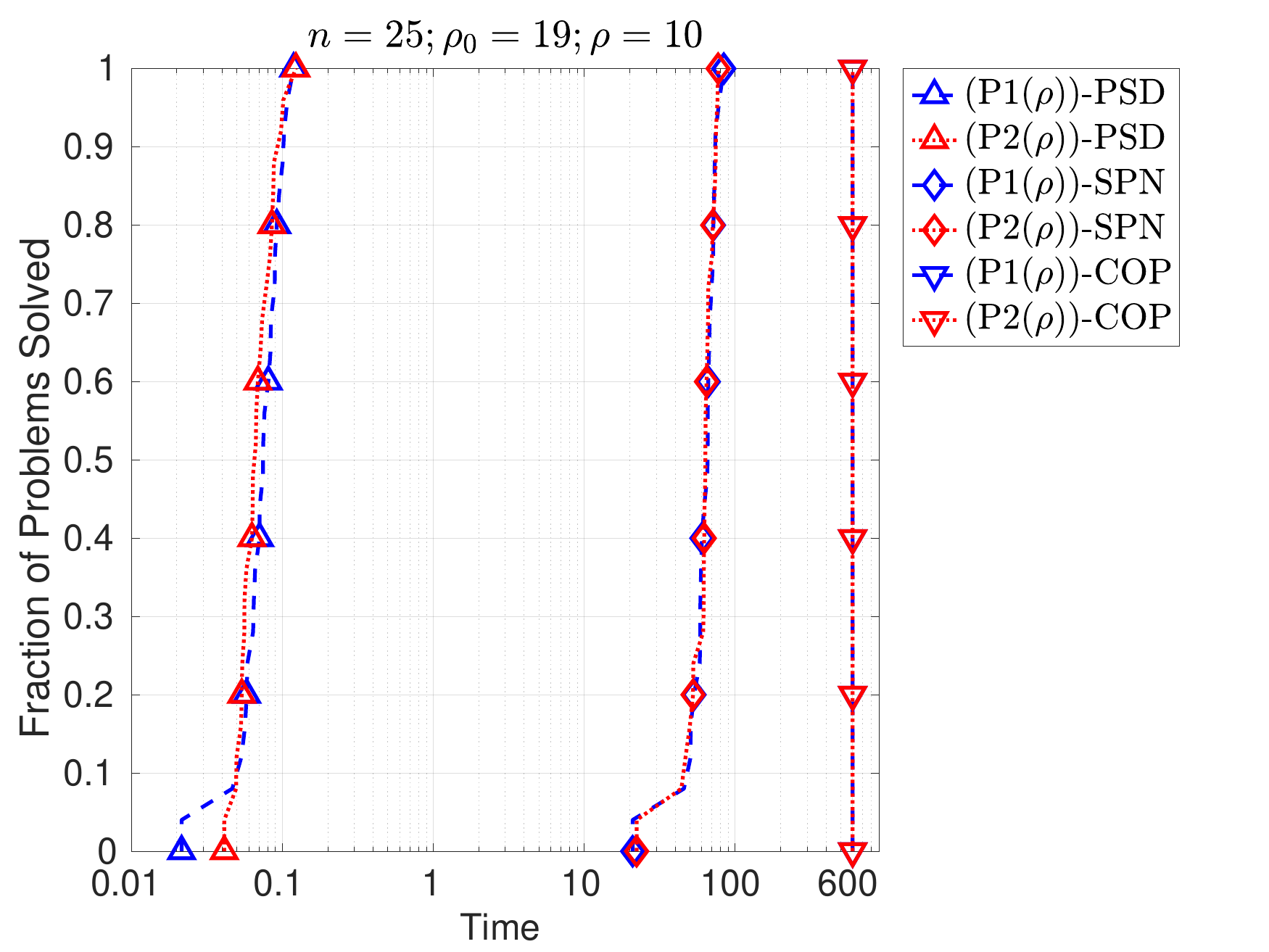}
		\caption{$n = 25, \rho_0 = 19, \rho = 10$}
		\label{fig1f}
	\end{subfigure}
 \\
  \begin{subfigure}{0.32\linewidth}
 \includegraphics[width=\linewidth]{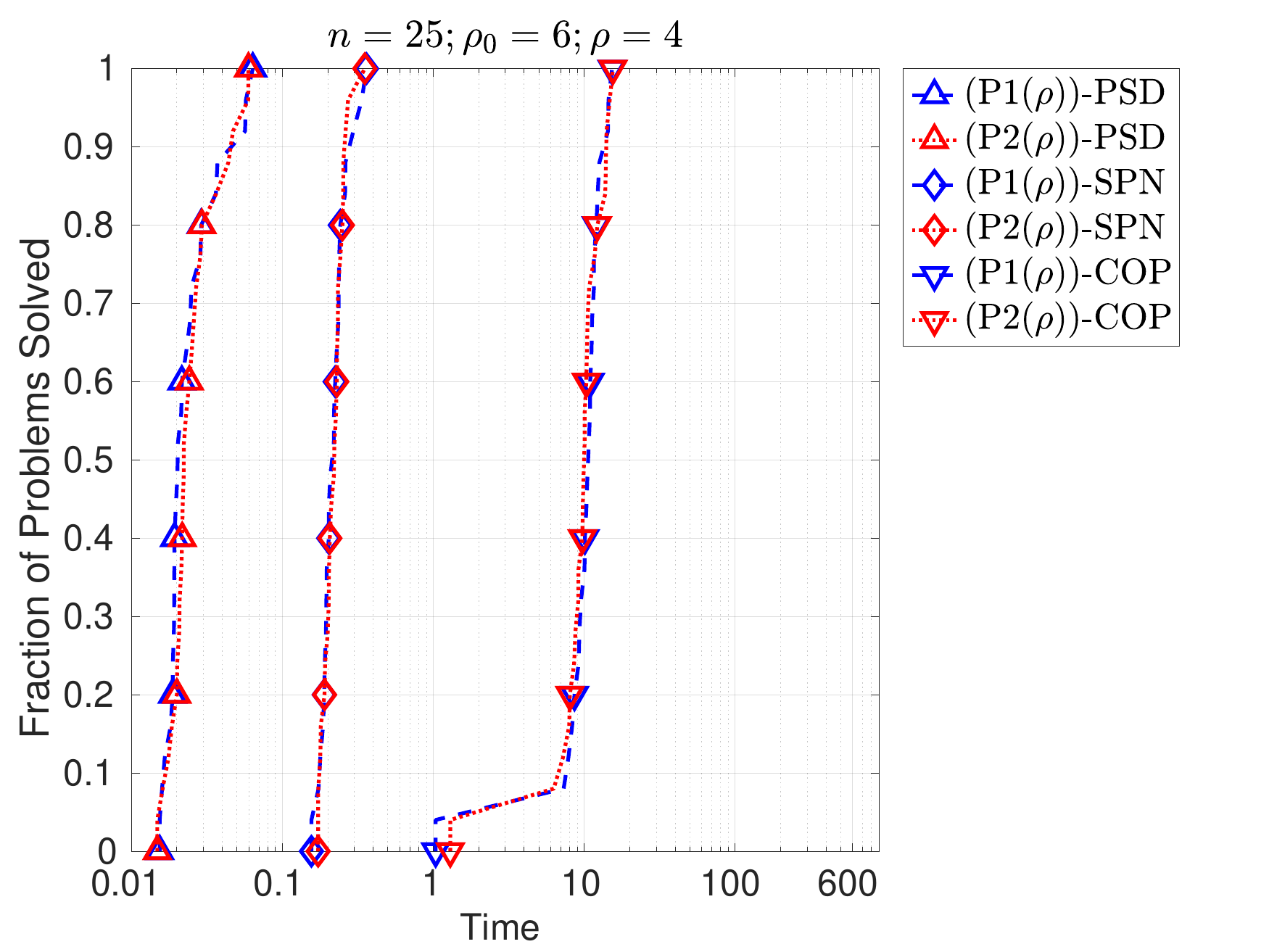}
		\caption{$n = 25, \rho_0 = 6, \rho = 4$}
		\label{fig1g}
	\end{subfigure}
	\begin{subfigure}{0.32\linewidth}
		\includegraphics[width=\linewidth]{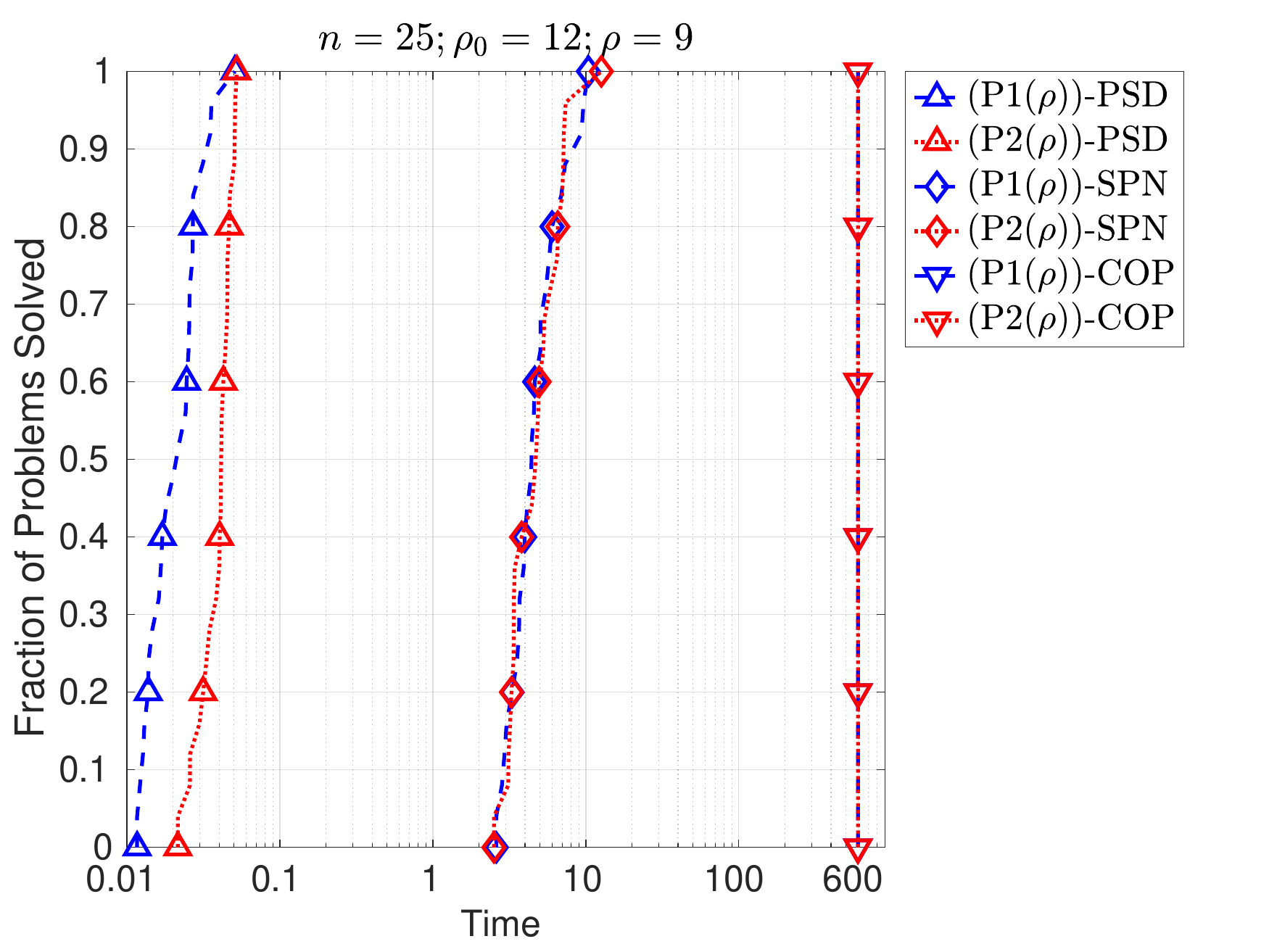}
		\caption{$n = 25, \rho_0 = 12, \rho = 9$}
		\label{fig1h}
	\end{subfigure}
 \begin{subfigure}{0.32\linewidth}
		\includegraphics[width=\linewidth]{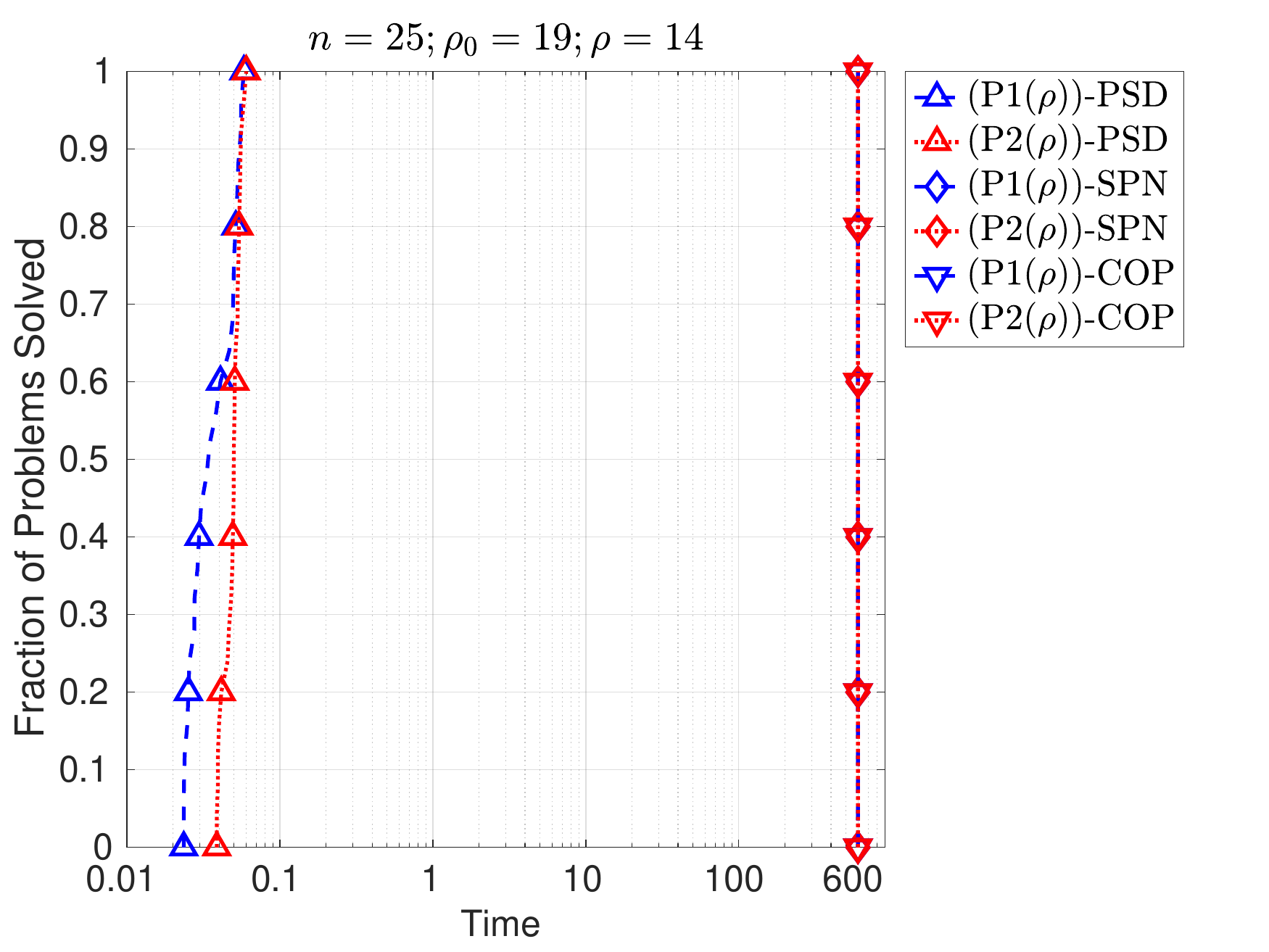}
		\caption{$n = 25, \rho_0 = 19, \rho = 14$}
		\label{fig1i}
	\end{subfigure}
    \caption{Empirical cumulative distribution functions of solution times of \eqref{P1} and \eqref{P2} for all instances with $n = 25$}
    \label{Fig1-n25-P1P2}
\end{sidewaysfigure}

%%%%%% n = 50 %%%%%%%%

\begin{sidewaysfigure}
    \centering
    \begin{subfigure}{0.32\linewidth}
  \includegraphics[width=\linewidth]{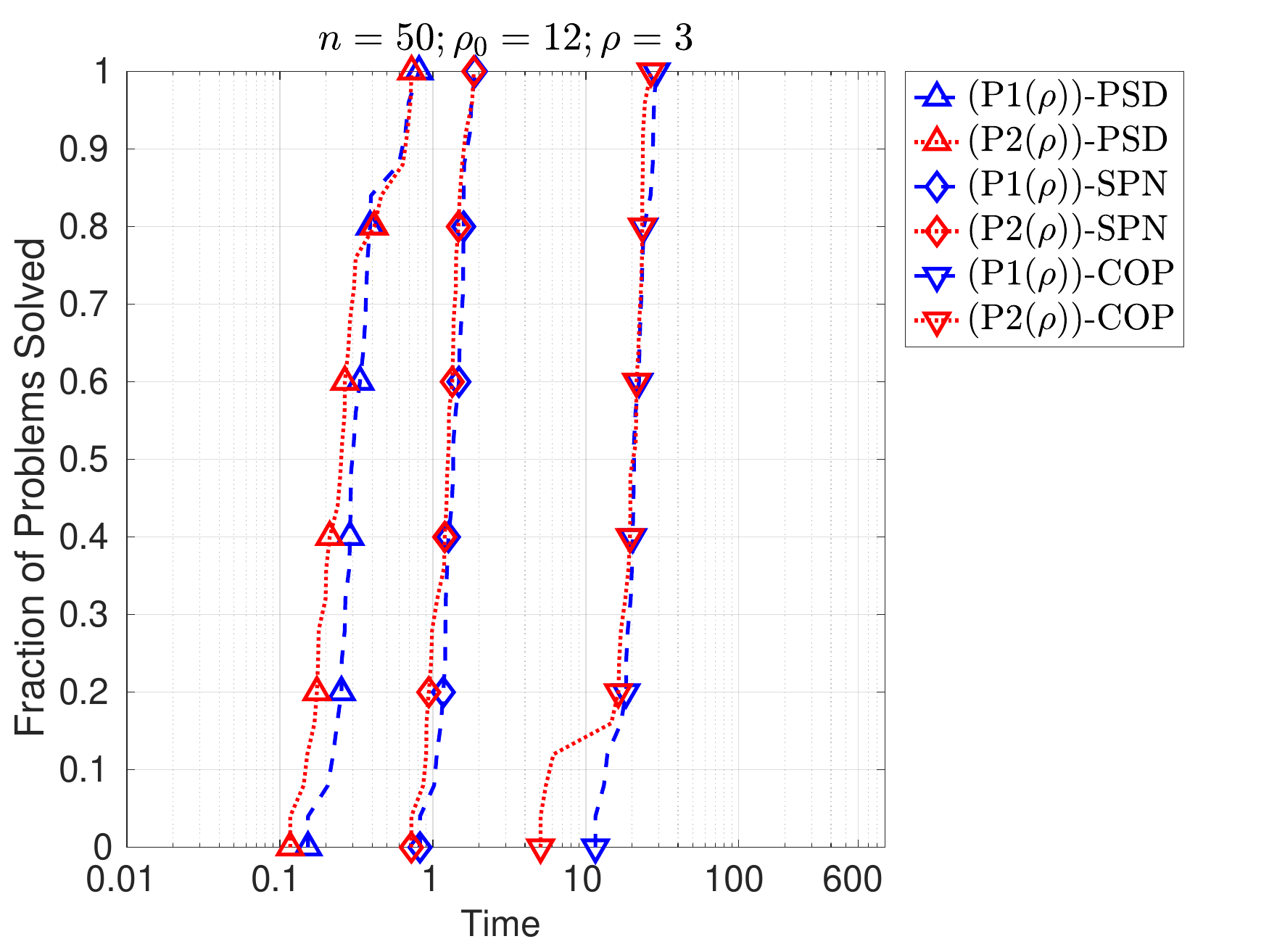}
		\caption{$n = 50, \rho_0 = 12, \rho = 3$}
		\label{fig2a}
	\end{subfigure}
	\begin{subfigure}{0.32\linewidth}
		\includegraphics[width=\linewidth]{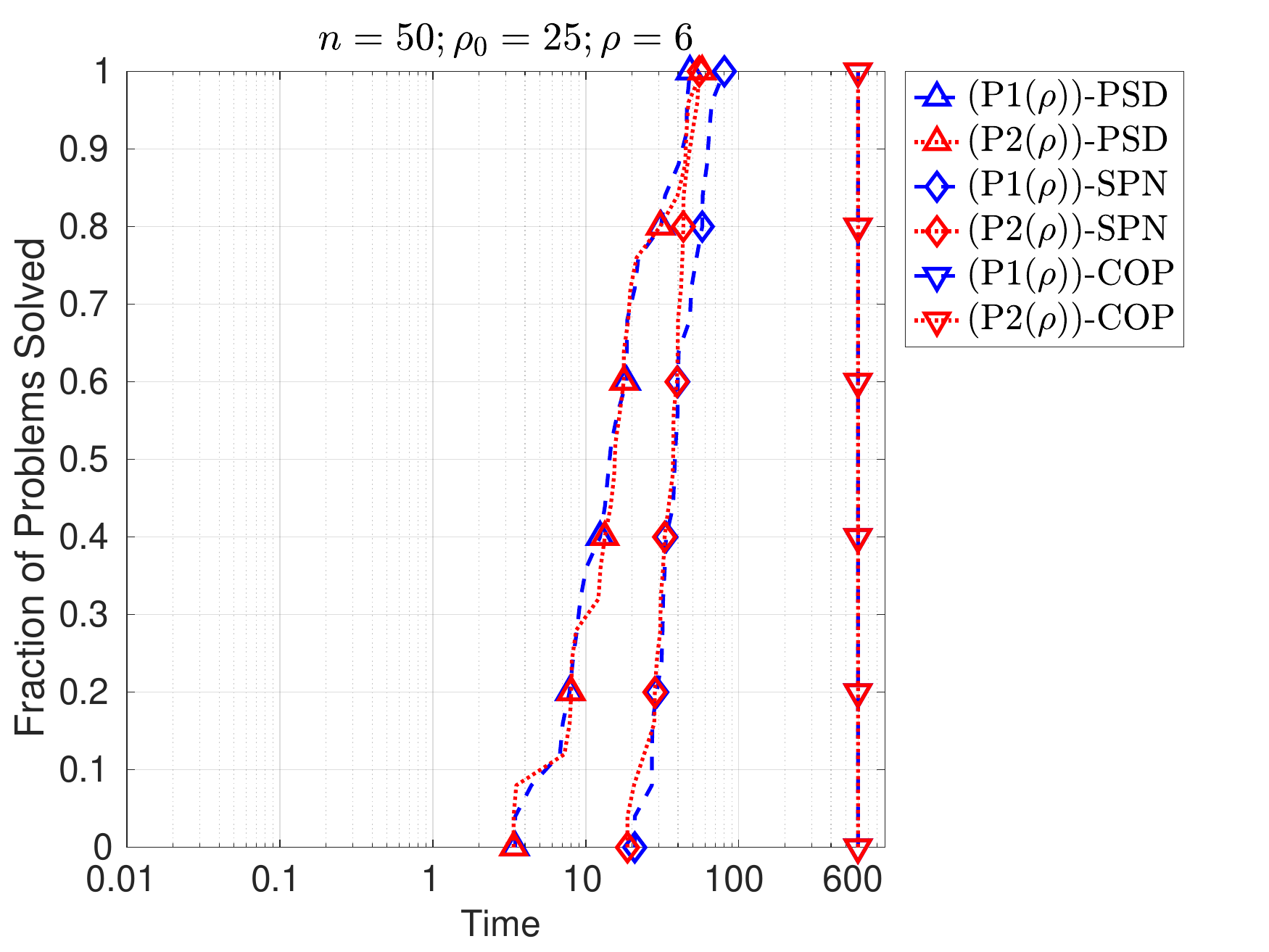}
		\caption{$n = 50, \rho_0 = 25, \rho = 6$}
		\label{fig2b}
	\end{subfigure}
 \begin{subfigure}{0.32\linewidth}
		\includegraphics[width=\linewidth]{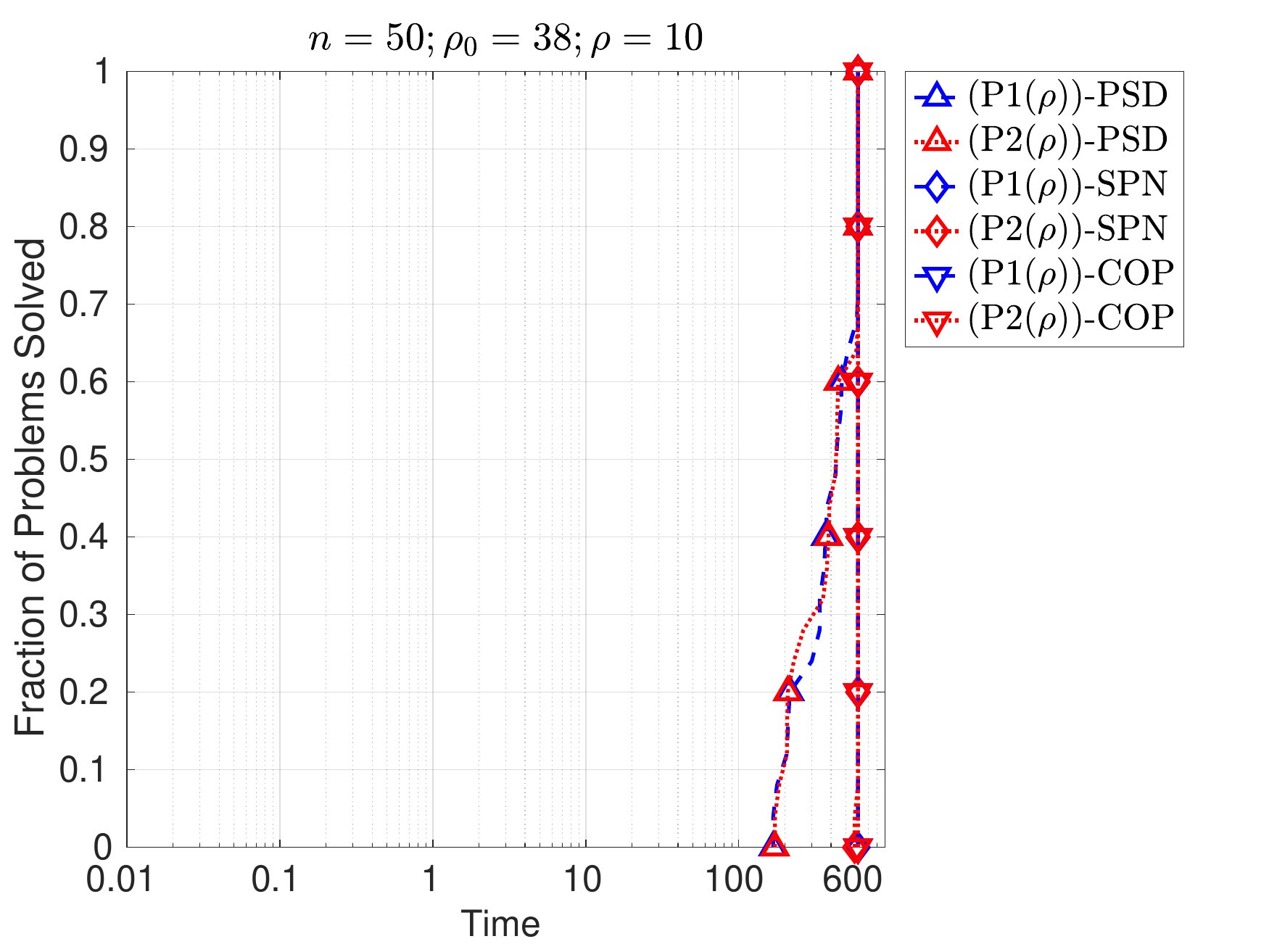}
		\caption{$n = 50, \rho_0 = 38, \rho = 10$}
		\label{fig2c}
	\end{subfigure}
 \\
 \begin{subfigure}{0.32\linewidth}
 \includegraphics[width=\linewidth]{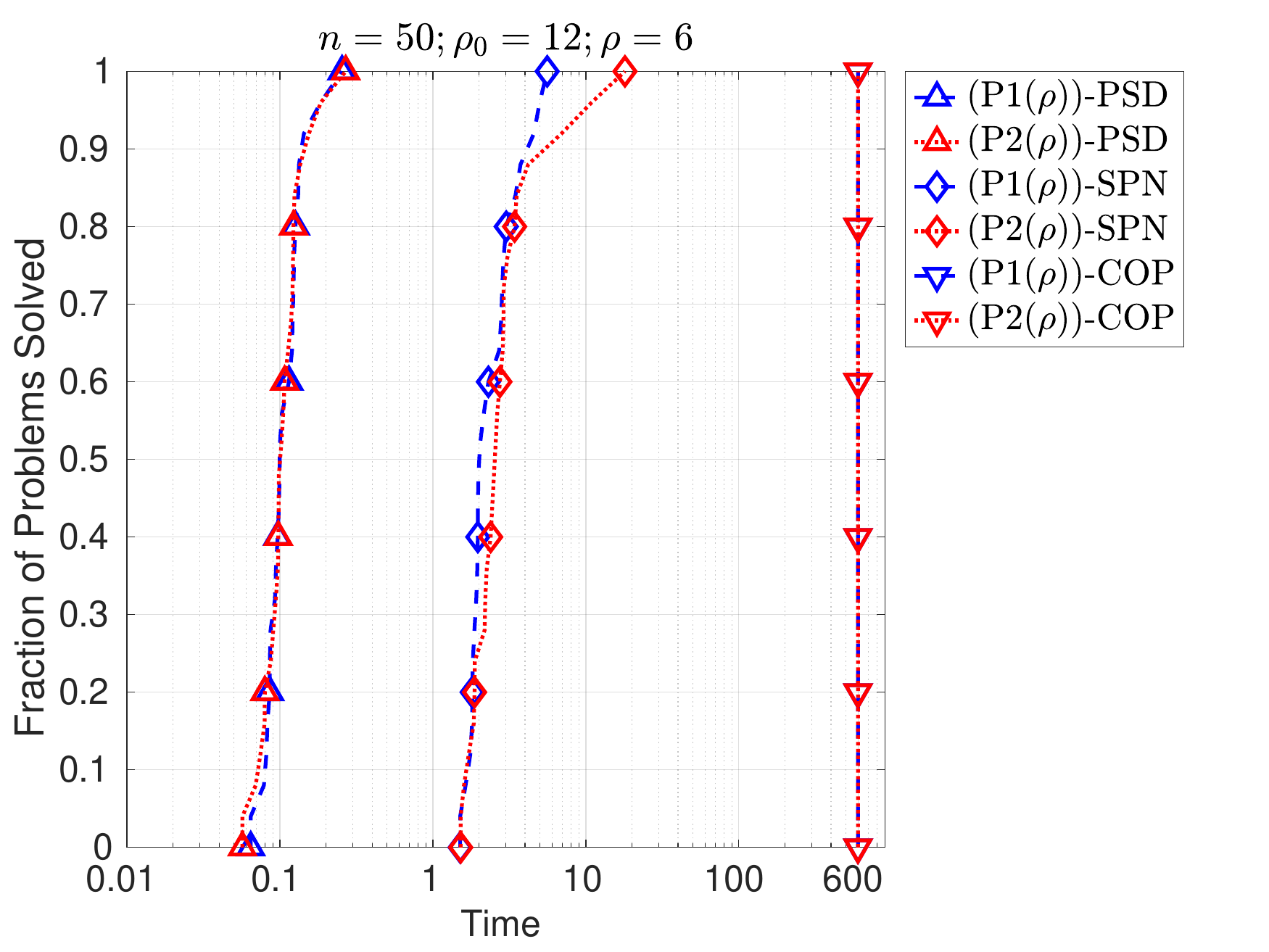}
		\caption{$n = 50, \rho_0 = 12, \rho = 6$}
		\label{fig2d}
	\end{subfigure}
	\begin{subfigure}{0.32\linewidth}
		\includegraphics[width=\linewidth]{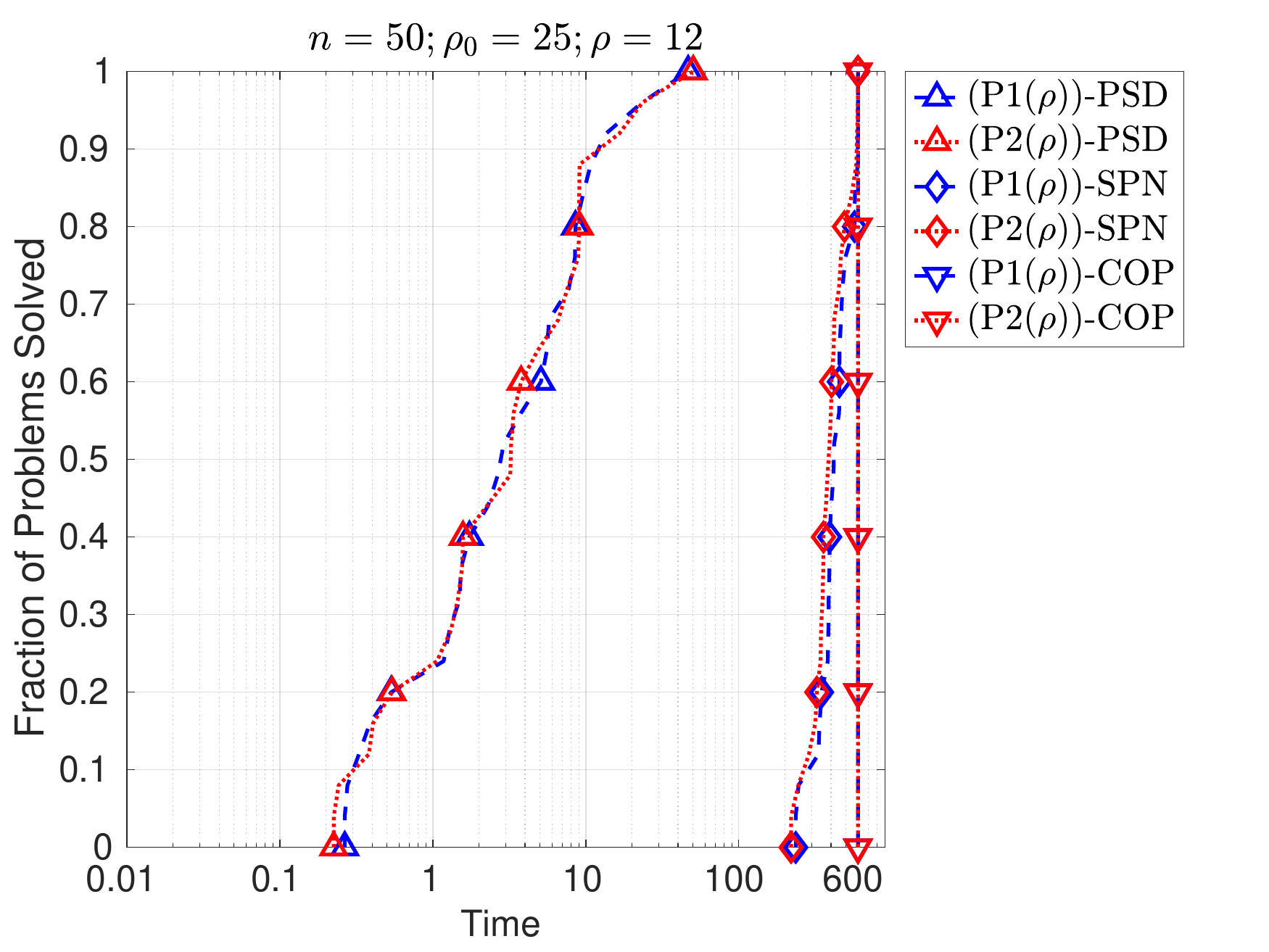}
		\caption{$n = 50, \rho_0 = 25, \rho = 12$}
		\label{fig2e}
	\end{subfigure}
 \begin{subfigure}{0.32\linewidth}
		\includegraphics[width=\linewidth]{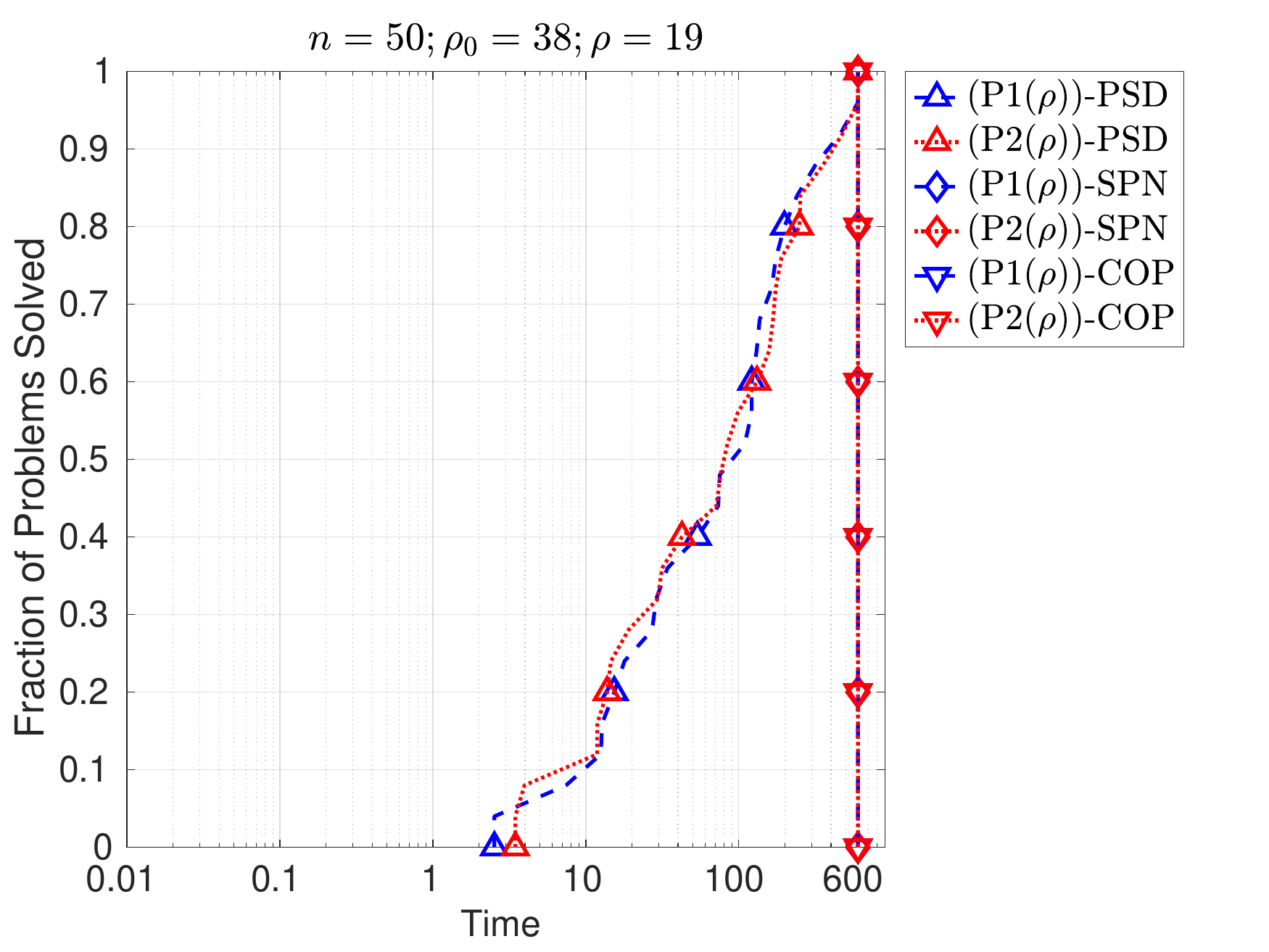}
		\caption{$n = 50, \rho_0 = 38, \rho = 19$}
		\label{fig2f}
	\end{subfigure}
 \\
  \begin{subfigure}{0.32\linewidth}
 \includegraphics[width=\linewidth]{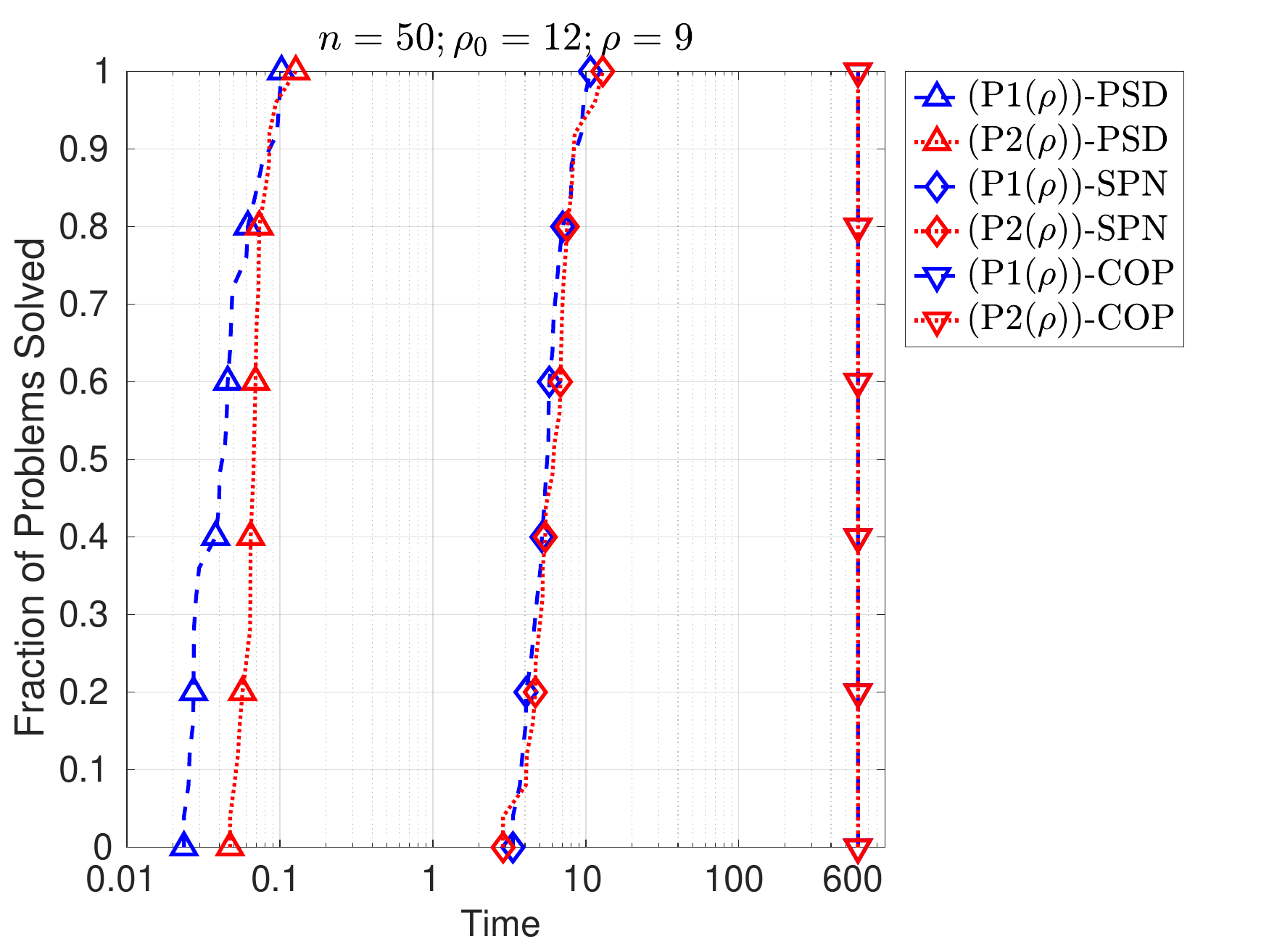}
		\caption{$n = 50, \rho_0 = 12, \rho = 9$}
		\label{fig2g}
	\end{subfigure}
	\begin{subfigure}{0.32\linewidth}
		\includegraphics[width=\linewidth]{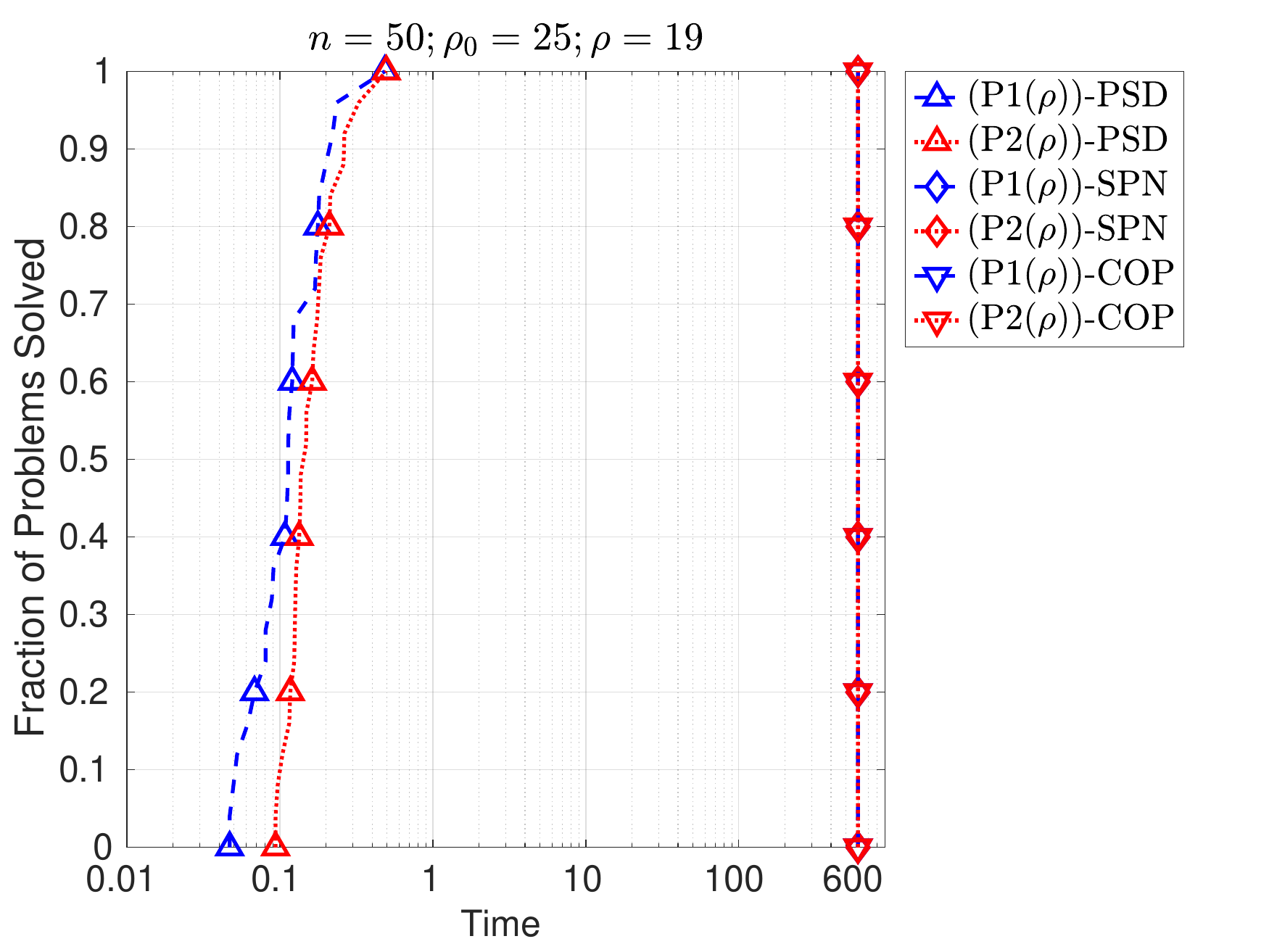}
		\caption{$n = 50, \rho_0 = 25, \rho = 19$}
		\label{fig2h}
	\end{subfigure}
 \begin{subfigure}{0.32\linewidth}
		\includegraphics[width=\linewidth]{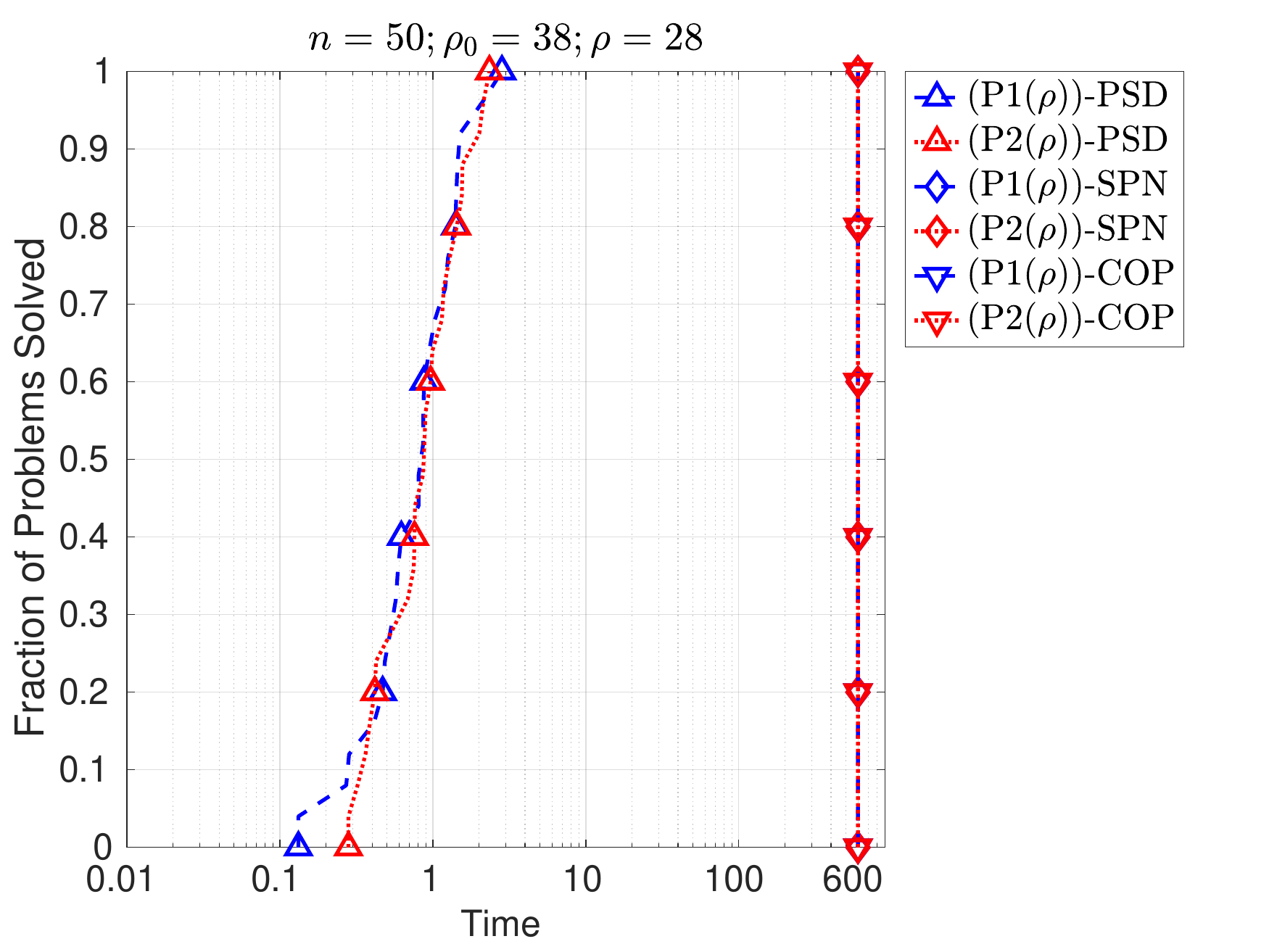}
		\caption{$n = 50, \rho_0 = 38, \rho = 28$}
		\label{fig2i}
	\end{subfigure}
    \caption{Empirical cumulative distribution functions of solution times of \eqref{P1} and \eqref{P2} for all instances with $n = 50$}
    \label{Fig1-n50-P1P2}
\end{sidewaysfigure}

In Figures~\ref{Fig1-n25-P1P2} and~\ref{Fig1-n50-P1P2}, we present the empirical cumulative distribution functions of solution times of \eqref{P1} and \eqref{P2} for all instances with $n = 25$ and $n = 50$, respectively. Each graph in each figure consists of six plots, each of which corresponds to the solution times of \eqref{P1} and \eqref{P2} on each of PSD, SPN, and COP instances for a fixed choice of $(n,\rho_0,\rho)$. In each column of each figure, we fix the tuple $(n,\rho_0)$ and present the empirical cumulative distributions of the solution times of 25 instances corresponding to the three different choices of $\rho$ as presented in Table~\ref{tab1}. On the other hand, each row corresponds to a different ratio of $\rho/\rho_0 \in \{0.25, 0.5, 0.75\}$. Again, we use a logarithmic scale for the solution time and ensure that all axis limits are identical for a meaningful comparison across different graphs. Finally, we note that the markers represent the data points at every $5^{\textrm{th}}$ instance.  

A close examination of Figures~\ref{Fig1-n25-P1P2} and~\ref{Fig1-n50-P1P2} reveal the following observations about the two exact models \eqref{P1} and \eqref{P2}:

\begin{itemize}
    \item[(i)] The two models, in general, exhibit very similar distributions across all parameter choices, except for some small differences for a few choices of $(n,\rho_0,\rho)$ on PSD instances, where \eqref{P1} seems to be solved slightly faster than \eqref{P2}.

    \item[(ii)] We clearly observe an empirical first-order stochastic dominance among PSD, SPN, and COP instances. For each choice of $(n,\rho_0,\rho)$, PSD instances tend to achieve the lowest solution times, followed by SPN instances, whereas COP instances exhibit the highest solution times.  

    \item[(iii)] For each fixed $(n,\rho_0)$, we observe that the solution time tends to decrease as $\rho$ increases for PSD instances, especially for $n = 50$, whereas it increases for each of SPN and COP instances. 
    
    \item[(iv)] If we fix $n$ and the ratio $\rho/\rho_0 \in \{0.25, 0.5, 0.75\}$, the solution time increases as $\rho_0$ increases across all three sets of instances. For fixed $n$, we therefore conclude that the solution time of the exact models seems to be also highly influenced by the choice of the ratio $\rho/\rho_0$. 

    \item[(v)] For $n = 25$, both exact models can be solved very quickly for all PSD instances. In contrast, for SPN and COP instances, each of \eqref{P1} and \eqref{P2} is terminated due to the time limit of 600 seconds on certain subsets of the instances. Therefore, we observe that SPN and COP instances can be particularly challenging for \eqref{P1} and \eqref{P2}, even for $n = 25$ with particular choices of $(\rho_0,\rho)$. For $n = 50$, we observe that even some PSD instances were terminated due to the time limit. Recall that each instance in this set has a convex objective function. For each choice of $(n,\rho_0,\rho)$, a comparison of PSD, SPN, and COP instances reveals that the number of instances terminated due to the time limit exhibits a monotonically increasing behavior. 
\end{itemize}

In summary, the solution time of exact models \eqref{P1} and \eqref{P2} seem to be highly dependent on each of the choices of the instance set, $(n, \rho_0,\rho)$, as well as on the ratio $\rho/\rho_0$. COP instances tend to be the most challenging for the exact models, followed by SPN instances, which, in turn, are followed by PSD instances. These results illustrate that Algorithms~\ref{Alg1} and~\ref{Alg2} are capable of generating instances of~\eqref{sStQP} that are particularly challenging for {\tt Gurobi}, even in relatively small dimensions.

We close this section with a brief discussion of why we choose not to report optimality gaps for \eqref{P1} and \eqref{P2} on instances that were terminated by the time limit. {\tt Gurobi} computes the optimality gap using the formula $|z_P - z_D|/|z_P|$, where $z_P$ and $z_D$ denote the best objective function value and the best lower bound, respectively. In our settings, the optimal value is usually very close to zero, which leads to very large optimality gaps due to the division by a very small number. Therefore, optimality gaps reported by {\tt Gurobi} tend to be extremely large and do not seem to reflect the quality of the solution accurately. We will, however, discuss the quality of the solutions relying directly on the lower bounds in Section~\ref{CRQLB}. 

\subsubsection{Convex DNN relaxations} \label{CRConvex}

Now let us focus on solution times of the four convex relaxations \eqref{D1A}, \eqref{D1B}, \eqref{D2A}, and \eqref{D2B}. 

In contrast with the exact models, Figure~\ref{Fig3-D1D2} illustrates that the solution times of convex relaxations do not exhibit a strong correlation with the choices of the instance set and the tuple $(\rho_0,\rho)$. As such, we do not present the counterparts of Figures~\ref{Fig1-n25-P1P2} and~\ref{Fig1-n50-P1P2} for the convex relaxations due to space considerations. 

Based on Figure~\ref{Fig3-D1D2}, we make the following observations about the convex relaxations \eqref{D1A}, \eqref{D1B}, \eqref{D2A}, and \eqref{D2B}:

\begin{itemize}
    \item[(i)] The distributions of the solution times clearly illustrate the computational advantages of the reduced formulations \eqref{D2A} and \eqref{D2B} over their counterparts. 
    
    \item[(ii)] We observe an empirical first-order stochastic dominance among the distributions of the solution times of \eqref{D1A}, \eqref{D1B}, \eqref{D2A}, and \eqref{D2B} on each of the six graphs. In particular, we observe that the reduced formulation \eqref{D1B} consistently achieves the lowest solution times, followed by the reduced formulation \eqref{D2B}, which, in turn, is followed by \eqref{D2A} and \eqref{D1A}, respectively. In addition to being the theoretically tightest relaxation, it is worth noticing that \eqref{D1B} also outperforms \eqref{D2B} in terms of the solution time.  

    \item[(iii)] For $n = 25$, each of the four relaxations can be solved to optimality within the time limit for each instance set and each choice of $\rho_0$ and $\rho$. For $n = 50$, we observe that \eqref{D1A} was terminated due to the time limit on every instance whereas \eqref{D2A} was terminated due to the time limit on some subsets of the PSD and SPN instances. In contrast, each of the two reduced formulations \eqref{D1B} and \eqref{D2B} was solved to optimality within the time limit on all instances. These observations clearly demonstrate the computational advantages of the reduced formulations \eqref{D2A} and \eqref{D2B} over their counterparts, especially in higher dimensions. 

    \item[(iv)] For PSD instances with $n = 25$, the distributions of the solution times of each of the two exact models exhibit a first-order stochastic dominance on the corresponding distribution of each of the four convex relaxations. In contrast, we observe that the solution times of the convex relaxations achieve better performance on a larger subset of instances for each instance set with a larger $n$. For fixed $n$, a comparison of PSD, SPN, and COP instances reveals an increasingly better performance of the convex relaxations in comparison with the exact models. 
    \end{itemize}

Table~\ref{tab3} reports the number of instances on which \eqref{D2A} is terminated due to the time limit. Note that we omit \eqref{D1A} since all instances with $n = 50$ were terminated due to the time limit. It is worth noticing that \eqref{D2A} was solved to optimality within the time limit on all COP instances with $n = 50$. In contrast with $n = 25$, we therefore conclude that the solution time of \eqref{D2A} for $n = 50$ seems to be somewhat sensitive to the set of instances and the choices of the parameters $(\rho_0,\rho)$. 

\begin{table}[!hbt] 
\begin{center}
\begin{tabular}{ |c|c|c|c||c| } 
\hline
Instance Set & $n$ & $\rho_0$ & $\rho$ & \eqref{D2A} \\
\hline
\hline
\multirow{7}{1em}{PSD} & \multirow{7}{1em}{50} & 
 12 & 9 & 1 \\ 
 \cline{3-5}
 & & 25 & 6 & 14 \\ 
 & & 25 & 12 & 10 \\
 & & 25 & 19 & 3 \\
 \cline{3-5}
 & & 38 & 10 & 8 \\
 & & 38 & 19 & 22 \\
 & & 38 & 28 & 18 \\ 
\hline
\hline
\multirow{7}{1em}{SPN} & \multirow{7}{1em}{50} & 12 & 3 & 1 \\
\cline{3-5}
 & & 25 & 6 & 12 \\
 & & 25 & 12 & 8 \\
 & & 25 & 19 & 2 \\
 \cline{3-5}
 & & 38 & 10 & 20 \\
 & & 38 & 19 & 24 \\
 & & 38 & 28 & 10 \\
 \hline
 \hline
 COP & 50 & -- & -- & -- \\
 \hline
\end{tabular}
\end{center}
\caption{Number of instances (out of 25) terminated due to the time limit (excluding \eqref{D1A} that was terminated due to the time limit on \emph{all} instances with $n = 50$)}
\label{tab3}
\end{table}

Finally, we recall that optimality gaps are not reported on instances terminated due to the time limit since they are not particularly meaningful in our setting. 

In conclusion, the solution time of each relaxation seems to be very robust with respect to the choice of the instance set and the choice of $(\rho_0,\rho)$ in our setting. The reduced formulations \eqref{D2A} and \eqref{D2B} yield significant computational advantages over their counterparts, especially in higher dimensions. 

\subsection{Quality of lower bounds} \label{CRQLB}

In this section, we discuss the quality of the lower bounds arising from the convex relaxations. To that end, we compare the absolute gaps of the exact models with those of the convex relaxations. For an instance, we define the MIQP gap as the difference between the best upper bound and the best lower bound obtained from \eqref{P1} and \eqref{P2}. Similarly, we define the DNN gap to be the difference between the best upper bound obtained from the two exact models, \eqref{P1} and \eqref{P2}, and the best lower bound obtained from the reduced formulations \eqref{D2A} and \eqref{D2B}. Note that we use the lower bounds from the reduced formulations since all of them can be solved to optimality on all instances.

If at least one of \eqref{P1} and \eqref{P2} can solve an instance to optimality, then the MIQP gap will be very close to zero. On such an instance, we deem that our relaxation is \emph{exact} if the DNN gap is of similar magnitude to that of the MIQP gap. If each of \eqref{P1} and \eqref{P2} is terminated due to the limit on an instance, then the MIQP gap will be sufficiently away from zero. In this case, if the DNN gap is smaller than or equal to the MIQP gap, we say that our relaxations are better than the exact modes. Otherwise, our relaxations are \emph{worse} than the exact models. 

In Figure~\ref{Fig4-Gaps}, which is organized similarly to Figure~\ref{Fig3-D1D2}, we plot the DNN and MIQP gaps. To ease reading, we have accumulated all instances of the same dimensions from each instance set across all different parameter constellations $(\rho_0,\rho)$, yielding 225 such instances in total for every graph (a)-(f), depicting the situation in the different hardness classes of the generated instances. In each graph, the horizontal axis represents the instances ordered in nondecreasing MIQP gaps, which are represented by the blue curve, and the vertical axis denotes the absolute gap. Finally, the markers indicate the data points at every $25^{\textrm{th}}$ instance.

We outline our observations:

\begin{itemize}
    \item[(i)] For PSD instances, our relaxations are exact on the vast majority of all instances solved to optimality by an exact model and are better than the exact models on all instances with a positive MIQP gap. 
    
    \item[(ii)] For SPN instances, our relaxations are exact on a relatively smaller proportion of all instances solved to optimality by an exact model and are better than the exact models on all instances with a positive MIQP gap. Furthermore, it is worth noticing that our relaxations continue to be exact on several instances with a positive MIQP gap.

    \item[(iii)] On COP instances, our relaxations are worse than the exact models on the vast majority of instances. However, we remark that the DNN gap is smaller on a small subset of instances with $n = 50$ that admit a positive MIQP gap.
\end{itemize}

The results show clearly that the DNN relaxations provide tight lower bounds on a large number of PSD and SPN instances and that there is a significant number of instances where the MIQP gap is larger than the gaps achieved by the DNN relaxations, sometimes drastically so (one or even two orders of magnitude). The dominance of MIQP models on the extremely difficult COP instances in graphs (e) and (f) may be explained by the fact that additional cuts may be needed to tighten the DNN gaps without resorting to higher-order relaxations of the (intractable) exact conic reformulations.

We close this 
part by reporting that we have not observed a significant difference between the quality of the lower bounds arising from the provably tighter relaxation \eqref{D1B} and the weaker \eqref{D2B}. However, we still recommend using the tighter \eqref{D1B} since it also outperforms \eqref{D2B} in terms of the solution time on our instance sets.

%%%%%%%%%%%%%%%%%
%%%%%%%%%%%%%%%%%

%%%%% All Models - Absolute Gaps %%%%

\begin{figure}[!hbt]
    \centering
    \begin{subfigure}{0.495\linewidth}
		\includegraphics[width=\linewidth]{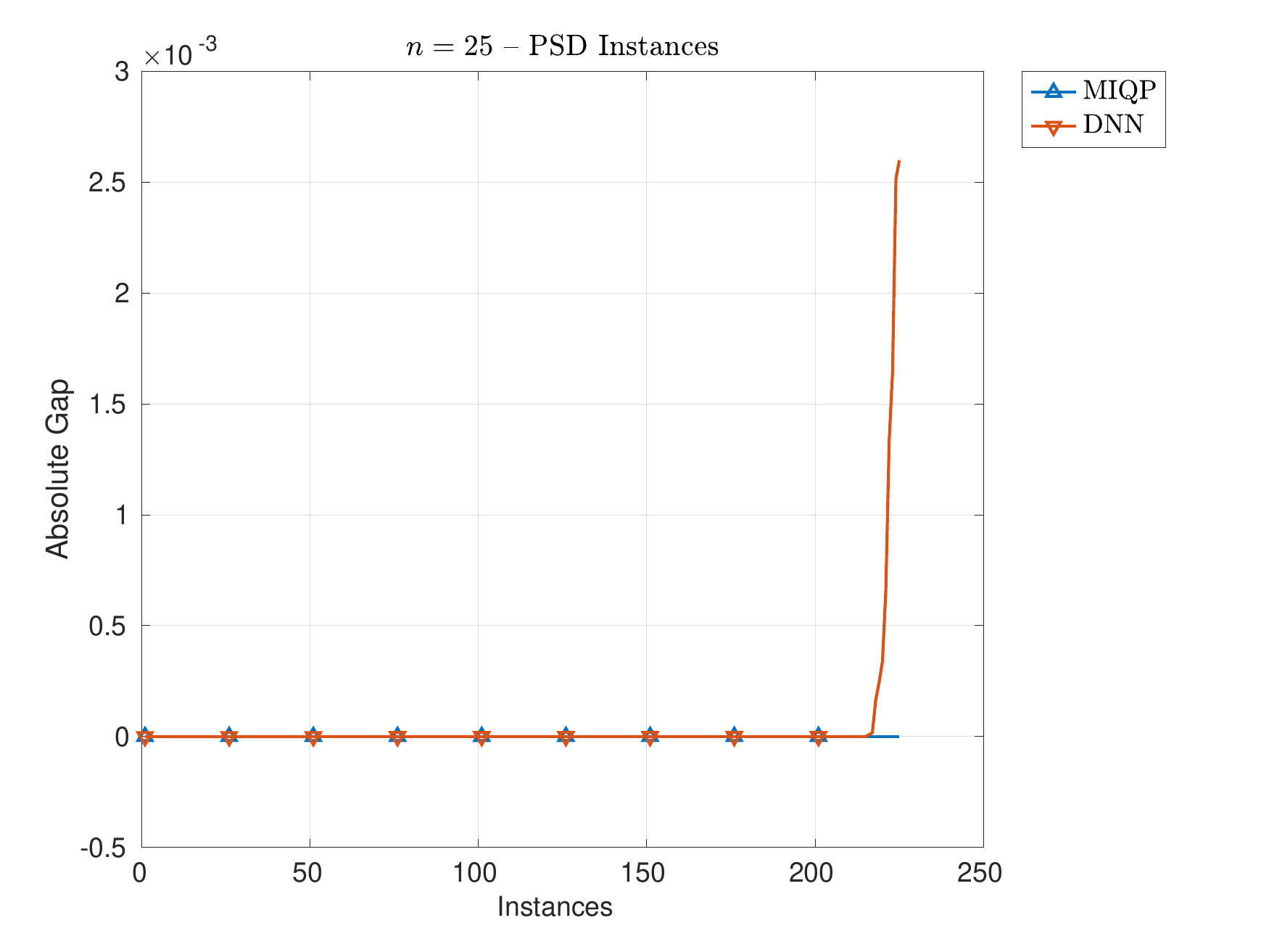}
		\caption{PSD Instances ($n = 25$)}
		\label{fig4a}
	\end{subfigure}
	\begin{subfigure}{0.495\linewidth}
  \includegraphics[width=\linewidth]{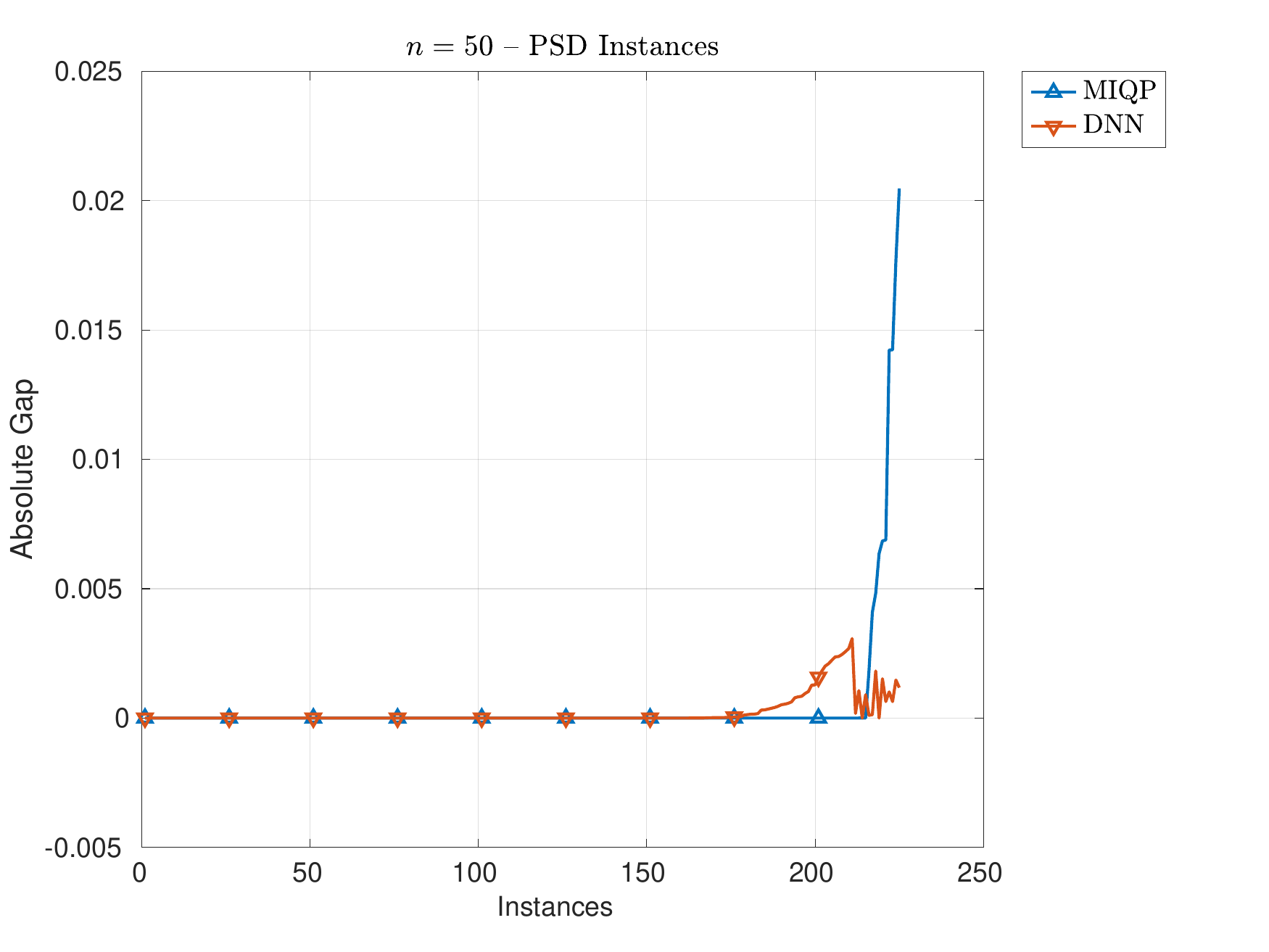}
		\caption{PSD Instances ($n = 50$)}
		\label{fig4b}
	\end{subfigure}
 \\
 \begin{subfigure}{0.495\linewidth}
		\includegraphics[width=\linewidth]{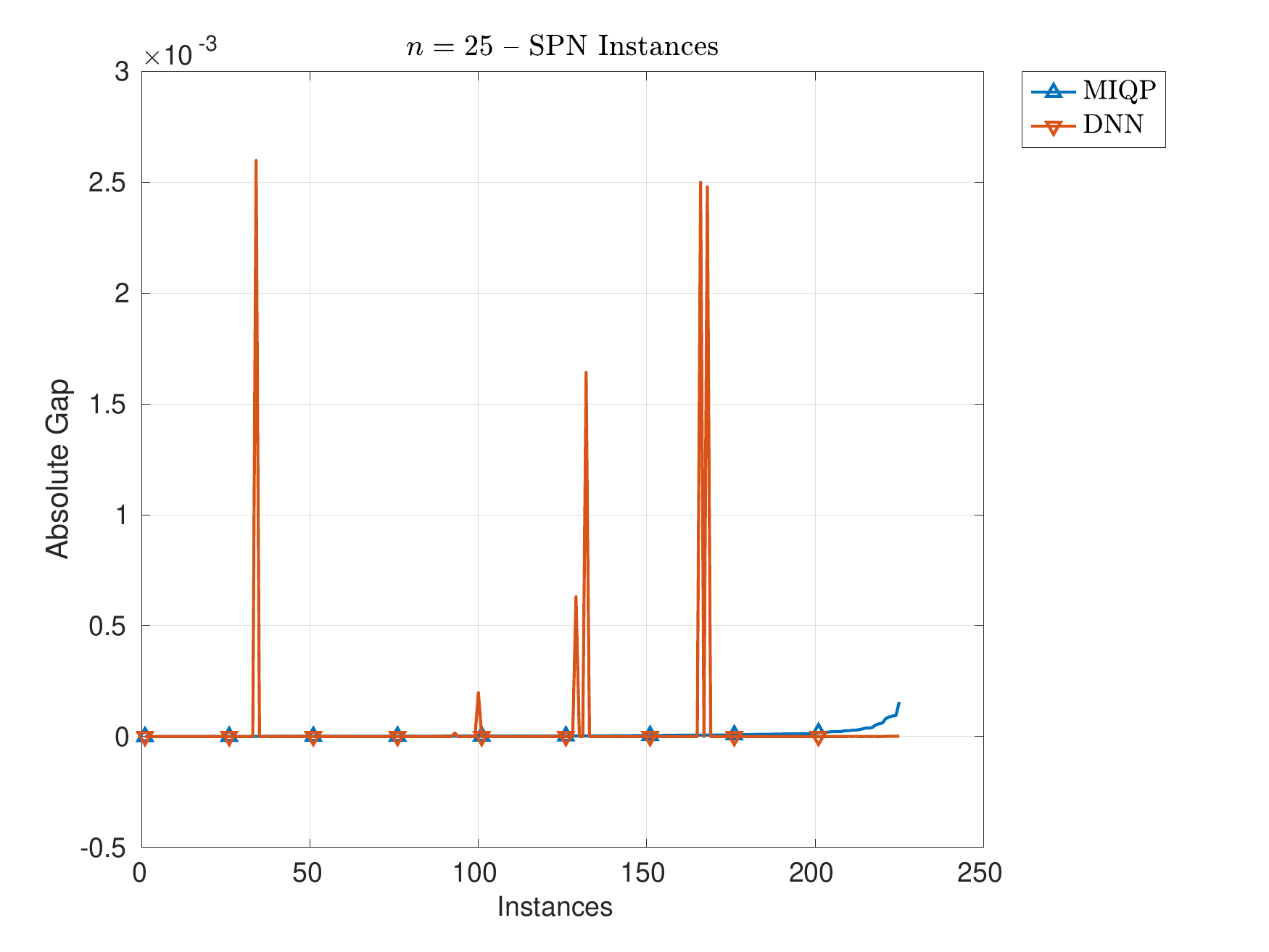}
		\caption{SPN Instances ($n = 25$)}
		\label{fig4c}
	\end{subfigure}
	\begin{subfigure}{0.495\linewidth}
		\includegraphics[width=\linewidth]{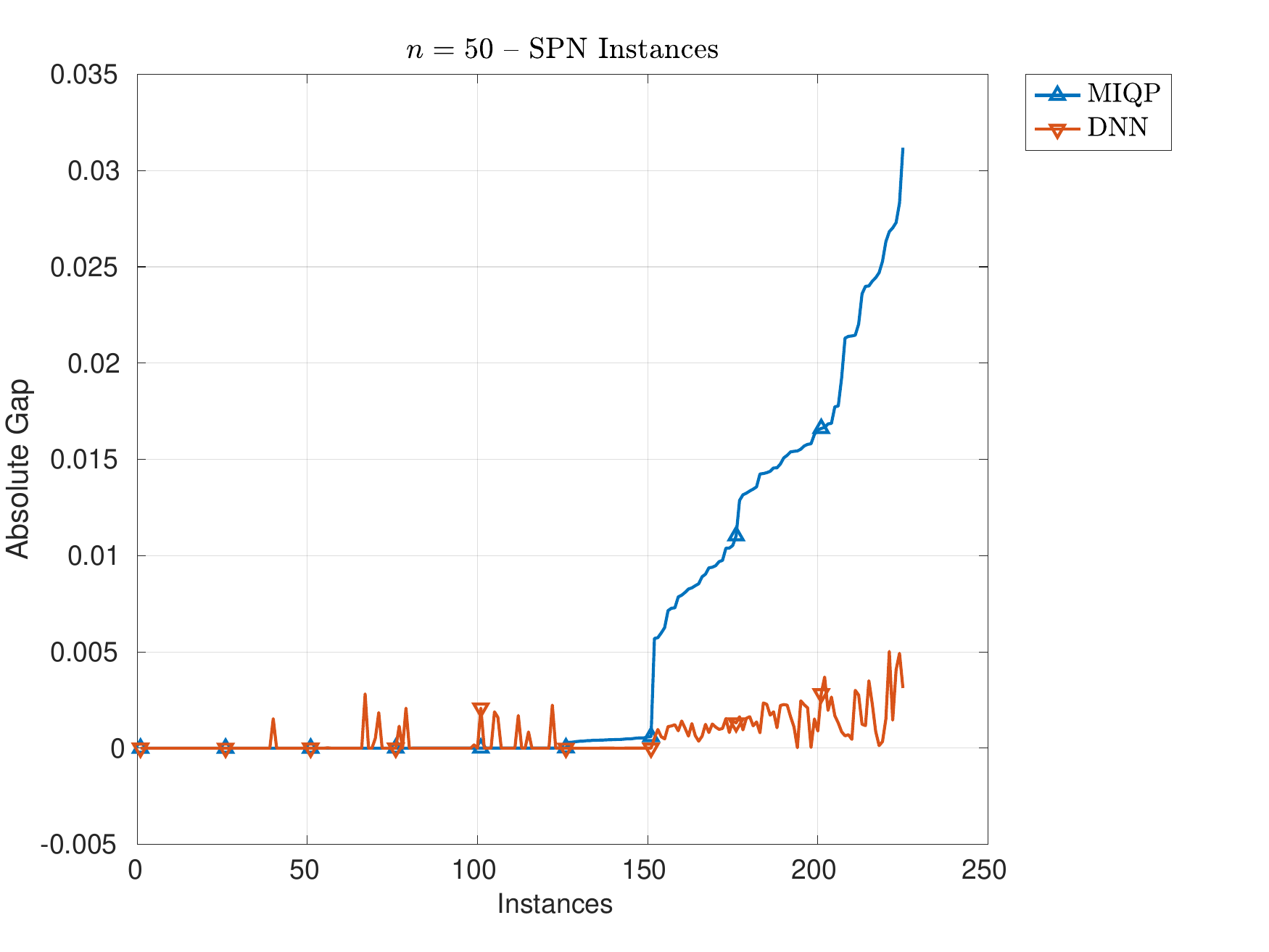}
		\caption{SPN Instances ($n = 50$)}
		\label{fig4d}
	\end{subfigure}
 \\
  \begin{subfigure}{0.495\linewidth}
		\includegraphics[width=\linewidth]{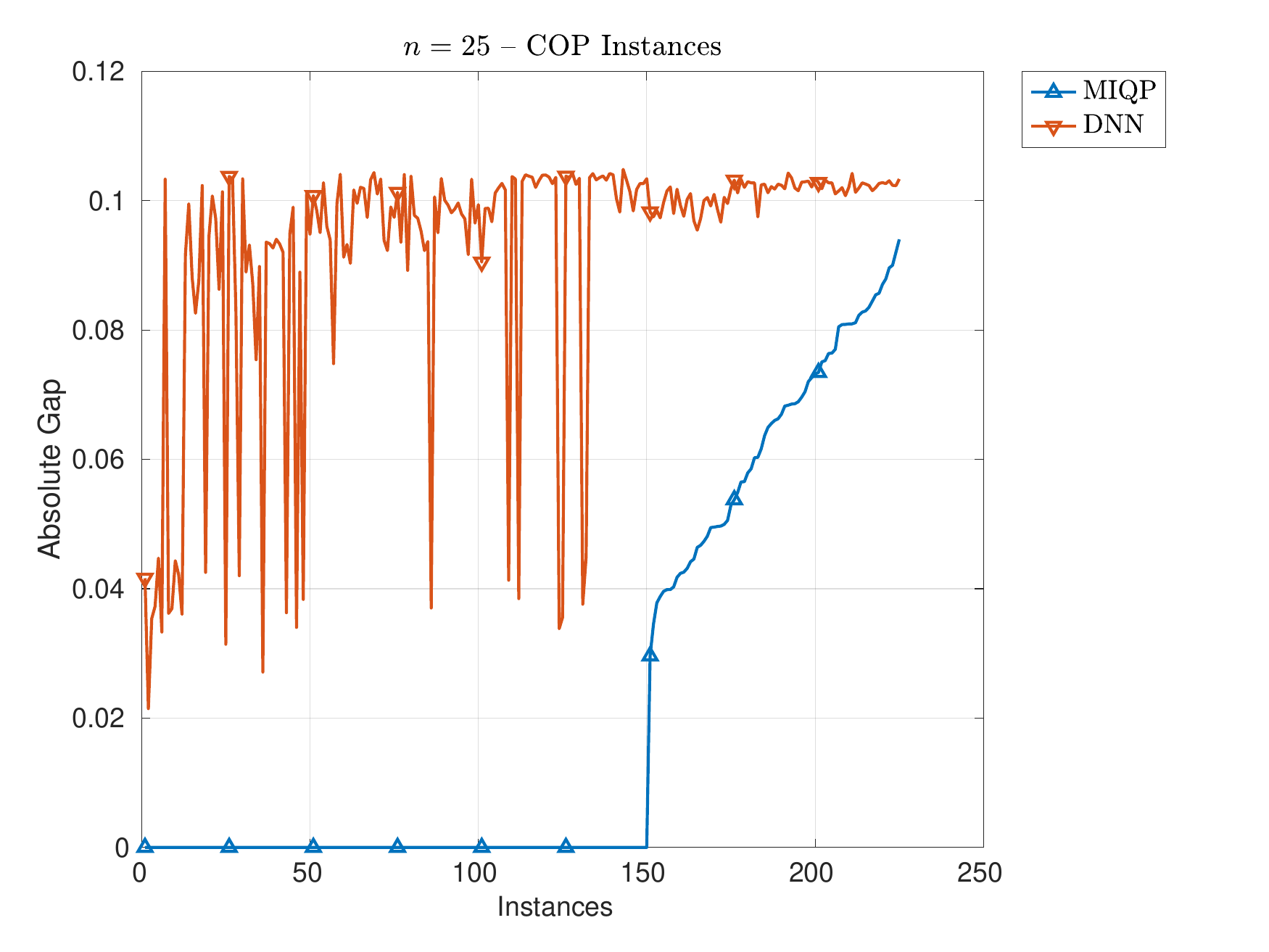}
		\caption{COP Instances ($n = 25$)}
		\label{fig4e}
	\end{subfigure}
	\begin{subfigure}{0.495\linewidth}
		\includegraphics[width=\linewidth]{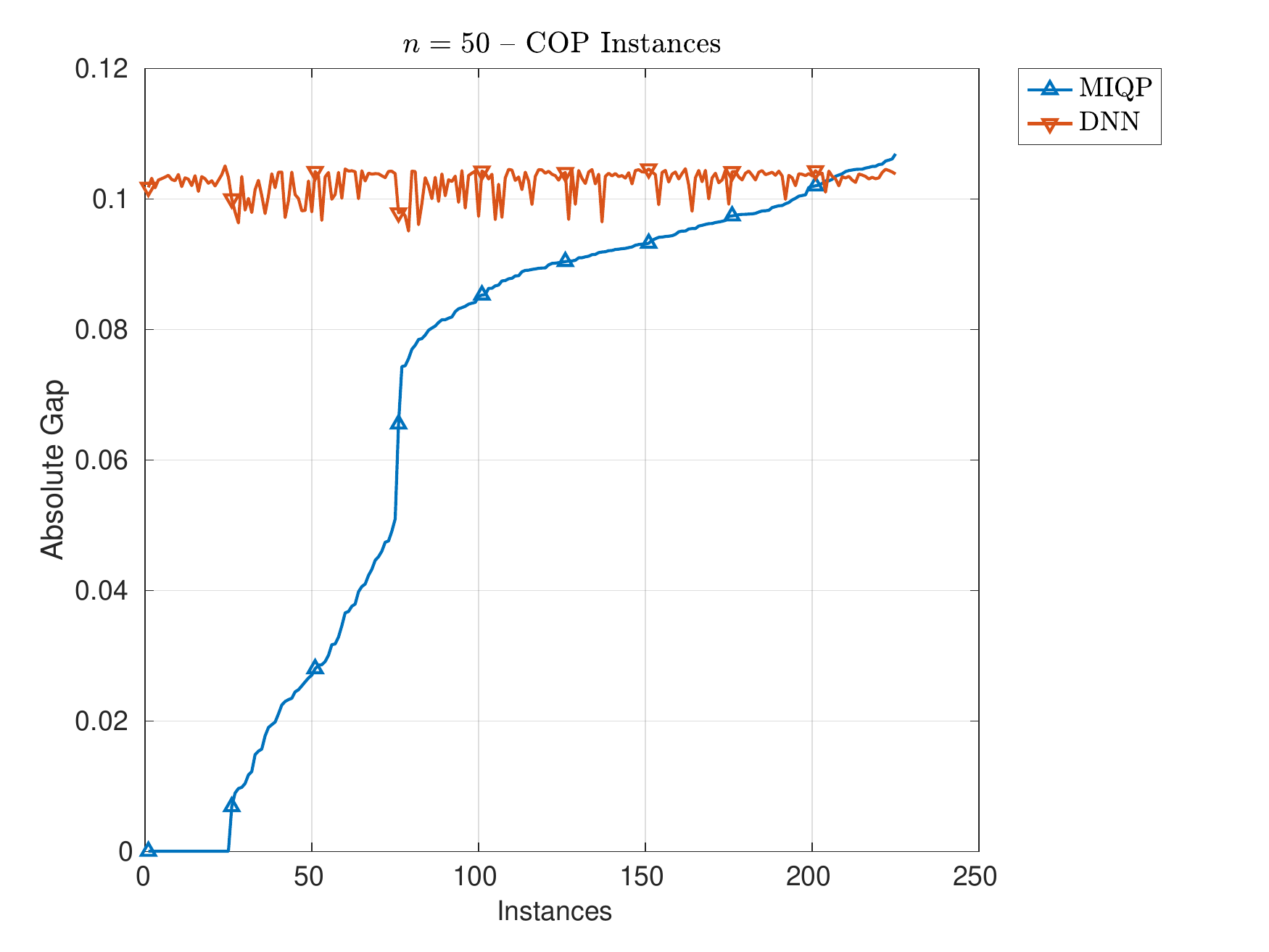}
		\caption{COP Instances ($n = 50$)}
		\label{fig4f}
	\end{subfigure}
    \caption{Absolute gaps of the best of  \eqref{P1} and \eqref{P2} versus the best of \eqref{D1B} and \eqref{D2B}}
    \label{Fig4-Gaps}
\end{figure}

\section{Discussion, conclusions and outlook}\label{concl}

In this paper, we studied the sparse StQP. By utilizing exact copositive reformulations of two mixed integer quadratic optimization models, we proposed two tractable convex relaxations. For both relaxations, we established equivalent tractable reformulations in significantly smaller dimensions. We also presented a theoretical comparison of the two relaxations. Finally, we proposed an instance generation scheme that is guaranteed to construct nontrivial instances. Our computational results clearly illustrate the computational advantages of our reduced relaxations as well as the quality of the relaxation bounds.

In the near future, we aim to focus on the description of the set of instances of \eqref{sStQP} that admit exact relaxations. Motivated by our computational results, we intend to investigate how our relaxations can be strengthened for hard instances such as COP instances.

\section*{Statements and Declarations}
\subsection*{Funding}
Research of Bo Peng supported by the doctoral programme Vienna Graduate School on Computational Optimization, FWF(Austrian Science Funding), Project W1260-N35.
\subsection*{Competing Interests}
The authors declare that they have no conflict of interest.
\subsection*{Author Contributions}
All authors contributed significantly to the study conception and design. The first draft of the manuscript was written with contributions from all authors. The experiments were designed and graphically represented
mostly by E. Alper Y{\i}ld{\i}r{\i}m. All authors commented on all previous versions of the manuscript, and have read and approved the final manuscript.

\subsection*{Data Availability}
The datasets generated and analyzed during the current study are available in the   \href{https://github.com/newfound21/Tractable_relaxations_for_sparse_StQP}{\texttt{github} repository}.

\subsection*{Code Availability}
The codes created during the current study are available in the \href{https://github.com/newfound21/Tractable_relaxations_for_sparse_StQP}{\texttt{github} repository}.

\bibliographystyle{abbrv}
\bibliography{references}

\end{document}